\title{Anderson Acceleration in Nonsmooth Problems: Local Convergence via Active Manifold Identification}
\author{	
	Kexin Li{\thanks{School of Information Science and Technology, ShanghaiTech University, Shanghai, People's Republic of China. Emails: \tt\small likx0403@gmail.com, bailw@shanghaitech.edu.cn}}
	\and
	Luwei Bai \footnotemark[1]
	\and 
	Xiao Wang {\thanks{Department of AI Computing, Peng Cheng Laboratory, Shenzhen, People's Republic of China. Emails: \tt\small wangx07@pcl.ac.cn}}  
	\and  
	Hao Wang {\thanks{Corresponding author. School of Information Science and Technology, ShanghaiTech University, Shanghai, People's Republic of China. Emails: \tt\small wanghao1@shanghaitech.edu.cn}}       
}
\begin{document}
\tikzstyle{block} = [rectangle, draw, fill=blue!20, 
text width=21em, text centered, rounded corners, minimum height=2em, align=flush left]
\tikzstyle{block1} = [rectangle, draw, fill=blue!20, 
text width=21em, text centered, rounded corners, minimum height=2em, align=flush left]
\tikzstyle{line} = [draw, -latex']
\tikzstyle{block2} = [rectangle, draw, fill=blue!20, 
text width=12em, text centered, rounded corners, minimum height=2.5em]
\tikzstyle{line} = [draw, -latex']
\maketitle
\begin{abstract}
	Anderson acceleration is an effective technique for enhancing the efficiency of fixed-point iterations; however, analyzing its convergence in nonsmooth settings presents significant challenges. In this paper, we investigate a class of nonsmooth optimization algorithms characterized by the active manifold identification property. This class includes a diverse array of methods such as the proximal point method, proximal gradient method, proximal linear method, proximal coordinate descent method, Douglas-Rachford splitting (or the alternating direction method of multipliers), and the iteratively reweighted $\ell_1$  method, among others. Under the assumption that the optimization problem possesses an active manifold at a stationary point, we establish a local R-linear convergence rate for the Anderson-accelerated algorithm. Our extensive numerical experiments further highlight the robust performance of the proposed Anderson-accelerated methods.
\end{abstract}

\section{Introduction}
In this paper, we focus on the following optimization problem:% % \vspace{-12pt}  % 手动减少公式上方间距
\begin{equation}\label{Nonsmooth optimization problem}
    \setlength{\abovedisplayskip}{8pt}   % 上方间距
    \setlength{\belowdisplayskip}{5pt}   % 下方间距
    \min\limits _{\bm{x}\in \mathbb{R}^n} F(\bm{x}),
\end{equation}
where, unless stated otherwise, $ F: \mathbb{R}^n \rightarrow \mathbb{R} \cup \{\infty\} $ is assumed to be a closed and level-bounded function, potentially nonsmooth and composed of both smooth and nonsmooth terms. Such problems commonly arise in various fields, including statistics, signal processing, and machine learning
%\cite{boyd2011distributed,peng2016coordinate}
. Depending on the specific characteristics of these problems, a diverse range of optimization algorithms has been developed. These include the proximal point algorithm, proximal gradient algorithm, proximal linear algorithm, and the Douglas-Rachford splitting algorithm, among others. A key feature of these algorithms is that their iterative schemes can often be reformulated as a fixed-point iteration for the following problem:
\begin{equation}\label{fixed-point iteration}
    \setlength{\abovedisplayskip}{8pt}   % 上方间距
    \setlength{\belowdisplayskip}{5pt}   % 下方间距
     \mbox{Find } \bm{x} \in \mathbb{R}^n \mbox{ such that } \bm{x} = H(\bm{x}),
\end{equation}
where $ H: \mathbb{R}^n \rightarrow \mathbb{R}^n $ is a continuous mapping. For convex optimization problems, the mapping $ H $ is typically nonexpansive, and the solution set of \eqref{fixed-point iteration} either coincides with or is closely related to the solution set of the original optimization problem.

Anderson acceleration is a widely recognized technique known for its effectiveness in enhancing the convergence of fixed-point problems, particularly in the later stages of iteration. By leveraging information from previous iterations, it combines weighted past iterations to generate a new one. Originally introduced to accelerate the iteration process of nonlinear integral equations, this method has since been extended to general fixed-point problems \cite{toth2015convergence}. Anderson acceleration has been extensively studied and applied, yielding promising results in practical computations.

Despite its notable efficiency and widespread application, the theoretical convergence analysis of Anderson acceleration lags significantly behind its practical use. Counterexamples indicate that global convergence may fail under relatively mild conditions. Many studies have focused on its local convergence and rate. However, proving local convergence in the absence of continuous differentiability is particularly challenging, even though Anderson acceleration does not require derivatives. Local R-linear convergence has been established for the linear mapping \cite{toth2015convergence}, the continuously differentiable mapping \cite{chen2019convergence}, and the Lipschitz continuously differentiable mapping \cite{toth2015convergence}.

For nonsmooth problem, contemporary research frequently incorporates strategies such as restarting and safeguarding steps to ensure convergence. Safeguarded Anderson-accelerated variants have been proposed for several algorithms, including the proximal gradient algorithm \cite{mai2020anderson}, coordinate descent algorithm \cite{bertrand2021anderson}, and Douglas-Rachford splitting algorithm \cite{fu2020anderson,ouyang2020anderson}. However, these globalized algorithms always fail to provide complexity guarantees. There is still a lack of theoretical analysis to explain the superior local performance of Anderson acceleration in nonsmooth optimization algorithms. To our knowledge, existing studies guarantee local linear convergence rates for Anderson acceleration algorithms only for certain specific nonsmooth problems. For example, Bian et al. \cite{bian2021anderson} study a fixed-point problem (with solution $\bm{x}^*$) decomposed into a smooth and a nonsmooth component with a small Lipschitz constant (where the constant approaches 0 as $\bm{x}\rightarrow \bm{x}^*$), demonstrating local linear convergence. Additionally, Bian and Chen \cite{bian2022anderson} consider a class of nonsmooth composite fixed-point problems $\bm{x} = H(\bm{x}):=(F\circ P_{\Omega}\circ Q)(\bm{x})$, where $F$ and $Q$ are Lipschitz continuously differentiable and $P_{\Omega}$ is a box set projection. They smooth $H$ and prove linear convergence for the Anderson-accelerated smoothing approximation. {Mai and Johansson \cite{mai2020anderson} propose an Anderson-accelerated proximal gradient (forward-backward splitting) algorithm with a local linear convergence rate guarantee. However, their method exclusively accelerates the forward step, the smooth part of the iteration—i.e., for a composite function \( F := f + g \) where \( f \) is smooth and \( g \) is nonsmooth. The Anderson acceleration is applied only to the forward step \( \bm{x}^{k+1/2} = \bm{x}^k - \nabla f(\bm{x}^k)/L \), while the backward step \( \bm{x}^{k+1} = \text{prox}_{\beta g}(\bm{x}^{k+1/2}) \), the nonsmooth part, remains unaccelerated.}

{\bf Contributions.} %Given the success of Anderson acceleration in expediting various nonsmooth optimization problems and the absence of a corresponding convergence rate analysis that theoretically substantiates it, this paper primarily contributes in the following ways.
In this paper, we present a general theoretical framework for analyzing the local convergence rates of Anderson acceleration applied to various nonsmooth optimization algorithms that possess the active manifold identification property. These algorithms include the proximal point algorithm (PPA), proximal gradient algorithm (PGA), proximal linear algorithm (PLA), proximal coordinate descent algorithm (PCD), Douglas-Rachford splitting algorithm (DRS), alternating direction method of multipliers (ADMM), and the iteratively reweighted $\ell_1$ algorithm (IRL1), among others, all of which tackle a wide range of nonsmooth optimization problems. We establish the local smoothness of iteration mappings for these algorithms, utilizing the corresponding theoretical framework. Additionally, we demonstrate local R-linear convergence rates for Anderson-accelerated versions of these algorithms. Figure \ref{Flow of theoretical analysis} illustrates the structure of our theoretical analysis, while Table \ref{Tab: comparison with existing work} summarizes the current progress and convergence results for the listed algorithms. Finally, we validate our theoretical findings and showcase the strong performance of Anderson-accelerated algorithms in our experiments.

\begin{table}[htbp]
    % \vspace{-0pt}
    \renewcommand\arraystretch{1}
    %\scalebox{1.5}{
      \begin{center}
    \begin{tabular}{@{}llllll@{}}
    %\begin{tabular}{@{}lll@{}}
    \hline
      & PPA/PLA  & PGA & PCD & DRS /ADMM & IRL1   \\
    \hline
     Global convergence  & -  &  \cite{mai2020anderson} &  \cite{bertrand2021anderson} &  \cite{fu2020anderson,ouyang2020anderson} & -\\
     Local convergence rate & Ours  & Ours,  \cite{mai2020anderson} & Ours & Ours & Ours  \\
    \hline
    \end{tabular}
  \end{center}
  % % \vspace{-7pt}
    \caption{Comparison with existing  Anderson accelerations in nonsmooth settings}\label{Tab: comparison with existing work}
\end{table}

\begin{figure}[htbp]
	% \vspace{-10pt}
	  \centering
	  % \hspace*{-27pt}
	  % % \vspace{0.2cm}
	% \begin{tikzpicture}[auto, transform canvas={scale=0.8}]
	\begin{tikzpicture}[auto]
		% nodes
		\node [block1] (init) { \scriptsize   {\bf Condition I}:  $F$ 
		admits an active manifold $\mathcal{M}$ at $\bm{x}^*$.};
		\node [block, below=0.15cm of init] (contact) {\scriptsize  {\bf Condition II}: A nonsmooth optimization algorithm identifies the active manifold: $H(\bm{x})\in \mathcal{M} \mbox{ for all } \bm{x} \mbox{ near } \bm{x}^* $.};
		\node [block2, right=5.2cm of  $(init)!0.5!(contact)$] (smooth) {\setlength{\baselineskip}{15em}\scriptsize   Local smoothness of $H$; \\ Local linear convergence rate of Anderson-accelerated algorithm.};
		% edges
		\coordinate (mid) at ($(smooth.west)+(-0.6cm,0)$);
		\draw [-, line width=1pt] (init.east) -| (mid);
		\draw [-, line width=1pt] (contact.east) -| (mid);
		\draw [->, >=stealth, line width=1pt] (mid) -- (smooth);
	\end{tikzpicture}
	% % \vspace{-15pt}
	  \caption{Flow of theoretical analysis to Anderson-accelerated nonsmooth optimization algorithms for \eqref{Nonsmooth optimization problem} with $\bm x^*$ being a Clarke critical point. % {I added an option on lines 153-155 to adjust the arrow size. The size can be scaled by adjusting the “line width”.}
	  } \label{Flow of theoretical analysis}
	  % \vspace{-13pt}
	\end{figure}
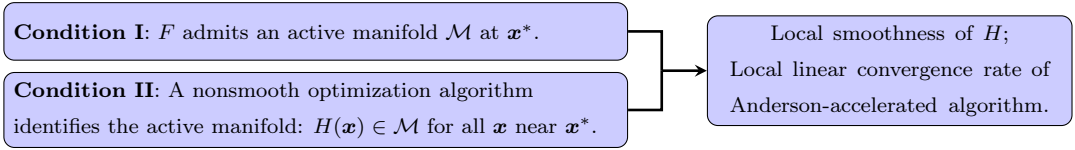

\section{Notation and preliminaries}
	% % \vspace{-8pt}
	We denote $\mathbb{N} := \{0,1,2,...\}$ and $\mathbb{R}_+^n$ as the nonnegative orthant in $\mathbb{R}^n$, with $\mathbb{R}_{++}^n$ representing its interior. The norm $\|\bm{x}\|_p = (\sum_{i=1}^{n}|x_i|^p)^{1/p}$ is defined for $p \in (0, +\infty)$, and unless otherwise specified, $\|\cdot\|$ refers to the $\ell_2$ norm. The function ${\rm sign}(\bm{x})$ returns the element-wise sign of $\bm{x}$, i.e., ${\rm sign}(\bm{x}) = [{\rm sign}(x_1), ..., {\rm sign}(x_n)]^T$, and ${\rm diag}(\bm{x}) \in \mathbb{R}^{n \times n}$ is the diagonal matrix with the elements of $\bm{x}$ on its diagonal. {Let \( \mathcal{B}_{\bm{x}}(\rho) \) and \( \hat{\mathcal{B}}_{\bm{x}}(\rho) \) denote the closed and open neighborhoods of \( \bm{x} \) with radius \( \rho > 0 \).} We use $\nabla f(\bm{x})$ to denote the gradient of $f$ at $\bm{x}$, and $\nabla_i f(\bm{x})$ to denote the partial derivative with respect to $x_i$, i.e., $\frac{\partial f(\bm{x})}{\partial x_i}$. A function $f:\mathbb{R}^n \rightarrow \mathbb{R} \cup \{\infty\}$ is called $\sigma$-weakly convex if the quadratically perturbed function $\bm{x} \mapsto f(\bm{x}) + \frac{\sigma}{2}\|\bm{x}\|^2$ is convex. Its {\it epigraph} is defined as $\mbox{epi} f := \{(\bm{x}, r) \in \mathbb{R}^n \times \mathbb{R} \mid r \ge f(\bm{x})\}$. For a set $\mathcal{M} \subseteq \mathbb{R}^n$, the indicator function $\chi_{\mathcal{M}}$ takes the value 0 in $\mathcal{M}$ and $+\infty$ otherwise. A mapping $H: \mathbb{R}^n \rightarrow \mathbb{R}^n$ is called a {\it contraction} if there exists a constant $\gamma \in (0,1)$ such that $\|H(\bm{x}) - H(\bm{y})\| \le \gamma \|\bm{x} - \bm{y}\|$ for all $\bm{x}, \bm{y} \in \mathbb{R}^n$. The Jacobian matrix of a mapping $G: \mathbb{R}^n \rightarrow \mathbb{R}^m$ is denoted by $\nabla G$, and $\mathbb{I}_d$ represents the identity mapping from $\mathbb{R}^d$ to $\mathbb{R}^d$. For a set $S$, the {\it relative interior} ${\rm rint}(S)$ is defined as:
	$$    
		\setlength{\abovedisplayskip}{8pt}   % 上方间距
		\setlength{\belowdisplayskip}{5pt}   % 下方间距
		{\rm rint}(S) := \{\bm{x} \in S : \text{there exists } \epsilon > 0 \text{ such that } \hat{\mathcal{B}}_{\bm{x}}(\epsilon) \cap {\rm aff}(S) \subseteq S \},
	$$
	where ${\rm aff}(S)$ is the affine hull of $S$. Given a function $f: \mathbb{R}^n \rightarrow \mathbb{R} \cup \{\infty\}$ and a matrix $C \in \mathbb{R}^{m \times n}$, the {\it image function} $(Cf): \mathbb{R}^m \rightarrow \mathbb{R} \cup \{\infty\}$ is defined as:
	$(Cf)(\bm{s}) := \inf_{\bm{x} \in \mathbb{R}^n} \{f(\bm{x}) \mid C\bm{x} = \bm{s}\}.$
	\subsection{Anderson acceleration}
	% % \vspace{-5pt}
	Consider the fixed-point problem \eqref{fixed-point iteration}, and let ${\bm{x}^*}$ be its solution, i.e., a fixed point of $H$. The framework for Anderson-accelerated fixed-point iterations is described in Algorithm \ref{AA fixed-point}.
	
	  \begin{algorithm}
		\caption{Anderson acceleration %accelerated fixed-point iteration
		}\label{AA fixed-point}
		\begin{algorithmic}[1]
		\STATE{Given $ \bm{x}^0 \in \mathbb{R}^n$ and integer $m \ge 1$.}
		\STATE{Set $k = 0$, $H^0 = H(\bm{x}^0)$, and $\bm{r}^0 = H^0 - \bm{x}^0$.}
		\WHILE{not convergent}
		 \STATE{Set $m_k = \min(m, k)$ and $R^k = [\bm{r}^k, ..., \bm{r}^{k-m_k}]$}
		 \STATE{Compute %$\bm{\alpha}^k \leftarrow \arg\min_{\bm{\alpha}^T\textbf{e}=1}\|R^k\bm{\alpha}\|^2$
		 $
			\bm{\alpha}^k = \arg\min_{\bm{\alpha}^T\textbf{e}=1}\|R^k\bm{\alpha}\|^2
	$
	}
		 \STATE{Compute $\bm{x}^{k+1} = \sum^{m_k}_{i=0} \alpha^k_i H^{k-m_k+i}$}
		 \STATE{Compute $H^{k+1} = H(\bm{x}^{k+1})$ and $\bm{r}^{k+1} = H^{k+1} - \bm{x}^{k+1}$}
		 \STATE{Set $k = k+1$}
		\ENDWHILE
		\RETURN {$\bm{x}^{k}$} 
		\end{algorithmic}
		\end{algorithm}

In Algorithm \ref{AA fixed-point}, we define $H^k = H(\bm{x}^k)$, $\bm{r}^k = H(\bm{x}^k) - \bm{x}^k$ as the residue, and $R^k = [\bm{r}^k, ..., \bm{r}^{k-m_k}]$ for $k \geq 0$. Starting with an initial point $\bm{x}^0$ and an integer $m \geq 1$, the key step in Anderson acceleration (Step 5) involves calculating the weight vector $\bm{\alpha}^k \in \mathbb{R}^{m_k+1}$ that minimizes the weighted sum of the previous $m_k+1$ residues.
%\begin{equation}
%    \begin{aligned}\label{AA subproblem}
 %       \bm{\alpha}^k = \arg\min_{\bm{\alpha}^T\textbf{e}=1}\|R^k\bm{\alpha}\|^2.
%\end{aligned}
%\end{equation}
%The next iterate $\bm{x}^{k+1}$ is then obtained in Step 6 by using the weight vector $\bm{\alpha}^k$ to compute the weighted sum of $H^{k},...,H^{k-m_k}$. 
%\begin{equation}
%    \begin{aligned}
 %       \bm{x}^{k+1} = \sum^{m_k}_{i=0}\alpha^k_i H^{k-m_k+i}.
%\end{aligned}
%\end{equation}
It is straightforward to see that $\bm{\alpha}^k$ can be explicitly expressed as \cite{mai2020anderson}: $\bm{\alpha}^k = \frac{[(R^k)^T R^k]^{-1} \boldsymbol{1}}{\boldsymbol{1}^T [(R^k)^T R^k]^{-1} \boldsymbol{1}}.$ The computational cost of Step 5 is $O(m_k^2 + nm_k)$. Given that $m_k$ is typically a small integer in practice, such as $5-15$, solving this subproblem incurs minimal computation. Therefore, Anderson acceleration does not significantly increase the computational burden. Furthermore, if $(R^k)^T R^k$ is singular, we can ensure the subproblem’s non-singularity by adding a Tikhonov regularization term {$\tau \|\bm{\alpha}\|^2$} to the objective function \cite{mai2020anderson}, where $\tau > 0$ is a small constant.

In this paper, we make the following assumption regarding the fixed-point mapping $H$ and the coefficients generated by Anderson acceleration.

\begin{assumption} \label{Assumption AA} 
	Let $\bm{x}^*$ be the solution of the fixed-point problem $\bm{x} = H(\bm{x})$ and let $\{\bm{\alpha}^k\}$ denote the sequence generated by Anderson acceleration. 
	\begin{itemize}
	\item[(i)] There exist constants $\rho, \gamma > 0$ such that 
	$\|H(\bm{x}) - H(\bm{y})\| \leq \gamma \|\bm{x} - \bm{y}\|$ for all $\bm{x}, \bm{y} \in \mathcal{B}_{\bm{x}^*}(\rho)$
	\item[(ii)] There exists an upper bound $M_{\bm{\alpha}}$ such that $\sum_{i=0}^{m_k} |\alpha^k_i| \leq M_{\bm{\alpha}}$ for all $k \in \mathbb{N}$.
	\end{itemize}
\end{assumption}

These assumptions are commonly used in the literature on Anderson acceleration \cite{bian2022anderson,bian2021anderson,toth2015convergence}. Assumption \ref{Assumption AA}(ii) ensures the boundedness of $\sum_{i=0}^{m_k} |\alpha^k_i|$. In our experiments, we also observed this phenomenon. Although proving the boundedness remains elusive, several practical approaches have been proposed to ensure it \cite{scieur2016regularized,toth2015convergence}. 

The first convergence result for Anderson acceleration is presented in \cite[Theorem 2.3]{toth2015convergence}. Under the assumption of Lipschitz continuous differentiability of \( H \), Anderson acceleration guarantees R-linear convergence when initialized near the fixed point \( \bm{x}^* \).
    
% The theoretical analysis of  \cite[Theorem 2.3]{toth2015convergence} requires the continuous differentiability of $H$. However, in the context of problem \eqref{equation1}, the mapping defined in (\ref{H}) struggles to meet this requirement. Notably, Wang et al.  \cite{wang2021relating} suggested that IRL1 with $\ell_p$ norm is equivalent to gradient descent at the tail end of the algorithm in a smooth subspace. Inspired by this, we establish a local continuous differentiability of $H$.

\begin{theorem} \label{Convergence AA} 
Under Assumption \ref{Assumption AA}, suppose that $H$ is Lipschitz continuously differentiable in a neighborhood $\mathcal{B}_{\bm{x}^*}(\hat{\rho})$ of $\bm{x}^*$ for some $0 < \hat{\rho} \le \rho$ and $\bm{x}^0 \in \mathcal{B}_{\bm{x}^*}(\hat{\rho})$. Then, when $\bm{x}^0$ is sufficiently close to $\bm{x}^*$, the iterates $\{\bm{x}^k\}$ generated by Anderson acceleration remain in $\mathcal{B}_{\bm{x}^*}(\hat{\rho})$ and converge to $\bm{x}^*$ R-linearly with $\hat{\gamma} \in (\gamma, 1)$:\begin{equation}\label{cov-rate}    
    \setlength{\abovedisplayskip}{8pt}   % 上方间距
    \setlength{\belowdisplayskip}{5pt}   % 下方间距
        \|H^k - \bm{x}^k\| \leq \hat{\gamma}^k \|H^0 - \bm{x}^0\| \quad \text{and} \quad 
        \|\bm{x}^k - \bm{x}^*\| \leq \frac{1 + \gamma}{1 - \gamma} \hat{\gamma}^k \|\bm{x}^0 - \bm{x}^*\|.
\end{equation}
\end{theorem}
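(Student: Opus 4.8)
The plan is to first reduce the statement to the residual estimate $\|\bm{r}^k\| \le \hat{\gamma}^k\|\bm{r}^0\|$ (the left inequality in \eqref{cov-rate}, with $\bm{r}^k = H^k - \bm{x}^k$), since the bound on $\|\bm{x}^k - \bm{x}^*\|$ then follows by elementary manipulations. Indeed, writing $\bm{x}^k - \bm{x}^* = -\bm{r}^k + \bigl(H(\bm{x}^k) - H(\bm{x}^*)\bigr)$ and invoking Assumption~\ref{Assumption AA}(i) gives $(1-\gamma)\|\bm{x}^k - \bm{x}^*\| \le \|\bm{r}^k\|$, while the same Lipschitz bound applied at $\bm{x}^0$ yields $\|\bm{r}^0\| \le (1+\gamma)\|\bm{x}^0 - \bm{x}^*\|$. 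Chaining these with the residual estimate produces $\|\bm{x}^k - \bm{x}^*\| \le \frac{1}{1-\gamma}\hat{\gamma}^k\|\bm{r}^0\| \le \frac{1+\gamma}{1-\gamma}\hat{\gamma}^k\|\bm{x}^0 - \bm{x}^*\|$, exactly the right inequality. Everything therefore rests on the residual bound.

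Next I would set up the one-step residual recursion. Let $\bar{\bm{r}}^k := R^k\bm{\alpha}^k = \sum_{i=0}^{m_k}\alpha_i^k\bm{r}^{k-m_k+i}$ be the optimally combined residual. Since selecting only the most recent residual $\bm{r}^k$ is feasible for the subproblem in Step~5, optimality immediately gives $\|\bar{\bm{r}}^k\| \le \|\bm{r}^k\|$. The core observation is that if $H$ were affine, say $H(\bm{x}) = A\bm{x} + \bm{b}$ with $\|A\| \le \gamma$, then a direct computation using $\bm{x}^{k+1} = \sum_{i=0}^{m_k} \alpha_i^k H^{k-m_k+i}$ and $\sum_{i=0}^{m_k} \alpha_i^k = 1$ shows $\bm{r}^{k+1} = A\,\bar{\bm{r}}^k$, whence $\|\bm{r}^{k+1}\| \le \gamma\|\bar{\bm{r}}^k\| \le \gamma\|\bm{r}^k\|$. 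For the general smooth $H$, I would expand $H$ to first order about $\bm{x}^*$ (taking $A = \nabla H(\bm{x}^*)$, which satisfies $\|A\| \le \gamma$ by Assumption~\ref{Assumption AA}(i)) and use the Lipschitz continuity of $\nabla H$ on $\mathcal{B}_{\bm{x}^*}(\hat{\rho})$ to control the remainder. This produces the perturbed recursion
\begin{equation*}
    \|\bm{r}^{k+1}\| \le \gamma\|\bm{r}^k\| + C\Big(\max_{0\le i\le m_k}\|\bm{r}^{k-m_k+i}\|\Big)^2,
\end{equation*}
where the constant $C$ collects the Jacobian Lipschitz modulus and the coefficient bound $M_{\bm{\alpha}}$ from Assumption~\ref{Assumption AA}(ii); the error is quadratic precisely because the affine identity is exact and only second-order terms survive.

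With the perturbed recursion in hand, I would fix $\hat{\gamma} \in (\gamma,1)$ and prove $\|\bm{r}^k\| \le \hat{\gamma}^k\|\bm{r}^0\|$ by induction, carried out jointly with the containment claim $\bm{x}^k \in \mathcal{B}_{\bm{x}^*}(\hat{\rho})$ so that the smoothness estimates remain applicable. Assuming the bound through index $k$, each residual appearing in the error satisfies $\|\bm{r}^{k-m_k+i}\| \le \hat{\gamma}^{k-m}\|\bm{r}^0\| = \hat{\gamma}^{-m}\hat{\gamma}^k\|\bm{r}^0\|$, so the quadratic term is at most $C\hat{\gamma}^{-2m}\hat{\gamma}^{2k}\|\bm{r}^0\|^2$. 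Dividing the recursion by $\hat{\gamma}^k\|\bm{r}^0\|$ and using $\hat{\gamma}^k \le 1$ reduces the inductive step to $\gamma + C\hat{\gamma}^{-2m}\|\bm{r}^0\| \le \hat{\gamma}$, which holds once $\bm{x}^0$ is taken close enough to $\bm{x}^*$ that $\|\bm{r}^0\|$ lies below the threshold $(\hat{\gamma}-\gamma)\hat{\gamma}^{2m}/C$. Containment is maintained by combining a preliminary estimate $\|\bm{x}^{k+1} - \bm{x}^*\| \le \gamma M_{\bm{\alpha}}\max_j\|\bm{x}^j - \bm{x}^*\|$ (via $\sum_{i=0}^{m_k}|\alpha_i^k| \le M_{\bm{\alpha}}$), which confirms that the freshly formed $\bm{x}^{k+1}$ does not leave $\mathcal{B}_{\bm{x}^*}(\hat{\rho})$, with the tighter bound $\|\bm{x}^{k+1} - \bm{x}^*\| \le \frac{1}{1-\gamma}\|\bm{r}^{k+1}\|$ from the first paragraph.

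I expect the main obstacle to be the bookkeeping in this last step: one must control the quadratic remainder \emph{uniformly} in $k$ while simultaneously guaranteeing that every iterate, and in particular each newly combined point $\bm{x}^{k+1}$, stays inside the neighborhood on which $\nabla H$ is Lipschitz. These two requirements are coupled—the residual bound drives containment, yet containment is what legitimizes the residual recursion—so the induction must thread both at once, and it is precisely here that the smallness of $\|\bm{r}^0\|$ and the coefficient bound $M_{\bm{\alpha}}$ are indispensable. By contrast, the affine identity $\bm{r}^{k+1} = A\bar{\bm{r}}^k$ and the feasibility bound $\|\bar{\bm{r}}^k\| \le \|\bm{r}^k\|$ are comparatively routine; the delicate part is ensuring the perturbative argument never exits its domain of validity.
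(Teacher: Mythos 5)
Note that the paper does not prove Theorem \ref{Convergence AA} at all: it is imported verbatim from \cite[Theorem 2.3]{toth2015convergence}, so there is no in-paper argument to compare against. Your sketch is a correct reconstruction of exactly that standard Toth--Kelley argument---reduction to the residual bound via $(1-\gamma)\|\bm{x}^k-\bm{x}^*\|\le\|\bm{r}^k\|$, the optimality inequality $\|R^k\bm{\alpha}^k\|\le\|\bm{r}^k\|$, linearization at $\bm{x}^*$ with a quadratic remainder controlled by the Lipschitz constant of $\nabla H$ and $M_{\bm{\alpha}}$, and an induction that couples geometric residual decay with containment in $\mathcal{B}_{\bm{x}^*}(\hat{\rho})$---so it matches the approach the paper relies on.
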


\subsection{Subdifferentials}
% \vspace{-8pt}
The following concepts about subdifferentials are well-established in the literature, such as \cite{davis2021subgradient}. The regular (Fréchet) normal cone to a set $Q \subset \mathbb{R}^n$ at a point $\bm{x} \in \mathbb{R}^n$ is defined as follows:
$$
    \setlength\abovedisplayskip{1pt} 
    \setlength\belowdisplayskip{1pt}
    N_Q(\bm{x}) := \left\{\bm{\nu} \in \mathbb{R}^n : \mathop{\lim\sup}_{\bm{y} \rightarrow \bm{x}, \bm{y} \in Q}  \frac{\langle \bm{\nu}, \bm{y} - \bm{x} \rangle}{\|\bm{y} - \bm{x}\|} \le 0 \right\}.
$$
The limiting normal cone to $Q$ at $\bm{x} \in Q$, denoted by $\hat{N}_Q(\bm{x})$, is defined to consist of all vectors $\bm{\nu} \in \mathbb{R}^n$ for which there exist sequences $\bm{x}_i \in Q$ and $\bm{\nu}_i \in \hat{N}_Q(\bm{x}_i)$ satisfying $(\bm{x}_i, \bm{\nu}_i) \rightarrow (\bm{x}, \bm{\nu})$. The Clarke normal cone, denoted by $N^c_Q(\bm{x})$, is defined as the closed convex hull of $\hat{N}_Q(\bm{x})$. For all points $\bm{x} \in Q$, it holds that $N_Q(\bm{x}) \subset \hat{N}_Q(\bm{x}) \subset N^c_Q(\bm{x})$. Generalized gradients of a function can be defined through the normal cones to epigraphs. Specifically, consider a function $f: \mathbb{R}^n \rightarrow \mathbb{R} \cup \{\infty\}$ and a point $\bm{x}$ such that $f(\bm{x})$ is finite. The subderivative $df(\bm{x})(\cdot)$  is defined by
$$
    \setlength{\abovedisplayskip}{8pt}   % 上方间距
    \setlength{\belowdisplayskip}{5pt}   % 下方间距
    df(\bm{x})(\bm{\overline{\nu}}) = \lim_{\tau \downarrow 0} \inf_{\bm{\nu} \rightarrow \bm{\overline{\nu}}} \frac{f(\bm{x} + \tau \bm{\nu}) - f(\bm{x})}{\tau}, \quad \bm{\nu} \in \mathbb{R}^n.
$$
The regular and Clarke subdifferentials of $f$ at $\bm{x}$ are respectively defined as
$$
    \setlength{\abovedisplayskip}{8pt}   % 上方间距
    \setlength{\belowdisplayskip}{5pt}   % 下方间距
    \begin{aligned}
    \partial f(\bm{x}) &:= \{\bm{\nu} \in \mathbb{R}^n : (\bm{\nu}, -1) \in N_{\mbox{epi} f}(\bm{x}, f(\bm{x}))\}, \\
    \partial_c f(\bm{x}) &:= \{\bm{\nu} \in \mathbb{R}^n : (\bm{\nu}, -1) \in N^c_{\mbox{epi} f}(\bm{x}, f(\bm{x}))\}.
\end{aligned}
$$
A point $\overline{\bm{x}}$ satisfying $\bm{0} \in \partial f(\bm{x})$ is called a {\it critical point} of $f$, and a point satisfying $\bm{0} \in \partial_c f(\bm{x})$ is called a {\it Clarke critical point}. Notably, the latter requirement is much weaker than the former. The distinction disappears for weakly convex functions \cite{davis2021subgradient}. We say that $f$ is (subdifferentially) regular at $\bm{x}$ if $\mbox{epi} f$ is locally closed around $\bm{x}$ and $N_Q(\bm{x}) = N^c_Q(\bm{x})$.

\subsection{Weak convexity and Moreau envelope}
For a $\sigma$-weakly convex function $f: \mathbb{R}^n \rightarrow \mathbb{R} \cup \{\infty\}$, it naturally has a lower bound:
$$
    \setlength{\abovedisplayskip}{8pt}   % 上方间距
    \setlength{\belowdisplayskip}{5pt}   % 下方间距
    f(\bm{y}) \ge f(\bm{x}) + \langle \bm{\nu}, \bm{y} - \bm{x} \rangle - \frac{\sigma}{2} \|\bm{y} - \bm{x}\|^2, \quad \forall \bm{x}, \bm{y} \in \mathbb{R}^n, \bm{\nu} \in \partial f(\bm{x}).
$$
The Moreau envelope and proximal point mapping of $f$ are defined as follows:
$$
    \setlength{\abovedisplayskip}{8pt}   % 上方间距
    \setlength{\belowdisplayskip}{5pt}   % 下方间距
    \begin{aligned}
            f_{\beta}(\bm{x}) &= \inf_{\bm{y} \in \mathbb{R}^n} \left\{ f(\bm{y}) + \frac{1}{2\beta} \|\bm{y} - \bm{x}\|^2 \right\}, \\
    {\rm prox}_{\beta f}(\bm{x}) &= \mathop{\arg\min}_{\bm{y} \in \mathbb{R}^n} \left\{ f(\bm{y}) + \frac{1}{2\beta} \|\bm{y} - \bm{x}\|^2 \right\}.
    \end{aligned}
$$
In the following lemma, we summarize the basic properties that are used in this paper. More properties can be found in \cite[Lemma 2.5]{davis2022proximal}.

%\red{More properties should be cited. For instance, it has shown the Lipschitz continuity of $\nabla f_\beta$. Can be applied to prove the Lipschitz continuity of $\nabla H$ in next sections? In  \cite[Theorem 3.1]{davis2022proximal}, Moreau Envelope $f_\beta$ is $C^2$-smooth. Is it helpful to derive the $C^3$-smoothness? If so, $\nabla^3 f_\beta$ is bounded near $x^*$, then by $\nabla f_\beta(x) = \beta^{-1}(x-\mbox{prox}_{\beta f}(x))$ we can obtain $\nabla^2 \mbox{prox}_{\beta f}(x)$ is bounded, then $\nabla H$ is Lipschitz continuously differentiable.}
\begin{lemma}
\label{Moreau envelope and the proximal point mapping}
    Let $f : \mathbb{R}^d \to \mathbb{R} \cup \{\infty\}$ be a $\sigma$-weakly convex function, and fix a positive parameter $\beta < \sigma^{-1}$. The following statements hold:
    \begin{enumerate}
        \item [{\rm (i)}] The Moreau envelope $f_\beta$ is $\mathcal{C}^1$-smooth and $\frac{\sigma}{1 - \beta\sigma}$-weakly convex.
        \item [{\rm (ii)}] The proximal mapping $\mathrm{prox}_{\beta f} $ is $\frac{1}{1 - \beta\sigma}$-Lipschitz continuous.
        \item [{\rm (iii)}] The critical points of $f$ and $f_\beta$ coincide, and they are exactly the fixed points of the proximal mapping $\mathrm{prox}_{\beta f}$.
    \end{enumerate}
\end{lemma}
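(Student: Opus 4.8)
The plan is to reduce everything to the well-understood convex case by convexification. Since $f$ is $\sigma$-weakly convex, the function $g(\bm{y}) := f(\bm{y}) + \frac{\sigma}{2}\|\bm{y}\|^2$ is convex, closed, and proper. First I would substitute $f = g - \frac{\sigma}{2}\|\cdot\|^2$ into the definition of $f_\beta$ and complete the square in the minimization variable $\bm{y}$. Because $\beta < \sigma^{-1}$, the coefficient $\frac{1}{\beta} - \sigma = \frac{1-\beta\sigma}{\beta}$ of the resulting quadratic in $\bm{y}$ is strictly positive, so the objective is strongly convex; this guarantees a unique minimizer and lets me recast the problem as a Moreau envelope of the convex function $g$ with rescaled parameter $\tilde\beta := \frac{\beta}{1-\beta\sigma}$ evaluated at the rescaled point $\tilde{\bm{x}} := \frac{\bm{x}}{1-\beta\sigma}$. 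The key identities I would establish are
\begin{equation*}
    f_\beta(\bm{x}) = g_{\tilde\beta}(\tilde{\bm{x}}) - \frac{\sigma}{2(1-\beta\sigma)}\|\bm{x}\|^2, \qquad \mathrm{prox}_{\beta f}(\bm{x}) = \mathrm{prox}_{\tilde\beta g}(\tilde{\bm{x}}),
\end{equation*}
since completing the square leaves the minimizer unchanged and only adds an $\bm{x}$-dependent constant.

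With these identities in hand, each claim follows from standard convex analysis. For (i), the Moreau envelope $g_{\tilde\beta}$ of the convex function $g$ is $\mathcal{C}^1$-smooth, so $f_\beta$ is $\mathcal{C}^1$ as the sum of a $\mathcal{C}^1$ function (composed with the linear map $\bm{x}\mapsto\tilde{\bm{x}}$) and a smooth quadratic; moreover $g_{\tilde\beta}$ is convex, and since the first identity rearranges to $f_\beta(\bm{x}) + \frac{\sigma}{2(1-\beta\sigma)}\|\bm{x}\|^2 = g_{\tilde\beta}(\tilde{\bm{x}})$, the left-hand side is convex in $\bm{x}$, which is exactly the assertion that $f_\beta$ is $\frac{\sigma}{1-\beta\sigma}$-weakly convex. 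For (ii), the proximal mapping of the convex $g$ is nonexpansive ($1$-Lipschitz), so the second identity together with the scaling $\tilde{\bm{x}} = \bm{x}/(1-\beta\sigma)$ yields the Lipschitz constant $\frac{1}{1-\beta\sigma}$.

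For (iii), I would differentiate the two identities to obtain the gradient formula $\nabla f_\beta(\bm{x}) = \frac{1}{\beta}\bigl(\bm{x} - \mathrm{prox}_{\beta f}(\bm{x})\bigr)$. Then $\nabla f_\beta(\bm{x}) = \bm{0}$ if and only if $\bm{x} = \mathrm{prox}_{\beta f}(\bm{x})$, so the critical points of the smooth function $f_\beta$ coincide with the fixed points of $\mathrm{prox}_{\beta f}$. To tie these to critical points of $f$, I would write the first-order optimality condition for the strongly convex proximal subproblem at a fixed point: $\bm{x} = \mathrm{prox}_{\beta f}(\bm{x})$ holds precisely when $\bm{0} \in \partial f(\bm{x})$, i.e.\ $\bm{x}$ is a critical point of $f$.

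The main obstacle I anticipate is the bookkeeping in the completing-the-square step and the careful tracking of the rescaled quantities $\tilde\beta$ and $\tilde{\bm{x}}$, so that the weak-convexity modulus comes out to exactly $\frac{\sigma}{1-\beta\sigma}$ and the Lipschitz constant to exactly $\frac{1}{1-\beta\sigma}$ rather than looser bounds. Some care is also needed to justify the subdifferential sum rule $\partial f(\bm{x}) = \partial g(\bm{x}) - \sigma\bm{x}$ used in the optimality condition, though this is standard since $\frac{\sigma}{2}\|\cdot\|^2$ is smooth.
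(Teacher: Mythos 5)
Your proposal is correct, but note that the paper never proves this lemma at all: it is stated as a preliminary and attributed to the literature (Lemma~2.5 of the cited work of Davis and Drusvyatskiy, reference \cite{davis2022proximal}), so there is no in-paper argument to compare against. Your convexification argument is the standard route to these facts and is essentially self-contained. The key identities you propose do check out: writing $g = f + \frac{\sigma}{2}\|\cdot\|^2$, completing the square with the positive coefficient $\frac{1}{\beta}-\sigma = \frac{1-\beta\sigma}{\beta}$ gives
\begin{equation*}
f_\beta(\bm{x}) = g_{\tilde\beta}\!\left(\tfrac{\bm{x}}{1-\beta\sigma}\right) - \frac{\sigma}{2(1-\beta\sigma)}\|\bm{x}\|^2, \qquad \mathrm{prox}_{\beta f}(\bm{x}) = \mathrm{prox}_{\tilde\beta g}\!\left(\tfrac{\bm{x}}{1-\beta\sigma}\right), \qquad \tilde\beta = \frac{\beta}{1-\beta\sigma},
\end{equation*}
and a direct computation confirms the residual quadratic coefficient is exactly $-\frac{\sigma}{2(1-\beta\sigma)}$, so the weak-convexity modulus $\frac{\sigma}{1-\beta\sigma}$ and the Lipschitz constant $\frac{1}{1-\beta\sigma}$ (nonexpansiveness of the convex prox composed with the scaling $\bm{x}\mapsto\bm{x}/(1-\beta\sigma)$) come out sharp, not loose. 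Your part (iii) is also sound: the chain rule applied to your identities recovers $\nabla f_\beta(\bm{x}) = \frac{1}{\beta}(\bm{x}-\mathrm{prox}_{\beta f}(\bm{x}))$, and both directions of the equivalence with $\bm{0}\in\partial f(\bm{x})$ are justified because, for $\beta<\sigma^{-1}$, the proximal subproblem is strongly convex, so its first-order condition characterizes its unique global minimizer; the sum rule you invoke is valid since the added quadratic is smooth. The only presentational caveat is that well-definedness and single-valuedness of $\mathrm{prox}_{\beta f}$ (needed before stating the identities) should be recorded explicitly as a consequence of this strong convexity together with closedness of $f$, which you do implicitly when you note the objective is strongly convex.
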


\subsection{Manifold optimization}
Nonsmooth optimization problems often exhibit significant structure, with many studies demonstrating that the nonsmooth behavior of their objective functions is closely related to an active manifold in the local region \cite{hare2007identifying,lewis2002active}. Specifically, the critical points of typical nonsmooth functions lie on a manifold, where the functions are smooth. This relationship allows us to explore the local smoothness properties of nonsmooth optimization problems through the lens of manifold optimization. The concept of an active manifold is closely tied to the active set. From an algorithmic perspective, active manifolds are sets that typical algorithms can identify in finite time. The notion of active manifolds has been modeled in various ways, such as partly smooth manifolds \cite{lewis2002active}, $\mathcal{UV}$-structures \cite{mifflin2005algorithm}, $g \circ F$ decomposable functions \cite{shapiro2003class}, and minimal identifiable sets \cite{drusvyatskiy2014optimality}. We first revisit the definition of a smooth manifold \cite[Definition 2.2]{davis2022proximal}.

\begin{definition}[Smooth manifold]\label{def:2.2}
    A subset $\mathcal{M} \subset \mathbb{R}^n$ is called a $\mathcal{C}^p$-smooth manifold (for $p \geq 1$) of dimension $r$ around $\overline{\bm{x}} \in \mathcal{M}$ if there exists an open neighborhood $\hat{\mathcal{B}}_{\overline{\bm{x}}}(\epsilon)$ around $\overline{\bm{x}}$ with radius $\epsilon>0$ and a mapping $G: \mathbb{R}^n \to \mathbb{R}^{n-r}$ such that:
    \begin{enumerate}
        \item [{\rm (i)}] $G$ is $\mathcal{C}^p$-smooth.
        \item [{\rm (ii)}] The Jacobian $\nabla G(\overline{\bm{x}})$ has full row rank.
        \item [{\rm (iii)}] The following equality holds:
       $
           \mathcal{M} \cap \hat{\mathcal{B}}_{\overline{\bm{x}}}(\epsilon) = \{\bm{x} \in \hat{\mathcal{B}}_{\overline{\bm{x}}}(\epsilon) : G(\bm{x}) = \bm{0}\}.
       $
    \end{enumerate}
    We refer to $G(\bm{x}) = 0$ as the locally defined equation of $\mathcal{M}$.
\end{definition}
%This definition characterizes a $\mathcal{C}^p$-smooth manifold $\mathcal{M}$ of dimension $r$ around a point $\overline{\bm{x}}$ in terms of the existence of a $\mathcal{C}^p$-smooth mapping $\bm{G}$ from $\mathbb{R}^n$ to $\mathbb{R}^{n-r}$, such that the manifold is locally described as the set of points satisfying $\bm{G}(\bm{x}) = \bm{0}$, and the derivative $\nabla \bm{G}(\overline{\bm{x}})$ has full {row} rank. 
We define the tangent and normal spaces to $\mathcal{M}$ at $\bm{x}$ as $T_{\mathcal{M}}(\bm{x}) := \mbox{Null}(\nabla G(\bm{x}))$ and $N_{\mathcal{M}}(\bm{x}) := (T_{\mathcal{M}}(\bm{x}))^{\perp}$. Next, we introduce the concept of a partly smooth function and an active manifold \cite[Definition 2.7]{lewis2002active}.
\begin{definition}[Partly smooth function and active manifold]\label{def: Active manifold}
    Let $f : \mathbb{R}^n \to \mathbb{R} \cup \{\infty\}$ be a closed function, and $\mathcal{M} \subseteq \mathbb{R}^n$ be a set containing a point $\overline{\bm{x}}$. The function $f$ is said to be partly smooth at $\overline{\bm{x}}$ relative to $\mathcal{M}$ if:
        \begin{itemize}
        \item [{\rm (i)}] (Smoothness) The set $\mathcal{M} \cap \hat{\mathcal{B}}_{\overline{\bm{x}}}(\epsilon)$ is a $\mathcal{C}^p$-smooth manifold, and the restriction of $f$ to $\mathcal{M} \cap \hat{\mathcal{B}}_{\overline{\bm{x}}}(\epsilon)$ is $\mathcal{C}^p$-smooth.
        \item [{\rm (ii)}] (Regularity) For all $\bm{x}$ near $\overline{\bm{x}}$ in $\mathcal{M}$, $f$ is regular and has a subgradient.
        \item [{\rm (iii)}] (Normal sharpness) $dh(\bm{x})(-\bm{\nu}) > -dh(\bm{x})(\bm{\nu})$ for all nonzero $\bm{\nu}$ in $N_{\mathcal{M}}(\overline{\bm{x}})$.
        \item [{\rm (iv)}] (Subgradient continuity) The subdifferential mapping $\partial_c f$ is continuous at $\overline{\bm{x}}$ relative to $\mathcal{M}$.
    \end{itemize}
The manifold $\mathcal{M}$ is called a $\mathcal{C}^p$-active manifold, and $f$ admits $\mathcal{M}$ at $\overline{\bm{x}}$.
\end{definition}

Partly smooth functions encompass a wide variety of functions, such as smooth functions, polyhedral functions, the indicator and distance functions of a smooth manifold $\mathcal{M}$, and the sum of a smooth function and a partly smooth function. A more detailed introduction is provided in \cite{lewis2002active}. Numerous examples, particularly in signal processing, machine learning, and statistics, are discussed in \cite{vaiter2017model}. We summarize several examples in Table \ref{f admit active manifold} \cite{liang2017activity}. Figure \ref{fig:Common functions and their active manifolds} illustrates the landscapes of several widely used nonsmooth functions. As shown, the $\ell_1$, $\ell_1-\ell_2$, and $\ell_p$ functions admit a $\mathcal{C}^{\infty}$-active manifold at $(1,0)$ with $\mathcal{M} = \{(x,y) : y = 0\}$. The $\ell_{\infty}$ function and TV regularization admit a $\mathcal{C}^{\infty}$-active manifold at $(1,1)$ with $\mathcal{M} = \{(x,y) : x = y\}$. When restricted to $\mathcal{M}$ at the considered points, these functions vary smoothly along $\mathcal{M}$ locally, while grow sharply when moving normal to $\mathcal{M}$. We also introduce a class of nonconvex sparsity regularization functions that admit active manifolds in Table \ref{tab1}.

\begin{table}[htbp]
    \renewcommand\arraystretch{1.1}
    %\scalebox{1.5}{
      \begin{center}
    \begin{tabular}{@{}llllll@{}}
    %\begin{tabular}{@{}lll@{}}
    \hline
      Function  & Active manifold &  Function  & Active manifold  \\
    \hline
      $\|\cdot\|_1$   & $%\mathcal{M} = 
    \{\bm{z} \in \mathbb{R}^n : I_{\bm{z}} \subseteq I_{\overline{\bm{x}}}\}$ &
     $\|\cdot\|_\infty$   & $%\mathcal{M} = 
    \{\bm{z} \in \mathbb{R}^n : \bm{z}_{J_{\overline{\bm{x}}}} \in \mathbb{R}\mbox{sign}(\overline{\bm{x}}_{J_{\overline{\bm{x}}}})\}$    
 \\ 

        $\|\cdot\|_1-\|\cdot\|_2$ & %$\mathcal{M} = 
    $\{\bm{z} \in \mathbb{R}^n : I_{\bm{z}} \subseteq I_{\overline{\bm{x}}}\}$ & 
         $\|D_{\textbf{DIF}}\cdot\|_1$   & $%\mathcal{M} = 
    \{\bm{z} \in \mathbb{R}^n : I_{D_{\textbf{DIF}}\bm{z}}  \subseteq I_{D_{\textbf{DIF}}\overline{\bm{x}}}\} $   \\

    $\|\cdot\|_p^p$   & {$\{\bm{z} \in \mathbb{R}^n : I_{\bm{z}} \subseteq I_{\overline{\bm{x}}}\}$} &
    
    $\chi_{[\bm{l},\bm{u}]}(\cdot)$ & {$\{\bm{z} \in \mathbb{R}^n : \forall \overline{x}_i = u_i \:\mbox{or}\: l_i, z_i = \overline{x}_i \}$} \ \\
    \hline
    \end{tabular}
  \end{center}
  % \vspace{-2pt}
    \caption{Common functions and their active manifolds $\mathcal{M}$ at a given point $\overline{\bm{x}}$. We define $I_{\overline{\bm{x}}} = \{i : \overline{x}_i \neq 0\}$ and $J_{\overline{\bm{x}}} = \{i : |\overline{x}_i| = \|\overline{\bm{x}}\|_{\infty}\}$. We use $D_{\textbf{DIF}}\bm{x} = [0, x_2 - x_1, \dots, x_n - x_{n-1}]$ to represent the finite difference of $\bm{x}$ \cite{rudin1992nonlinear}. The notation $\mathbb{R}\text{sign}(\bm{x}_{I_{\bm{x}}})$ refers to the span of sign$(\bm{x}_{I_{\bm{x}}})$, and {$-\infty \leq \bm{l} \leq \bm{u} \leq +\infty$}. }
    \label{f admit active manifold}
    % \vspace{-10pt}
\end{table}

\begin{figure}[htbp]
% \vspace{-8pt}
\centering
\subfloat[%$\ell_1$ function: 
$ |x| + |y|$]{\label{fig:a}\includegraphics[width=0.3\textwidth]{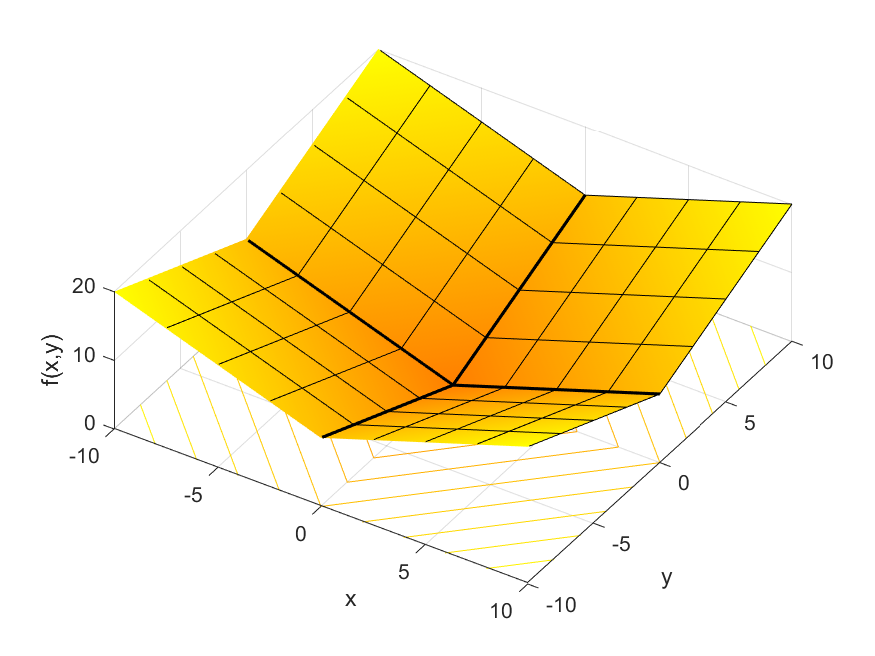}}\quad\quad
\subfloat[%$\ell_1-\ell_2$ function: 
$ |x| + |y|-\sqrt{x^2+y^2}$]{\label{fig:b}\includegraphics[width=0.3\textwidth]{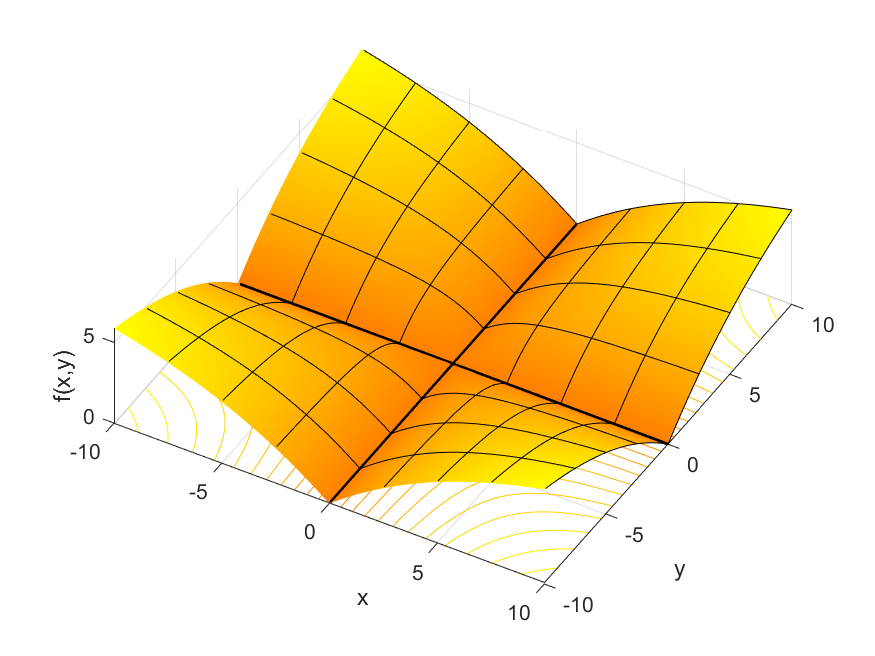}}\\
% \vspace{-10pt}
\subfloat[%$\ell_p$ function: 
$ |x|^{0.75} + |y|^{0.75}$]{\label{fig:e}\includegraphics[width=0.3\textwidth]{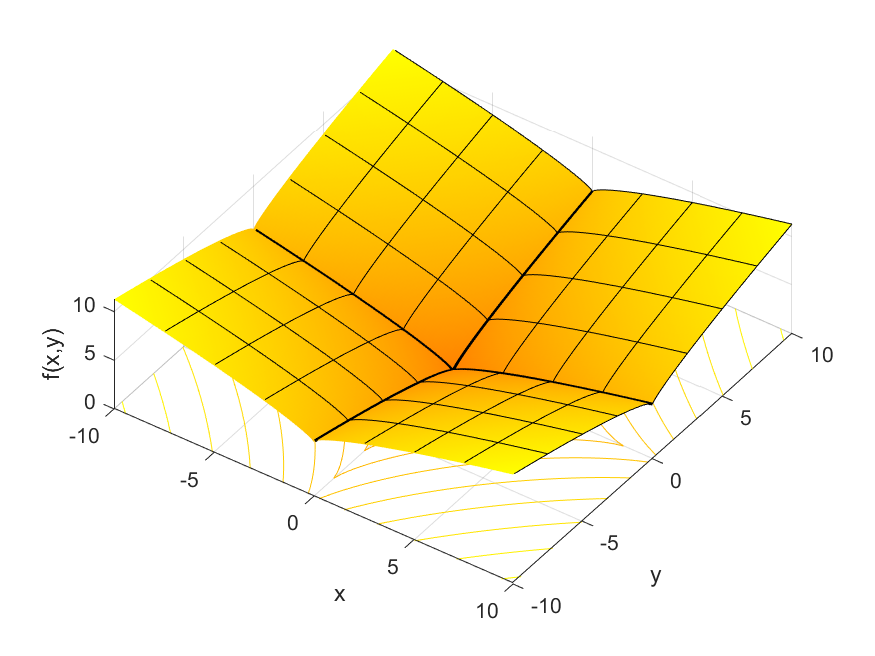}}\quad
\subfloat[%$\ell_{\infty}$ function: 
$ \max(|x|,|y|)$ ]{\label{fig:c}\includegraphics[width=0.3\textwidth]{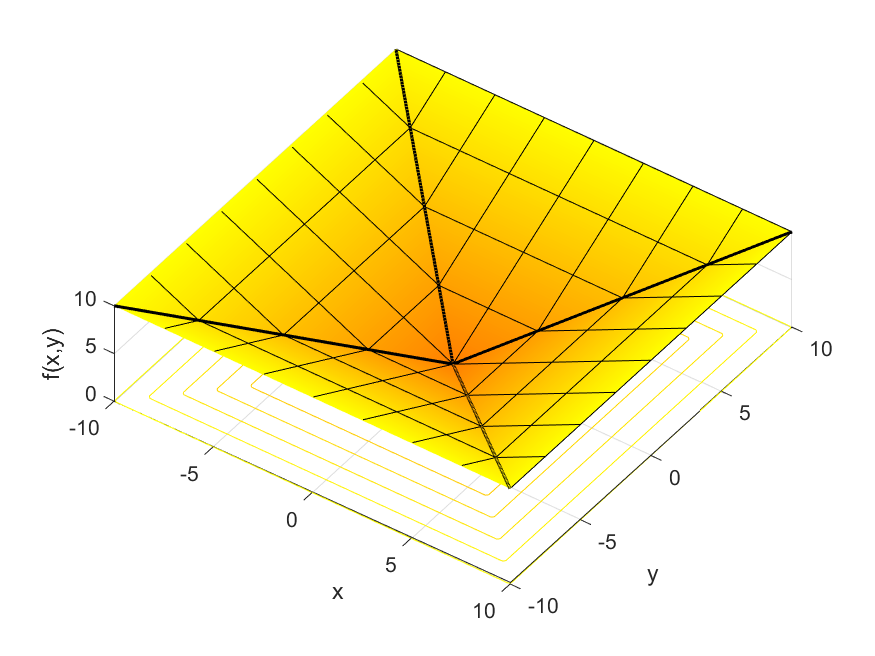}} \quad 
%% \vspace{-10pt}
\subfloat[%\red{TV function}: 
$ |y-x|$ ]{\label{fig:d}\includegraphics[width=0.3\textwidth]{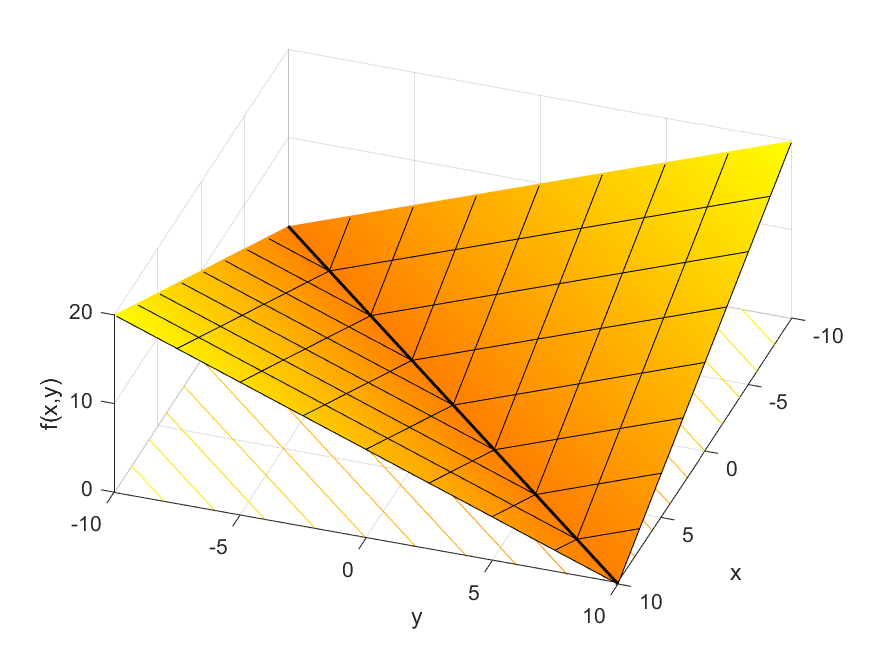}}
% \vspace{-15pt}
\caption{Landscapes of commonly used 2-dimensional nonsmooth functions $f(x,y)$.} 
\label{fig:Common functions and their active manifolds}
% \vspace{-5pt}
\end{figure}
% \vspace{-6pt}
For a partly smooth function $f$ and its active manifold $\mathcal{M}$, a key research question is whether an algorithm can identify the active manifold $\mathcal{M}$ within a finite number of steps. To ensure this, the non-degeneracy condition $\bm{0} \in \mathop{{\rm rint}}\partial_c f(\bm{x})$ is often imposed \cite{liang2017activity,mai2020anderson}. As argued in \cite{hare2007identifying}, this condition is nearly essential for guaranteeing the identification property. With the non-degeneracy condition, it has been shown that many algorithms can successfully identify the active manifold in a finite number of steps \cite{atenas2024weakly,davis2022proximal,hare2007identifying}. We now provide a formal definition:

% \vspace{-4pt}
\begin{definition}[Active manifold identification]\label{Active manifold identification}
    Consider an algorithm solving problem \eqref{Nonsmooth optimization problem}, which generates a sequence $\bm{x}^k \rightarrow \overline{\bm{x}}$, and whose iterations follow the fixed-point scheme $\bm{x}^{k+1} = H(\bm{x}^k)$. Suppose that $F$ admits an active manifold $\mathcal{M}$ at $\overline{\bm{x}}$. Then, we say the algorithm identifies the active manifold $\mathcal{M}$ if the inclusion $H(\bm{x}) \in \mathcal{M}$ holds for all $\bm{x}$ near $\overline{\bm{x}}$.
\end{definition}
%By Definition \ref{Active manifold identification}, it holds that $\bm{x}^k \in \mathcal{M}$ for all sufficiently large $k$. 
% \vspace{-4pt}
This property is crucial in the subsequent analysis, as it ensures the local smoothness of the  iteration mappings defined for the algorithms under consideration. To proceed, we examine the family of optimization problems:
\begin{equation}\label{ iteration mappings}
    \setlength{\abovedisplayskip}{8pt}   % 上方间距
    \setlength{\belowdisplayskip}{5pt}   % 下方间距
    \zeta(\bm{x}) = \mathop{\arg\min}_{\bm{y} \in \{\bm{y} \mid G(\bm{x}, \bm{y}) = \bm{0}, J(\bm{x}, \bm{y}) \leq \bm{0}\}} v(\bm{x}, \bm{y}), \quad \text{and} \quad V(\bm{x}) = v(\bm{x}, \zeta(\bm{x})),
\end{equation}
where $v: \mathbb{R}^n \times \mathbb{R}^d \rightarrow \mathbb{R} \cup \{\infty\}$, $G: \mathbb{R}^n \times \mathbb{R}^d \rightarrow \mathbb{R}^r$, and $J: \mathbb{R}^n \times \mathbb{R}^d \rightarrow \mathbb{R}^c$ are $\mathcal{C}^p$-smooth ($p \geq 2$). Following sensitivity analysis theory \cite{fiacco1976sensitivity}, particularly \cite[Theorem 2.1, Corollary 4.1]{fiacco1976sensitivity}, we establish the local smoothness of $\zeta(\bm{x})$ and $V(\bm{x})$.

\begin{lemma}[Sensitivity analysis]
\label{Local smoothness of subproblem mappings}
    Consider the problem in \eqref{ iteration mappings} and fix a point $\overline{\bm{x}} \in \mathbb{R}^n$. Suppose that $\zeta(\overline{\bm{x}})$ is a strong local minimizer and a unique global minimizer, and that $\zeta(\bm{x})$ varies continuously near $\overline{\bm{x}}$. If the Jacobian $\nabla_{\bm{y}} G(\overline{\bm{x}}, \zeta(\overline{\bm{x}}))$ has full row rank and the inequality constraint is inactive at $(\overline{\bm{x}}, \zeta(\overline{\bm{x}}))$, i.e., $J_i(\overline{\bm{x}}, \zeta(\overline{\bm{x}})) < 0$ for all $i = 1, \ldots, c$, then $\zeta(\bm{x})$ is $\mathcal{C}^{p-1}$-smooth and $V(\bm{x})$ is $\mathcal{C}^{p}$-smooth for all $\bm{x}$ near $\overline{\bm{x}}$.
\end{lemma}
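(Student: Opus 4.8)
The plan is to reduce the constrained problem to a purely equality-constrained one, invoke the implicit function theorem on the associated KKT system to obtain the smoothness of $\zeta$, and finally upgrade the smoothness of $V$ by one order via an envelope identity. This mirrors the structure of the cited results \cite{fiacco1976sensitivity}, so much of the work is verifying that their hypotheses hold here and tracking the smoothness degree.

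First I would exploit the strict inactivity of the inequality constraints. Since each $J_i$ is $\mathcal{C}^p$ (hence continuous) with $J_i(\overline{\bm{x}}, \zeta(\overline{\bm{x}})) < 0$, and since $\zeta$ varies continuously near $\overline{\bm{x}}$, the composite map $\bm{x} \mapsto J_i(\bm{x}, \zeta(\bm{x}))$ stays strictly negative on a neighborhood of $\overline{\bm{x}}$. Hence the inequality constraints are inactive throughout this neighborhood and may be discarded: locally $\zeta(\bm{x}) = \arg\min\{v(\bm{x}, \bm{y}) : G(\bm{x}, \bm{y}) = \bm{0}\}$. This also makes strict complementary slackness automatic, since the multipliers attached to the inactive inequalities vanish, so only the equality block matters.

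Next I would set up the KKT map. With Lagrangian $L(\bm{x}, \bm{y}, \bm{\mu}) = v(\bm{x}, \bm{y}) + \bm{\mu}^\top G(\bm{x}, \bm{y})$, define
\[
\Phi(\bm{x}, \bm{y}, \bm{\mu}) = \begin{pmatrix} \nabla_{\bm{y}} v(\bm{x}, \bm{y}) + \nabla_{\bm{y}} G(\bm{x}, \bm{y})^\top \bm{\mu} \\ G(\bm{x}, \bm{y}) \end{pmatrix},
\]
which is $\mathcal{C}^{p-1}$-smooth because it involves first derivatives of the $\mathcal{C}^p$ data. Writing $\overline{\bm{y}} = \zeta(\overline{\bm{x}})$, the full-row-rank of $\nabla_{\bm{y}} G(\overline{\bm{x}}, \overline{\bm{y}})$ yields a unique $\overline{\bm{\mu}}$ with $\Phi(\overline{\bm{x}}, \overline{\bm{y}}, \overline{\bm{\mu}}) = \bm{0}$. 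The Jacobian of $\Phi$ in $(\bm{y}, \bm{\mu})$ is the bordered matrix $\left[\begin{smallmatrix} \nabla^2_{\bm{y}\bm{y}} L & \nabla_{\bm{y}} G^\top \\ \nabla_{\bm{y}} G & \bm{0}\end{smallmatrix}\right]$, which I would show is nonsingular by combining the full-row-rank of $\nabla_{\bm{y}} G$ with the second-order sufficient condition supplied by the strong-local-minimizer hypothesis, namely positive definiteness of $\nabla^2_{\bm{y}\bm{y}} L$ restricted to $\mathrm{Null}(\nabla_{\bm{y}} G)$. The implicit function theorem then produces $\mathcal{C}^{p-1}$ maps $\bm{y}(\bm{x})$ and $\bm{\mu}(\bm{x})$ solving $\Phi = \bm{0}$ near $\overline{\bm{x}}$.

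I would then identify $\bm{y}(\bm{x})$ with $\zeta(\bm{x})$: as a global minimizer of the equality-only problem under local LICQ, $\zeta(\bm{x})$ is a KKT point, and by continuity it lies in the neighborhood where the IFT solution is the unique such point, so $\zeta(\bm{x}) = \bm{y}(\bm{x}) \in \mathcal{C}^{p-1}$. Finally, for $V(\bm{x}) = v(\bm{x}, \zeta(\bm{x}))$ I would invoke the envelope identity: differentiating $V$ and substituting the stationarity relation $\nabla_{\bm{y}} v = -\nabla_{\bm{y}} G^\top \bm{\mu}$ together with the differentiated constraint $\nabla_{\bm{y}} G\, \nabla \zeta = -\nabla_{\bm{x}} G$ collapses the chain-rule term, giving $\nabla V(\bm{x}) = \nabla_{\bm{x}} v(\bm{x}, \zeta(\bm{x})) + \nabla_{\bm{x}} G(\bm{x}, \zeta(\bm{x}))^\top \bm{\mu}(\bm{x})$. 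The right-hand side is a product and composition of $\mathcal{C}^{p-1}$ objects, so $\nabla V \in \mathcal{C}^{p-1}$ and therefore $V \in \mathcal{C}^p$, one order higher than $\zeta$. The main obstacle I anticipate is the nonsingularity of the bordered KKT Jacobian: one must extract the genuine second-order sufficient condition from the strong-local-minimizer hypothesis and verify that it, together with LICQ, rules out any kernel vector of the bordered matrix; the matching step is a secondary subtlety handled entirely by the uniqueness and continuity assumptions rather than by new estimates.
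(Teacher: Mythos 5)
Your proposal is correct and follows essentially the same route as the paper: the paper does not give a self-contained proof but simply invokes Fiacco's sensitivity results \cite[Theorem 2.1, Corollary 4.1]{fiacco1976sensitivity}, whose underlying argument is exactly what you reconstruct --- discard the strictly inactive inequalities, apply the implicit function theorem to the KKT system (nonsingularity of the bordered Jacobian from LICQ plus the second-order condition encoded in the strong-local-minimizer hypothesis), identify the resulting $\mathcal{C}^{p-1}$ branch with $\zeta$ via uniqueness and continuity, and gain one extra order of smoothness for $V$ through the envelope identity.
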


When considering Anderson acceleration for an algorithm solving problem \eqref{Nonsmooth optimization problem}, the key idea is to reformulate the iteration as a fixed-point iteration $\bm{x}^{k+1} = H(\bm{x}^k)$ and then apply Algorithm \ref{AA fixed-point}. By utilizing active manifold theory, we can offer a simplified convergence analysis for various algorithms. The active manifold identification property and Lemma \ref{Local smoothness of subproblem mappings} are pivotal. With these, we establish the local R-linear convergence rate for Anderson-accelerated algorithms. The central idea is to ensure the local smoothness of $H$, following the reasoning in \cite{davis2022proximal}. Theorem \ref{Convergence for AA prox-type algorithm} forms the foundation for the theoretical analysis in the subsequent sections.

\begin{theorem}\label{Convergence for AA prox-type algorithm}
  Consider the problem \eqref{Nonsmooth optimization problem}, and let $\bm{x}^*$ be a Clarke critical point of $F$. Suppose that $F$ admits a $\mathcal{C}^{3}$-active manifold $\mathcal{M}$ at $\bm{x}^*$. Assume an algorithm generates a sequence $\bm{x}^k \rightarrow \bm{x}^*$, with iterations formulated as
  \begin{equation}\label{FPI}
      \setlength{\abovedisplayskip}{8pt}   % 上方间距
      \setlength{\belowdisplayskip}{5pt}   % 下方间距
     \bm{x}^{k+1} = H(\bm{x}^k), \quad \text{where} \quad H(\bm{x}) := \mathop{\arg\min}_{\bm{y} \in \mathbb{R}^n} s(\bm{x}, \bm{y}).
  \end{equation}
  Here $s: \mathbb{R}^n \times \mathbb{R}^n \to \mathbb{R} \cup \{\infty\}$ inherits the $\mathcal{C}^3$-smoothness of $F$ on $\mathcal{M}$ near $\bm{x}^*$. Assume that $H$ is a unique strong global minimizer and the algorithm identifies $\mathcal{M}$, the following statements hold:
  \begin{itemize}
  \item[(i)] $H$ is $\mathcal{C}^{2}$-smooth, and both $H$ and $\nabla H$ are Lipschitz continuous near $\bm{x}^*$.
  \item[(ii)] Under Assumption \ref{Assumption AA}, if the initial point $\bm{x}^0$ is sufficiently close to $\bm{x}^*$, applying Anderson acceleration to \eqref{FPI} via Algorithm \ref{AA fixed-point} yields iterates that converge to $\bm{x}^*$ R-linearly with a rate $\hat{\gamma} \in (\gamma, 1)$, i.e., \eqref{cov-rate} holds.
  \end{itemize}
  \end{theorem}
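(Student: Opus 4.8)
The plan is to reduce the unconstrained subproblem defining $H$ to a constrained problem over the active manifold $\mathcal{M}$, to obtain $\mathcal{C}^2$-smoothness of $H$ from the sensitivity-analysis result in Lemma~\ref{Local smoothness of subproblem mappings}, and then to invoke Theorem~\ref{Convergence AA} for part (ii). First I would exploit the identification property: since the algorithm identifies $\mathcal{M}$, Definition~\ref{Active manifold identification} gives $H(\bm{x}) \in \mathcal{M}$ for all $\bm{x}$ near $\bm{x}^*$. Because $\mathcal{M}$ is a $\mathcal{C}^3$-manifold around $\bm{x}^* \in \mathcal{M}$, Definition~\ref{def:2.2} supplies a $\mathcal{C}^3$ mapping $G$ with $\nabla G(\bm{x}^*)$ of full row rank and $\mathcal{M} = \{\bm{y} : G(\bm{y}) = \bm{0}\}$ locally. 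As the unconstrained global minimizer $H(\bm{x})$ already lies in $\mathcal{M}$, it coincides with the minimizer of $s(\bm{x},\cdot)$ restricted to $\mathcal{M}$, so that locally
\begin{equation*}
  H(\bm{x}) = \mathop{\arg\min}_{\bm{y}\,:\, G(\bm{y}) = \bm{0}} s(\bm{x}, \bm{y}).
\end{equation*}
This places $H$ exactly in the parametric template of Lemma~\ref{Local smoothness of subproblem mappings}, with $v = s$, equality constraint $G(\bm{x},\bm{y}) := G(\bm{y})$, and no inequality constraints.

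Next I would verify the hypotheses of Lemma~\ref{Local smoothness of subproblem mappings} at $\overline{\bm{x}} = \bm{x}^*$. The map $s$ inherits $\mathcal{C}^3$-smoothness from $F$ on $\mathcal{M}$ and $G$ is $\mathcal{C}^3$, so the data are $\mathcal{C}^p$-smooth with $p = 3$; by assumption $H(\bm{x})$ is a unique strong global minimizer, which supplies the strong-local/unique-global requirement. The continuity of $\zeta = H$ near $\bm{x}^*$ follows from uniqueness of the minimizer together with standard well-posedness of parametric minimization (lower semicontinuity and level-boundedness inherited from $F$), via a Berge-type argument. Continuity of $H$ also yields $H(\bm{x}^*) = \bm{x}^*$, so $\zeta(\bm{x}^*) = \bm{x}^* \in \mathcal{M}$ and $\nabla_{\bm{y}} G(\bm{x}^*, \zeta(\bm{x}^*)) = \nabla G(\bm{x}^*)$ has full row rank, while the inequality constraint is vacuous. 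Lemma~\ref{Local smoothness of subproblem mappings} with $p = 3$ then gives that $\zeta = H$ is $\mathcal{C}^{2}$-smooth near $\bm{x}^*$. Restricting to a compact neighborhood of $\bm{x}^*$, continuity of $\nabla^2 H$ bounds $\nabla H$ and $\nabla^2 H$, so $H$ and $\nabla H$ are Lipschitz there, establishing (i).

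Part (ii) is then immediate: a $\mathcal{C}^2$-smooth $H$ is Lipschitz continuously differentiable on some neighborhood $\mathcal{B}_{\bm{x}^*}(\hat{\rho})$, which is precisely the regularity hypothesis of Theorem~\ref{Convergence AA}. Combined with Assumption~\ref{Assumption AA}, Theorem~\ref{Convergence AA} delivers the R-linear convergence estimate \eqref{cov-rate}, completing the proof.

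The step I expect to be the main obstacle is the reduction itself: rigorously arguing that the unconstrained $\arg\min$ equals the manifold-constrained $\arg\min$ (using the identification inclusion and uniqueness of the global minimizer) and that $\zeta = H$ varies continuously near $\bm{x}^*$, since these are exactly the conditions that let Lemma~\ref{Local smoothness of subproblem mappings} apply. By contrast, the passage from $\mathcal{C}^2$-smoothness to local Lipschitzness of $H$ and $\nabla H$, and the final appeal to Theorem~\ref{Convergence AA}, are routine once the smoothness of $H$ is secured.
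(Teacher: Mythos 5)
Your overall route is the same as the paper's: use the identification property to replace the unconstrained subproblem by a minimization over $\mathcal{M}$, encode $\mathcal{M}$ by its locally defining equation, apply Lemma~\ref{Local smoothness of subproblem mappings}, and finish with Theorem~\ref{Convergence AA}. However, there is a genuine gap at the point where you feed the data into Lemma~\ref{Local smoothness of subproblem mappings}: you take $v = s$. The lemma requires $v$ to be $\mathcal{C}^p$-smooth as a function on $\mathbb{R}^n \times \mathbb{R}^d$ (ambient smoothness), whereas the theorem only grants that $s(\bm{x},\cdot)$ is $\mathcal{C}^3$ \emph{when restricted to} $\mathcal{M}$ near $\bm{x}^*$. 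In all intended applications $s(\bm{x},\cdot)$ is genuinely nonsmooth, even extended-valued, off the manifold (e.g.\ $s(\bm{x},\bm{y}) = g(\bm{y}) + \tfrac{1}{2\beta}\|\bm{y}-\bm{x}\|^2$ with $g = \|\cdot\|_1$ or an indicator function), so the hypotheses of the sensitivity lemma simply fail for $v = s$; your sentence ``the data are $\mathcal{C}^p$-smooth with $p=3$'' conflates smoothness along $\mathcal{M}$ with smoothness in the ambient space. The paper's proof inserts exactly the device you are missing: it chooses a $\mathcal{C}^3$-smooth function $\hat{s}(\bm{x},\cdot)$ on the ambient space that coincides with $s(\bm{x},\cdot)$ on $\mathcal{M}$ near $\bm{x}^*$, and applies the lemma with $v = \hat{s}$. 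Since the constraint forces $\bm{y} \in \mathcal{M}$, replacing $s$ by $\hat{s}$ does not change the $\arg\min$, and now all data are ambient-smooth.

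A secondary, related looseness: you drop the ball constraint and treat the problem as purely equality-constrained, writing $H(\bm{x}) = \arg\min_{G(\bm{y})=\bm{0}} s(\bm{x},\bm{y})$. But $\{ \bm{y} : G(\bm{y}) = \bm{0}\}$ coincides with $\mathcal{M}$ only inside the neighborhood from Definition~\ref{def:2.2}, and (once $s$ is replaced by $\hat{s}$) the agreement $\hat{s} = s$ also holds only near $\bm{x}^*$ on $\mathcal{M}$; outside that neighborhood the zero set of $G$ can contain extraneous points on which $\hat{s}$ may be smaller, which would break the identity between $H$ and the constrained minimizer $\zeta$. This is why the paper adds the inequality constraint $J(\bm{x},\bm{y}) = \|\bm{y}-\bm{x}^*\|^2 - \hat{\rho}^2 \le 0$, which localizes the feasible set and is inactive at $(\bm{x}^*, \zeta(\bm{x}^*))$, exactly as Lemma~\ref{Local smoothness of subproblem mappings} permits. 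Both defects are fixable---insert the smooth extension $\hat{s}$ and the localizing ball constraint---after which your argument (the full-row-rank verification, the continuity of $\zeta$, the Lipschitz bounds from $\mathcal{C}^2$-smoothness, and the final appeal to Theorem~\ref{Convergence AA}) matches the paper's proof.
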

  
  \begin{proof}
  (i) The active manifold identification property ensures that $H(\bm{x}) \in \mathcal{M}$ for all $\bm{x}$ near $\bm{x}^*$. Thus, we can approach the analysis via sensitivity analysis. Let $\hat{s}(\bm{x}, \cdot)$ be a $\mathcal{C}^3$-smooth function that coincides with ${s}(\bm{x}, \cdot)$ on $\mathcal{M}$ near $\bm{x}^*$. For any $\bm{x}$ close to $\bm{x}^*$, we have 
  \begin{equation}\label{Subp_active}
      \setlength{\abovedisplayskip}{8pt}   % 上方间距
      \setlength{\belowdisplayskip}{5pt}   % 下方间距
     H(\bm{x}) = \mathop{\arg\min}_{\bm{y} \in \mathcal{M} \cap \mathcal{B}_{\bm{x}^*}(\hat{\rho})} \hat{s}(\bm{x}, \bm{y}),
  \end{equation}
  where $0<\hat{\rho}\le\rho$. Then we are ready to apply Lemma \ref{Local smoothness of subproblem mappings} to the problem \eqref{Subp_active} with 
  $$
      \setlength{\abovedisplayskip}{8pt}   % 上方间距
      \setlength{\belowdisplayskip}{5pt}   % 下方间距
      \zeta(\bm{x}) := H(\bm{x})\text{ and }   v(\bm{x} , \bm{y}):= \hat{s}(\bm{x}, \bm{y}).
  $$
  Additionally, by Definition \ref{def:2.2} (iii), there exists a mapping $G^{\mathcal{M}}: \mathbb{R}^n \to \mathbb{R}^{n-r}$ such that $\mathcal{M} \cap \mathcal{B}_{\bm{x}^*}(\hat{\rho}) = \{\bm{x} \in \mathcal{B}_{\bm{x}^*}(\hat{\rho}): G^{\mathcal{M}}(\bm{x}) = \bm{0}\}$. Hence, 
  $$
      \setlength{\abovedisplayskip}{8pt}   % 上方间距
      \setlength{\belowdisplayskip}{5pt}   % 下方间距
      \mathcal{M} \cap \mathcal{B}_{\bm{x}^*}(\hat{\rho}) = \{\bm{x} \in \mathbb{R}^n: G(\bm{x}, \bm{y}) = \bm{0}, J(\bm{x}, \bm{y}) \le \bm{0}  \},
  $$
  where $G(\bm{x}, \bm{y}) := G^{\mathcal{M}}(\bm{y})$ \text{ and }$ J(\bm{x}, \bm{y}) := \|\bm{y} - \bm{x}^*\|^2 - \hat{\rho}^2.$ 
  % $$
  %     \setlength{\abovedisplayskip}{8pt}   % 上方间距
  %     \setlength{\belowdisplayskip}{5pt}   % 下方间距
  %    \zeta(\bm{x}) := H(\bm{x}) = \mathop{\arg\min}_{\bm{y} \in \{\bm{y} \mid G(\bm{x}, \bm{y}) = \bm{0}, J(\bm{x}, \bm{y}) \le \bm{0}\}} \hat{s}(\bm{x}, \bm{y}),
  % $$
  Then we conclude that $H$ is $\mathcal{C}^2$-smooth near $\bm{x}^*$, which also implies that $H$ and $\nabla H$ are Lipschitz continuous.
  
  (ii) Building on the result from (i) and Assumption \ref{Assumption AA}, there exists a neighborhood $\mathcal{B}_{\bm{x}^*}(\rho)$ of $\bm{x}^*$ for some sufficiently small $\rho > 0$ such that $\|\nabla H(\bm{x})\| \le \gamma$ for all $\bm{x} \in \mathcal{B}_{\bm{x}^*}(\rho)$. By Theorem \ref{Convergence AA}, we establish the R-linear convergence of the Anderson-accelerated sequence, thus completing the proof.
  \end{proof}
  
  \vspace{-5pt}
  \section{Anderson-accelerated proximal algorithms} 
  Our task in this section is to establish the theoretical properties of Anderson acceleration applied to three proximal algorithms for solving the non-smooth optimization problem:
  \begin{equation} \label{Problem in proximal}
      \setlength{\abovedisplayskip}{8pt}   % 上方间距
      \setlength{\belowdisplayskip}{5pt}   % 下方间距
      \min\limits _{\bm{x}\in \mathbb{R}^n} F(\bm{x}) := (f \circ h)(\bm{x}) + g(\bm{x}),
  \end{equation}
  where $ h: \mathbb{R}^n \rightarrow \mathbb{R}^d $ is $\mathcal{C}^2$-smooth, $ f: \mathbb{R}^d \rightarrow \mathbb{R} $ is convex, and $ g: \mathbb{R}^n \rightarrow \mathbb{R} \cup \{\infty\} $ is closed and $\sigma$-weakly convex. Similar to \cite{lewis2016proximal}, we assume that there exists $ \eta > 0 $ such that:
  \begin{equation}\label{assumption for proximal linear}
      \setlength{\abovedisplayskip}{8pt}   % 上方间距
      \setlength{\belowdisplayskip}{5pt}   % 下方间距
      |f(h(\bm{y})) - f(h(\bm{x}) + \nabla h(\bm{x})(\bm{y} - \bm{x}))| \leq \frac{\eta}{2} \|\bm{y} - \bm{x}\|^2, \quad \forall \bm{x}, \bm{y} \in \mathbb{R}^n.
  \end{equation}
  % This leads to the conclusion that $ F $ is $(\eta + \sigma)$-weakly convex.
  
  Problem \eqref{Problem in proximal} and its variants encompass a wide array of non-smooth optimization issues. In Table \ref{Proximal algorithm table}, we present three types of non-smooth optimization problems along with commonly used algorithms, including the proximal point algorithm \cite{mordukhovich2006variational}, proximal gradient (Forward–Backward splitting) algorithm \cite{beck2009fast,liang2017activity}, and proximal linear algorithm \cite{lewis2016proximal}. Each of these algorithms involves subproblems that can be represented as fixed-point iterations in the form \eqref{FPI}.

  \begin{table}[htbp]
    % \fontsize{5pt}{20pt}\selectfont
        \renewcommand\arraystretch{1}
        %\scalebox{1.5}{\scalebox{0.8}{ % 调整表格整体大小
          \begin{center}
        \begin{tabular}{@{}lll@{}}
        %\begin{tabular}{@{}lll@{}}
        \hline
        Alg. & $F(\bm{x})$  & $s(\bm{x},\bm{y})$  \\
        \hline
        PPA    & $g(\bm{x})$   & $g(\bm{y}) + \frac{1}{2\beta}\|\bm{y} - \bm{x}\|^2$  \\
    
        PGA    & $h(\bm{x}) + g(\bm{x})$   & $h(\bm{x}) + \nabla h(\bm{x})(\bm{y} - \bm{x})) + g(\bm{y}) + \frac{1}{2\beta}\|\bm{y} - \bm{x}\|^2$ \\
            
        PLA     & $(f\circ h)(\bm{x}) + g(\bm{x})$   & $f(h(\bm{x}) + \nabla h(\bm{x})(\bm{y} - \bm{x})) + g(\bm{y}) + \frac{1}{2\beta}\|\bm{y} - \bm{x}\|^2$ \\
        \hline
        \end{tabular}
      \end{center}
      \vspace{-3pt}
        \caption{ Three proximal-type algorithms  for solving  \eqref{Problem in proximal}. {PPA: Proximal point algorithm; PGA: Proximal gradient algorithm; PLA: Proximal linear algorithm}}\label{Proximal algorithm table}
        \vspace{-10pt}
    \end{table}
    \vspace{-0pt}
    %Based on Theorem \ref{Convergence for AA prox-type algorithm}, to establish Anderson acceleration for proximal type algorithms it suffices to 
    % In subsequent subsections we will present the theoretical properties of Anderson accelerations for specific proximal type algorithms applied to \eqref{Problem in proximal} in different scenarios. 
    As illustrated in Table \ref{Proximal algorithm table}, the proximal linear algorithm generalizes both the proximal point algorithm (with $ h \equiv 0 $, $ f \equiv 0 $) and the proximal gradient algorithm (with $ d = 1 $, $ f = \mathbb{I}_d $). Moreover, in the context of the proximal gradient algorithm, condition \eqref{assumption for proximal linear} can be inferred from the Lipschitz continuity of $ \nabla h $. We will focus on the proximal linear algorithm and provide a convergence analysis for the Anderson-accelerated proximal linear algorithm. The results in Theorem \ref{Theorem AA proxlinear} can also be applied to Anderson-accelerated proximal point and proximal gradient algorithms, 
    
    We are now ready to demonstrate the locally R-linear convergence rate of the Anderson-accelerated proximal linear algorithm. Theorem \ref{Convergence for AA prox-type algorithm} plays a critical role in the subsequent analysis, enabling us to show the smoothness of the iteration mapping of the proximal linear algorithm around \( \bm{x}^* \) and establish the local R-linear convergence rate for the Anderson-accelerated proximal linear algorithm. Before presenting the main result of this section, we introduce the concept of the composite active manifold for the composite function \( F \) in problem \eqref{Problem in proximal}, based on the active manifolds of \( f \) and \( g \) and the transversality condition—a fundamental notion in differential geometry that guarantees the intersection of two manifolds remains a manifold \cite[Theorem 6.30]{lee2012smooth} \cite[Definition 5.1]{davis2022proximal}. 
    % The revision intuitively requires the sharpness condition to be satisfied for the original function with a linear tilt
    % First, we need a more generalized active manifold. Consider a set $\mathcal{G} \subset \mathbb{R}^n$, a closed weakly convex function $g:\R^n\to \R\cap\{\infty\}$, a point $\bm{x} \in \mathcal{G}$, and  $\bm{v} \in \partial g(\bm{x})$. We call $\mathcal{G}$ a $\mathcal{C}^p$-active manifold of $g$ at $\bm{x}$ for $\bm{v}$ if $\mathcal{G}$ is a $\mathcal{C}^p$-active manifold of the function $g - \langle \bm{v}, \cdot \rangle$ at $\bm{x}$. With the revised active manifold of $g$ and $f$, we introduce the concept of a composite active manifold \cite[Definition 5.1]{davis2022proximal} for the problem \eqref{Problem in proximal}. 
    \begin{definition}[Composite active manifold]\label{Composite active manifold}
    Consider the composite optimization problem \eqref{Problem in proximal}. Let $ \bm{x}^* $ be a critical point of $ F $. For any $ \bm{w}^* \in \partial f(h(\bm{x}^*)) $ and $ \bm{v}^* \in \partial g(\bm{x}^*) $ satisfying
    $\bm{0} \in \nabla h(\bm{x}^*)^* \bm{w}^* + \bm{v}^*,$ suppose the following conditions hold:
    \begin{enumerate}
        \item [{\rm (i)}] There exist $ \mathcal{C}^p $-smooth manifolds $ \mathcal{G} \subset \mathbb{R}^n $ and $ \mathcal{F} \subset \mathbb{R}^d $ containing $ \bm{x}^* $ and $ h(\bm{x}^*) $, respectively, satisfying the transversality condition:
        \begin{equation} \label{Transversality Condition}
            \setlength{\abovedisplayskip}{8pt}   % 上方间距
            \setlength{\belowdisplayskip}{5pt}   % 下方间距
            \nabla h(\bm{x}^*)[T_{\mathcal{G}}(\bm{x}^*)] + T_{\mathcal{F}}(h(\bm{x}^*)) = \mathbb{R}^d.
        \end{equation}
        \item [{\rm (ii)}] Non-degeneracy conditions $ \bm{v}^* \in \mbox{rint}(\partial g(\bm{x}^*)) $ and $ \bm{w}^* \in \mbox{rint}(\partial f(h(\bm{x}^*))) $ hold.
        \item [{\rm (iii)}] $ \mathcal{G} $ is an active manifold of $ g $ at $ \bm{x}^* $, and $ \mathcal{F} $ is an active manifold of $ f $ at $ h(\bm{x}^*) $.
    \end{enumerate}
    We say that $ F $ admits a $ \mathcal{C}^p $-composite active manifold $ \mathcal{M} := \mathcal{G} \cap h^{-1}(\mathcal{F}) $ at $ \bm{x}^* $.
    \end{definition}
    
    According to Definition \ref{Composite active manifold}, to ensure that $ F $ admits an active manifold at $ \bm{x}^* $, the nonsmooth functions $ g $ and $ f $ must admit active manifolds $ \mathcal{G} $ and $ \mathcal{F} $ at $ \bm{x}^* $ and $ h(\bm{x}^*) $, respectively. The non-degeneracy conditions for $ f $ and $ g $, along with the classical transversality condition \eqref{Transversality Condition} ensure $ \bm{0} \in \mbox{rint}(\partial F(\bm{x}^*)) $ \cite{davis2022proximal}.
    
    \begin{theorem}\label{Theorem AA proxlinear}
    Consider the composite optimization problem \eqref{Problem in proximal}. Let $ \bm{x}^* $ be a critical point of $ F $ and let
    \begin{equation}\label{FPI prox-linear}
        \setlength{\abovedisplayskip}{8pt}   % 上方间距
        \setlength{\belowdisplayskip}{5pt}   % 下方间距
        H(\bm{x}) = \mathop{\arg\min}_{\bm{y}\in\mathbb{R}^n}  f(h(\bm{x}) + \nabla h(\bm{x})(\bm{y} - \bm{x})) + g(\bm{y}) + \frac{1}{2\beta}\|\bm{y} - \bm{x}\|^2,
    \end{equation}
    where 
    $ \beta \in (0, \min\{\sigma^{-1},\eta^{-1}\}) $
    % $ \beta \in (0, \min\{\sigma^{-1},L_f^{-1}\}) $
    % $ \beta \in (0, (\sigma + \eta)^{-1}) $
    . Suppose that $ F $ admits a $ \mathcal{C}^3 $-composite active manifold $ \mathcal{M} $ as defined in Definition \ref{Composite active manifold} and that $ h $ is $ \mathcal{C}^4 $-smooth near $ \bm{x}^* $.
    \footnote{To apply the sensitivity analysis theory for establishing the \( \mathcal{C}^2 \)-smoothness of \( H \), higher-order smoothness of \( h \) is required to ensure that the functions in \eqref{PLA_Subp_active} are \( \mathcal{C}^3 \)-smooth.}
    Then:
    \begin{itemize}
    \item[(i)] $ H $ is $ \mathcal{C}^2 $-smooth, and both $ H $ and $ \nabla H $ are Lipschitz continuous around $ \bm{x}^* $.
    \item[(ii)] Under Assumption \ref{Assumption AA}, if the initial point $ \bm{x}^0 $ is sufficiently close to $ \bm{x}^* $, then applying Anderson acceleration to \eqref{FPI prox-linear} according to Algorithm \ref{AA fixed-point} results in iterates that converge to $ \bm{x}^* $ R-linearly with $ \hat{\gamma} \in (\gamma, 1) $; that is, \eqref{cov-rate} holds.
    \end{itemize}
    \end{theorem}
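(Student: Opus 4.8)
The plan is to deduce both claims from Theorem \ref{Convergence for AA prox-type algorithm} by checking its hypotheses for the proximal linear subproblem \eqref{FPI prox-linear}, taking $s(\bm{x}, \bm{y}) = f(h(\bm{x}) + \nabla h(\bm{x})(\bm{y} - \bm{x})) + g(\bm{y}) + \frac{1}{2\beta}\|\bm{y} - \bm{x}\|^2$. Four things must be verified: that $F$ admits a $\mathcal{C}^3$-active manifold at $\bm{x}^*$ with $\bm{x}^*$ Clarke critical; that $H(\bm{x})$ is a unique strong global minimizer; that the proximal linear map identifies $\mathcal{M}$; and, the crux, that $s$ inherits the $\mathcal{C}^3$-smoothness of $F$ along $\mathcal{M}$ near $\bm{x}^*$. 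Once these hold, parts (i) and (ii) are read off verbatim from Theorem \ref{Convergence for AA prox-type algorithm}.

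The first three checks are comparatively direct. Since $F$ admits the $\mathcal{C}^3$-composite active manifold $\mathcal{M} = \mathcal{G} \cap h^{-1}(\mathcal{F})$ of Definition \ref{Composite active manifold}, the transversality condition \eqref{Transversality Condition} guarantees that this intersection is a genuine $\mathcal{C}^3$-manifold and that $F$ is partly smooth relative to it, while the non-degeneracy conditions give $\bm{0} \in \mathrm{rint}(\partial F(\bm{x}^*))$; since $\partial F \subseteq \partial_c F$, the critical point $\bm{x}^*$ is in particular Clarke critical. Uniqueness and strong minimality of $H(\bm{x})$ follow from strong convexity of $s(\bm{x}, \cdot)$: the composite term $f(h(\bm{x}) + \nabla h(\bm{x})(\cdot - \bm{x}))$ is convex (convex $f$ precomposed with an affine map), $g$ is $\sigma$-weakly convex, and the proximal term is $\beta^{-1}$-strongly convex, so $s(\bm{x}, \cdot)$ is $(\beta^{-1} - \sigma)$-strongly convex whenever $\beta < \sigma^{-1}$. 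The identification property — $H(\bm{x}) \in \mathcal{M}$ for all $\bm{x}$ near $\bm{x}^*$, i.e. $H(\bm{x}) \in \mathcal{G}$ and the linearized image $\ell(\bm{x}, H(\bm{x})) := h(\bm{x}) + \nabla h(\bm{x})(H(\bm{x}) - \bm{x}) \in \mathcal{F}$ — follows from the non-degeneracy conditions via standard subproblem identification results, with $\beta < \eta^{-1}$ and \eqref{assumption for proximal linear} supplying the sufficient-decrease inequality that keeps the iterates converging to $\bm{x}^*$ so that identification activates.

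The hard part will be the smoothness of $s$ along $\mathcal{M}$, complicated by the linearization: when $\bm{y} \in \mathcal{M}$ the point $h(\bm{y})$ lies on $\mathcal{F}$, but the argument $\ell(\bm{x}, \bm{y})$ fed to $f$ need not. My plan to resolve this is to recast $H$ as a constrained minimization over the $\bm{x}$-dependent feasible set cut out by the defining equations $G^{\mathcal{G}}(\bm{y}) = \bm{0}$ of $\mathcal{G}$ and $G^{\mathcal{F}}(\ell(\bm{x}, \bm{y})) = \bm{0}$, which is exactly the form to which Lemma \ref{Local smoothness of subproblem mappings} applies. Replacing $f$ and $g$ by $\mathcal{C}^3$ extensions $\hat{f}, \hat{g}$ agreeing with them on $\mathcal{F}$ and $\mathcal{G}$, the objective $\hat{f}(\ell(\bm{x}, \bm{y})) + \hat{g}(\bm{y}) + \frac{1}{2\beta}\|\bm{y} - \bm{x}\|^2$ coincides with $s$ precisely on this feasible set (by identification), and it is jointly $\mathcal{C}^3$ because $\ell$ is $\mathcal{C}^3$ in $(\bm{x}, \bm{y})$ — this is exactly where the $\mathcal{C}^4$-smoothness of $h$ is consumed, since $\ell$ carries one derivative of $h$. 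The same smoothness renders the constraint maps $\mathcal{C}^3$, and the transversality condition \eqref{Transversality Condition} is precisely what forces the Jacobian of $(G^{\mathcal{G}}(\bm{y}), G^{\mathcal{F}}(\ell(\bm{x}, \bm{y})))$ in $\bm{y}$ to have full row rank at $(\bm{x}^*, \bm{x}^*)$, while strong convexity supplies the strong-minimizer and continuity hypotheses of Lemma \ref{Local smoothness of subproblem mappings}. That lemma then yields that $H = \zeta$ is $\mathcal{C}^2$-smooth near $\bm{x}^*$.

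With $\mathcal{C}^2$-smoothness of $H$ established, $H$ and $\nabla H$ are automatically Lipschitz on a compact neighborhood of $\bm{x}^*$, giving (i). For (ii), since $\bm{x}^*$ is a fixed point and $\nabla H$ is continuous, I would shrink the neighborhood so that $\|\nabla H(\bm{x})\| \le \gamma$ there, matching Assumption \ref{Assumption AA}(i), and then invoke Theorem \ref{Convergence AA} to obtain the R-linear rate \eqref{cov-rate}. I expect the genuine difficulty to be concentrated in the third paragraph — verifying identification in the linearized sense $\ell(\bm{x}, H(\bm{x})) \in \mathcal{F}$ and transferring transversality to the full-rank Jacobian of the $\bm{x}$-dependent constraints — rather than in the convergence bookkeeping, which is inherited wholesale from Theorems \ref{Convergence for AA prox-type algorithm} and \ref{Convergence AA}.
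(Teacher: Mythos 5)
Your proposal is correct and follows essentially the same route as the paper's own proof: citing the identification property of the proximal linear subproblem (that $H(\bm{x}) \in \mathcal{G}$ and the linearized image lies in $\mathcal{F}$), replacing $f,g$ by $\mathcal{C}^3$ extensions $\hat{f},\hat{g}$, recasting $H$ as the constrained problem with defining equations $G^{\mathcal{G}}(\bm{y})=\bm{0}$ and $G^{\mathcal{F}}(h(\bm{x})+\nabla h(\bm{x})(\bm{y}-\bm{x}))=\bm{0}$, and invoking Lemma \ref{Local smoothness of subproblem mappings} with transversality supplying the full-row-rank Jacobian and strong convexity the unique strong minimizer, before concluding via Theorem \ref{Convergence AA}. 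Even your observation that the $\mathcal{C}^4$-smoothness of $h$ is consumed by the linearization map matches the paper's footnote, so no substantive difference or gap exists.
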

    
    \begin{proof}
    The proximal linear algorithm has been shown to possess the active manifold identification property \cite[Theorem 4.11]{lewis2016proximal}. Thus, the inclusions \cite[Theorem 5.3]{davis2022proximal} $
    H(\bm{x}) \in \mathcal{G}$ \text{and} $ h(\bm{x}) + \nabla h(\bm{x})(H(\bm{x}) - \bm{x}) \in \mathcal{F}$
    hold for all $ \bm{x} $ near $ \bm{x}^* $. We can establish the smoothness of $ H $ through sensitivity analysis. Consider any $ \mathcal{C}^3 $-smooth functions $ \hat{f}:\mathbb{R}^d\rightarrow \mathbb{R} $ and $ \hat{g}:\mathbb{R}^n\rightarrow \mathbb{R} $ that agree with $ f $ and $ g $ near $ h(\bm{x}^*) $ and $ \bm{x}^* $ on the smooth manifolds $ \mathcal{F} $ and $ \mathcal{G} $. Then $ H(\bm{x}) $ solves the smooth problem near $ \bm{x}^* $:
    \begin{equation}\label{PLA_Subp_active}
        \setlength{\abovedisplayskip}{8pt}   % 上方间距
        \setlength{\belowdisplayskip}{5pt}   % 下方间距
        \begin{aligned}
        H(\bm{x}) = &\arg\min_{\bm{y}} \hat{f}(h(\bm{x}) + \nabla h(\bm{x})(\bm{y} - \bm{x})) + \hat{g}(\bm{y}) + \frac{1}{2\beta}\|\bm{y} - \bm{x}\|^2 \\
        &\quad\quad\,\, \text{s.t. } \bm{y} \in \mathcal{G} \cap \mathcal{B}_{\bm{x}^*}(\hat{\rho}), \quad h(\bm{x}) + \nabla h(\bm{x})(\bm{y} - \bm{x}) \in \mathcal{F} \cap \mathcal{B}_{h(\bm{x}^*)}(\hat{\rho}),
        \end{aligned}
    \end{equation}
    where $ \mathcal{B}_{\bm{x}^*}(\hat{\rho}) $ and $ \mathcal{B}_{h(\bm{x}^*)}(\hat{\rho}) $ are neighborhoods of $ \bm{x}^* $ and $ h(\bm{x}^*) $ with radius $0< \hat{\rho}\le\rho $. By Definition \ref{def:2.2}, there exist mappings $ G^{\mathcal{G}} $ and $ G^{\mathcal{F}} $ that identify the locally defined equations of $ \mathcal{G} $ and $ \mathcal{F} $. Thus, problem \eqref{PLA_Subp_active} can be reformulated as:
    $$
        \setlength{\abovedisplayskip}{2pt}
        \setlength{\belowdisplayskip}{2pt}
        \begin{aligned}
            \zeta(\bm{x}) &:= H(\bm{x}), \quad v(\bm{x}, \bm{y}) := \hat{f}(h(\bm{x}) + \nabla h(\bm{x})(\bm{y} - \bm{x})) + \hat{g}(\bm{y}) + \frac{1}{2\beta}\|\bm{y} - \bm{x}\|^2, 
        \end{aligned}
    $$
    where the constraints 
    $$
        \setlength{\abovedisplayskip}{2pt}
        \setlength{\belowdisplayskip}{2pt}
        \bm{y} \in \mathcal{G} \cap \mathcal{B}_{\bm{x}^*}(\hat{\rho}) \quad \text{and} \quad h(\bm{x}) + \nabla h(\bm{x})(\bm{y} - \bm{x}) \in \mathcal{F} \cap \mathcal{B}_{h(\bm{x}^*)}(\hat{\rho})
    $$ 
    are equivalent to the condition 
    $
        \{\bm{x} \in \mathbb{R}^n: G(\bm{x}, \bm{y}) = \bm{0}, J(\bm{x}, \bm{y}) \leq \bm{0}\}
    $
    with
    $$
        \setlength{\abovedisplayskip}{2pt}
        \setlength{\belowdisplayskip}{2pt}
        \begin{aligned}
            G(\bm{x}, \bm{y}) &:= \left[\begin{array}{c}
                G^{\mathcal{G}}(\bm{y}) \\
                G^{\mathcal{F}}(h(\bm{x}) + \nabla h(\bm{x})(\bm{y} - \bm{x}))
            \end{array}\right], \\
            J(\bm{x}, \bm{y}) &:= \left[\begin{array}{c}
                \|\bm{y} - \bm{x}^*\|^2 - \rho^2 \\
                \|h(\bm{x}) + \nabla h(\bm{x})(\bm{y} - \bm{x}) - h(\bm{x}^*)\|^2 - \rho^2
            \end{array}\right].
        \end{aligned}
    $$
    To apply Lemma \ref{Local smoothness of subproblem mappings}, we first verify its assumptions. The continuity of $ \zeta $ is ensured by Lemma \ref{Moreau envelope and the proximal point mapping} under the condition $ \beta \in (0, \min\{\sigma^{-1},\eta^{-1}\})$, which implies that $ \zeta(\bm{x}^*) $ is a strong global minimizer. The properties of the active manifolds $ \mathcal{G} $ and $ \mathcal{F} $ and the transversality condition \eqref{Transversality Condition} guarantee that the Jacobian $ \nabla_{\bm{y}}G $ has full row rank at $ (\bm{x}^*,\zeta(\bm{x}^*)) $. Furthermore, the inequality constraint $ J $ is naturally inactive at $ (\bm{x}^*, \zeta(\bm{x}^*)) $. By applying Lemma \ref{Local smoothness of subproblem mappings}, we conclude that $ H $ is $ \mathcal{C}^2 $-smooth around $ \bm{x}^* $. Similar to the analysis in Theorem \ref{Convergence for AA prox-type algorithm}, we complete the proof.
    \end{proof}

    \begin{remark}
      There has been research on saddle point problems that demonstrates the smoothness of the mapping $ H $ as defined in \eqref{FPI prox-linear}. More details can be found in \cite[Theorem 5.1]{davis2022proximal}. Moreover, it is worth noting that Theorem \ref{Theorem AA proxlinear} is also applicable to PPA and PGA with their corresponding simplified problems (as shown in Table \ref{Proximal algorithm table}). Furthermore, Definition \ref{Composite active manifold} regarding composite active manifold reduces to Definition \ref{def: Active manifold} in the context of PPA and PGA, with the non-degeneracy condition $ \bm{0} \in \text{rint}(\partial F(\bm{x}^*)) $.
      \end{remark} 
      \section{Anderson-accelerated proximal coordinate descent algorithm}
      Consider the problem:
      \begin{equation} \label{Problem coordinate descent algorithm}
          \setlength{\abovedisplayskip}{8pt}   % 上方间距
          \setlength{\belowdisplayskip}{5pt}   % 下方间距
        \min\limits _{\bm{x}\in \mathbb{R}^n} F(\bm{x}):= f(\bm{x}) +\displaystyle\sum^n_{i=1} g_i ( x_i),
      \end{equation}
      where $ f:\mathbb{R}^n\rightarrow \mathbb{R} $ is a $\mathcal{C}^2$-smooth convex function with $ L_f $-Lipschitz continuous gradients, and $ g_i:\mathbb{R}\rightarrow \mathbb{R}\cup \{\infty\}, i=1, \ldots,n $ are proper, closed and convex. A common algorithm for solving such problems is the proximal coordinate descent algorithm (PCD) \cite{fercoq2015accelerated,wright2015coordinate}. In this paper, we focus on PCD in a cyclic scheme, which selects coordinate indices cyclically from the set $ \{1,\ldots,n\} $. The  iteration can be expressed as follows: 
      \begin{equation}
          \setlength{\abovedisplayskip}{8pt}   % 上方间距
          \setlength{\belowdisplayskip}{5pt}   % 下方间距
          \begin{aligned}\label{CPCD}
        \begin{cases}
          \bm{x}^{k,1} &=  \bm{x}^{k} + ({\rm prox}_{\beta g_1} (x^k_1 - \beta\nabla_1 f(\bm{x}^k))-x^k_1)e_1, \\
          \vdots &\\
          \bm{x}^{k+1} &=  \bm{x}^{k,n-1} + ({\rm prox}_{\beta g_n} (x^{k,n-1}_n - \beta\nabla_n f(\bm{x}^{k,n-1}))-x^{k,n-1}_n)e_n,
        \end{cases}
      \end{aligned}
      \end{equation}
      Introduce $ \psi_j:\mathbb{R}^n\rightarrow \mathbb{R}^n $ with the $ i $th component defined by
      \begin{equation}
          \setlength{\abovedisplayskip}{8pt}   % 上方间距
          \setlength{\belowdisplayskip}{5pt}   % 下方间距
          \begin{aligned}
              \relax    [\psi_j(\bm{x})]_i :=
            \begin{cases}
             {\rm prox}_{\beta g_i} (x_i - \beta\nabla_i f(\bm{x}))  & i = j, \\
                  x_i  & i \neq j,
            \end{cases}
          \end{aligned}
      \end{equation}
      where $ i,j=1,\ldots,n $. 
      Given a fixed constant $ \beta \in(0, L_f^{-1}) $, \eqref{CPCD} can be interpreted as a fixed-point iteration:
      \begin{equation}
          \setlength{\abovedisplayskip}{8pt}   % 上方间距
          \setlength{\belowdisplayskip}{5pt}   % 下方间距
          \begin{aligned}\label{FPI coordinate}
          \bm{x}^{k+1} = H(\bm{x}^k) :=\psi_n \circ \cdots  \circ \psi_1 (\bm{x}^k)=        \left[\begin{array}{c}
                  [\psi_1(\bm{x}^k)]_1  \\
                  {[(\psi_2\circ\psi_1)(\bm{x}^k)]}_2 \\
                  \cdots \\
                  {[(\psi_n \circ \cdots  \circ \psi_1)(\bm{x}^k)]}_n
                  \end{array}\right].
      \end{aligned}
      \end{equation}
      
      We also demonstrate the smoothness of the mapping $ H $ around $ \bm{x}^* $ through the active manifold identification property and Lemma \ref{Local smoothness of subproblem mappings}, and establish the locally R-linear convergence rate of Anderson-accelerated PCD.
      
      \begin{theorem}
         Consider the optimization problem \eqref{Problem coordinate descent algorithm}. Let $ \bm{x}^* $ be a critical point of $ F $ and $ H = \psi_n \circ \cdots  \circ \psi_1 $ with $ \beta \in(0, L_f^{-1}) $. Suppose that $ g $ admits a $ C^3 $-active manifold $ \mathcal{M} $ at $ \bm{x}^* $, $ f $ is $\mathcal{C}^4$-smooth near $ \bm{x}^* $, and $ \bm{0} \in \mbox{rint}(\partial F(\bm{x^*})) $, then:
      \begin{itemize}
      \item[(i)]  $ H $ is $\mathcal{C}^{2}$-smooth, and both $ H $ and $ \nabla H $ are Lipschitz continuous around $ \bm{x}^* $.
      \item[(ii)] Under Assumption \ref{Assumption AA}, if the initial point $ \bm{x}^0 $ is sufficiently close to $ \bm{x}^* $, applying Anderson acceleration to \eqref{FPI coordinate} according to Algorithm \ref{AA fixed-point} leads to iterates that converge to $ \bm{x}^* $ R-linearly with $ \hat{\gamma}\in(\gamma,1) $, i.e., \eqref{cov-rate} holds.
      \end{itemize}
      \end{theorem}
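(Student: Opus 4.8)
The plan is to reuse the two-part template of Theorems~\ref{Convergence for AA prox-type algorithm} and~\ref{Theorem AA proxlinear}: establish in part~(i) that the cyclic map $H=\psi_n\circ\cdots\circ\psi_1$ is $\mathcal{C}^2$-smooth near $\bm{x}^*$ with $H$ and $\nabla H$ Lipschitz, and then obtain part~(ii) immediately from Theorem~\ref{Convergence AA}. The genuinely new element, compared with the single-map analyses of those theorems, is that $H$ is a composition of $n$ coordinate updates, so the smoothness argument must be carried out for each $\psi_j$ separately and then transported through the composition.

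I would begin by recording the structural consequences of the hypotheses. Because $g(\bm{x})=\sum_i g_i(x_i)$ is separable, its active manifold factorizes as $\mathcal{M}=\mathcal{M}_1\times\cdots\times\mathcal{M}_n$, with $\mathcal{M}_j$ the active manifold of $g_j$ at $x_j^*$; since $f$ is smooth, $\mathcal{M}$ is also the active manifold of $F$ and no transversality condition is needed. Moreover $\partial F(\bm{x}^*)=\prod_i\big(\nabla_i f(\bm{x}^*)+\partial g_i(x_i^*)\big)$, so the nondegeneracy assumption $\bm{0}\in\mbox{rint}(\partial F(\bm{x}^*))$ is equivalent to the coordinatewise conditions $-\nabla_j f(\bm{x}^*)\in\mbox{rint}(\partial g_j(x_j^*))$ for every $j$. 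A useful observation is that $\bm{x}^*$ is a fixed point of each individual $\psi_j$, not merely of $H$: coordinatewise criticality gives $x_j^*={\rm prox}_{\beta g_j}(x_j^*-\beta\nabla_j f(\bm{x}^*))$, hence $\psi_j(\bm{x}^*)=\bm{x}^*$.

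Next I would analyze each $\psi_j$ locally. On coordinates $i\neq j$ the map $\psi_j$ is the identity, hence smooth; on coordinate $j$ the update $[\psi_j(\bm{x})]_j={\rm prox}_{\beta g_j}(x_j-\beta\nabla_j f(\bm{x}))$ is a one-dimensional proximal gradient step. The coordinatewise nondegeneracy $-\nabla_j f(\bm{x}^*)\in\mbox{rint}(\partial g_j(x_j^*))$ yields the identification $[\psi_j(\bm{x})]_j\in\mathcal{M}_j$ for all $\bm{x}$ near $\bm{x}^*$, exactly as in the proximal gradient identification underpinning Theorem~\ref{Theorem AA proxlinear}. Replacing $g_j$ by a $\mathcal{C}^3$-smooth $\hat{g}_j$ agreeing with it on $\mathcal{M}_j$, the coordinate update then solves a smooth problem constrained to $\mathcal{M}_j$; since $f$ is $\mathcal{C}^4$, the map $\bm{x}\mapsto\nabla_j f(\bm{x})$ is $\mathcal{C}^3$, so the subproblem objective is $\mathcal{C}^3$-smooth and Lemma~\ref{Local smoothness of subproblem mappings} applies with $p=3$. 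The remaining hypotheses of that lemma---a strong unique minimizer (from convexity of $g_j$ together with the strongly convex proximal term), continuity of the solution map (from Lemma~\ref{Moreau envelope and the proximal point mapping}(ii)), full row rank of the defining equation of $\mathcal{M}_j$ (Definition~\ref{def:2.2}(ii)), and inactivity of the ball constraint---are verified just as in Theorem~\ref{Theorem AA proxlinear}, so each $\psi_j$ is $\mathcal{C}^2$-smooth near $\bm{x}^*$.

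The main obstacle, and the step deserving the most care, is propagating smoothness through the composition while keeping the intermediate iterates in the region where each $\psi_j$ behaves well. Since every $\psi_j$ is continuous and fixes $\bm{x}^*$, I would shrink the neighborhood inductively so that the partial compositions $\psi_1(\bm{x}),(\psi_2\circ\psi_1)(\bm{x}),\ldots$ all remain inside the common neighborhood on which every $\psi_j$ is $\mathcal{C}^2$-smooth and identifies $\mathcal{M}_j$; this inductive shrinking is the only genuinely new bookkeeping relative to the earlier theorems. On such a neighborhood the composition $H$ of $\mathcal{C}^2$ maps is $\mathcal{C}^2$-smooth, so $H$ and $\nabla H$ are Lipschitz near $\bm{x}^*$, proving~(i); as a byproduct, since $\psi_j$ fixes the coordinates it does not update, the output obeys $H(\bm{x})\in\mathcal{M}_1\times\cdots\times\mathcal{M}_n=\mathcal{M}$, confirming identification for the full cyclic map. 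Finally, for~(ii), the $\mathcal{C}^2$-smoothness from~(i) makes $H$ Lipschitz continuously differentiable near $\bm{x}^*$, so under Assumption~\ref{Assumption AA} the R-linear rate~\eqref{cov-rate} follows verbatim from Theorem~\ref{Convergence AA}, exactly as in the proof of Theorem~\ref{Convergence for AA prox-type algorithm}(ii).
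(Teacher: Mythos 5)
Your proposal is correct and follows essentially the same route as the paper: replace each $g_i$ by a $\mathcal{C}^3$-smooth representative agreeing with it on the active manifold, apply the sensitivity-analysis result (Lemma~\ref{Local smoothness of subproblem mappings}) to each strongly convex coordinate subproblem to obtain $\mathcal{C}^2$-smoothness, pass to the composition, and conclude via Theorem~\ref{Convergence AA}. The only differences are organizational: the paper invokes the identification property of PCD from \cite{klopfenstein2024local} and inducts directly on the composed components $[(\psi_j\circ\cdots\circ\psi_1)(\cdot)]_j$, whereas you derive identification coordinatewise from the product structure of $\mathcal{M}$ and nondegeneracy, prove each $\psi_j$ smooth separately, and then use the chain rule with an inductive shrinking of neighborhoods.
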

      \begin{proof}
      Given that $ F $ admits the active manifold $ \mathcal{M} $, previous research has established the active manifold identification property of PCD \cite{klopfenstein2024local}. Specifically, , we have $ H(\bm{x})\in\mathcal{M} $ for all $\bm{x}$ near $\bm{x}^*$. However, we cannot directly establish the smoothness of $ H $ through Lemma \ref{Local smoothness of subproblem mappings}, as the mapping $ H $ does not correspond to a strongly convex function $ s(\bm{x},\cdot) $. Instead, we will infer the smoothness of $ H $ by analyzing the smoothness of $ \psi:=\psi_n\circ\cdots\circ\psi_1 $.
      
      Let $ \tilde{g} := \sum^n_{i=1} \tilde{g}_i $ with $ \tilde g_i $ being $ \mathcal{C}^3 $-smooth and coinciding with $ g_i $ for $ i=1,\ldots,n $ on $ \mathcal{M}$ near $\bm{x}^*$. Consequently, the component $ [\psi_1(\bm{x})]_1 $ uniquely solves the following optimization problem:
      $$
          \setlength{\abovedisplayskip}{8pt}   % 上方间距
          \setlength{\belowdisplayskip}{5pt}   % 下方间距
          \min_{y_1,\bm{y}\in\mathcal{M}\cap \mathcal{B}_{\bm{x}^*}(\hat{\rho})} s(\bm{x},y_1): = \nabla_1 f(\bm{x}) y_1 + \tilde{g}_1(y_1) + \frac{1}{2\beta} (y_1 - x_1)^2,
      $$
      where $\mathcal{B}_{\bm{x}^*}(\hat{\rho})$ is a neighborhood of $\bm{x}^*$ with radius $0<\hat{\rho}\le \rho$. Given any $ \bm{x}\in \mathbb{R}^n $, $ s(\bm{x}, y_1) $ is strongly convex with respect to $ y_1 $. Moreover, Lemma \ref{Moreau envelope and the proximal point mapping} implies that $ \psi $ is continuous. Thus, similar to the proof in Theorem \ref{Convergence for AA prox-type algorithm}, we can apply Lemma \ref{Local smoothness of subproblem mappings} to conclude that $ \psi_1 $ is $ \mathcal{C}^2 $-smooth around $ \bm{x}^* $. 
      
      Similarly, the component $ {[(\psi_2\circ\psi_1)(\bm{x})]}_2 $ uniquely solves the problem:
      $$
          \setlength{\abovedisplayskip}{8pt}   % 上方间距
          \setlength{\belowdisplayskip}{5pt}   % 下方间距
          \min_{y_2,\bm{y}\in\mathcal{M}\cap \mathcal{B}_{\bm{x}^*}(\hat{\rho})}  \nabla_2 f(\psi_1(\bm{x})) y_2 + \tilde{g}_2(y_2) + \frac{1}{2\beta} (y_2 - x_2)^2.
      $$
      By applying Lemma \ref{Local smoothness of subproblem mappings}, we derive that $ [(\psi_2 \circ \psi_1)(\cdot)]_2 $ is also $ \mathcal{C}^2 $-smooth. Repeating this process, we obtain that $ [(\psi_n\circ \dots\circ\psi_1)(\cdot)]_n $ is smooth, thus $ H $ is $ \mathcal{C}^2 $-smooth around $ \bm{x}^* $. Similar to the analysis in Theorem \ref{Convergence for AA prox-type algorithm}, statement (ii) can be proved.
      \end{proof}

      \section{Anderson-accelerated {DRS/ADMM}}
      % We now focus on the Anderson-accelerated Douglas-Rachford (DR) splitting algorithm. 
      We now consider the problem:
      \begin{equation} \label{Problem:DRS}
          \setlength{\abovedisplayskip}{8pt}   % 上方间距
          \setlength{\belowdisplayskip}{5pt}   % 下方间距
        \min\limits _{\bm{x}\in \mathbb{R}^n} F(\bm{x}):= f(\bm{x}) + g(\bm{x}),
      \end{equation}
      where $ f:\mathbb{R}^n\rightarrow \mathbb{R} $ is $\mathcal{C}^2$-smooth with $ L_f $-Lipschitz continuous gradients, and $ g:\mathbb{R}^n\rightarrow \mathbb{R}\cup \{\infty\} $ is closed and $\sigma$-weakly convex. The Douglas-Rachford splitting algorithm (DRS) \cite{fu2020anderson,themelis2020douglas} is a well-known method for solving such problems. Given $ \bm{z}^0\in\mathbb{R}^n $, $ \beta \in(0, \min\{\sigma^{-1},L_f^{-1}\}) $, and $ \delta \in (0,2) $, the iteration of DRS is performed as follows:
      $$    \setlength{\abovedisplayskip}{8pt}   % 上方间距
          \setlength{\belowdisplayskip}{5pt}   % 下方间距
          \begin{aligned}\label{Douglas-Rachford splitting}
              \begin{cases}
                  \bm{x}^{k+1} &= {\rm prox}_{\beta f}(\bm{z}^{k}), \\
                  \bm{y}^{k+1} &= {\rm prox}_{\beta g}(2\bm{x}^{k+1} - \bm{z}^{k}), \\
                  \bm{z}^{k+1} &= \bm{z}^{k} + \delta(\bm{y}^{k+1} - \bm{x}^{k}).
              \end{cases}
      \end{aligned}$$
      Introduce the mappings $ R_\beta:\mathbb{R}^n\to \mathbb{R} $ and $ S_\beta:\mathbb{R}^n\to \mathbb{R} $ defined by
      $$
          \setlength{\abovedisplayskip}{8pt}   % 上方间距
          \setlength{\belowdisplayskip}{5pt}   % 下方间距
              R_{\beta}(\bm{z}) = {\rm prox}_{\beta g}(2{\rm prox}_{\beta f}(\bm{z}) - \bm{z}), \quad S_{\beta}(\bm{z}) = \bm{z} + \delta(R_{\beta}(\bm{z})-{\rm prox}_{\beta f}(\bm{z})).
      $$
      It is worth noting that $ R_{\beta}(\bm{z}) $ is the solution to the DR envelope (DRE) \cite{themelis2020douglas} at $ \bm{z} $:
      \begin{equation}\label{Douglas-Rachford envelope}
          \setlength{\abovedisplayskip}{8pt}   % 上方间距
          \setlength{\belowdisplayskip}{5pt}   % 下方间距
          \min_{\bm{y}\in\mathbb{R}^n} \left\langle \nabla f(\bm{x}(\bm{z})), \bm{y} - \bm{x}(\bm{z}) \right\rangle + g(\bm{y}) + \frac{1}{2\beta} \|\bm{y} - \bm{x}(\bm{z})\|^2,
      \end{equation}
      where $ \bm{x}(\bm{z}) = \text{prox}_{\beta f}(\bm{z}) $. The DRS update \eqref{Douglas-Rachford splitting} can be viewed as a fixed-point iteration: $\bm{z}^{k+1} = S_{\beta}(\bm{z}^k).$ Let $ \bm{z}^* $ be the fixed point of $ S_\beta $. Then, the critical point $ \bm{x}^* $ of problem \eqref{Problem:DRS} and $ \bm{z}^* $ satisfy the following relations \cite[Proposition 3]{atenas2024weakly}:
      $$
          \setlength{\abovedisplayskip}{8pt}   % 上方间距
          \setlength{\belowdisplayskip}{5pt}   % 下方间距
          \bm{x}^* = {\rm prox}_{\beta f}(\bm{z}^*) \quad \text{and} \quad -\nabla f(\bm{x}^*) \in \partial g(\bm{x}^*) \Leftrightarrow \bm{z}^* = S_{\beta}(\bm{z}^*).
      $$
      
      Similarly, we can ensure the smoothness of the mapping $ S_{\beta}(\bm{z}) $ around $ \bm{z}^* $ with the help of the active manifold identification property and Lemma \ref{Local smoothness of subproblem mappings}. We then establish the locally R-linear convergence rate of the Anderson-accelerated DRS algorithm.
      
      \begin{theorem}\label{Cov:DRS}
      Consider the optimization problem \eqref{Problem:DRS}. Let $ \bm{x}^* $ be a critical point of $ F $ and $ \bm{z}^* $ a fixed point of $ H(\bm{z}) $ with $ \beta \in(0, \min\{\sigma^{-1},L_f^{-1}\}) $. If $ g $ admits a $ C^3 $-active manifold $ \mathcal{M} $ at $ \bm{x}^* $, $ f $ is $ \mathcal{C}^4 $-smooth near $ \bm{x}^* $, and $ \bm{0} \in \text{rint}(\partial F(\bm{x}^*)) $, then:
      \begin{itemize}
      \item[(i)] $ H $ is $ \mathcal{C}^{2} $-smooth; both $ H $ and $ \nabla H $ are Lipschitz continuous around $ \bm{z}^* $.
      \item[(ii)] Under Assumption \ref{Assumption AA}, if the initial point $ \bm{x}^0 $ is sufficiently close to $ \bm{x}^* $, then applying Anderson acceleration to $ \bm{z} = H(\bm{z}) := S_{\beta}(\bm{z}) $ according to Algorithm \ref{AA fixed-point} leads to convergence of the generated iterates to $ \bm{z}^* $ R-linearly with $ \hat{\gamma} \in (\gamma, 1) $; i.e. \eqref{cov-rate} holds.
      \end{itemize}
      \end{theorem}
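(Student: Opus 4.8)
The plan is to mirror the arguments of Theorems \ref{Convergence for AA prox-type algorithm} and \ref{Theorem AA proxlinear}, but first to dispose of the smooth forward proximal step so that sensitivity analysis can then be applied to the nonsmooth backward step. Observe that $H = S_\beta$ is built in two layers through $\bm{x}(\bm{z}) = {\rm prox}_{\beta f}(\bm{z})$ and $R_\beta(\bm{z}) = {\rm prox}_{\beta g}(2\bm{x}(\bm{z}) - \bm{z})$, with $S_\beta(\bm{z}) = \bm{z} + \delta\big(R_\beta(\bm{z}) - \bm{x}(\bm{z})\big)$; moreover the fixed-point relation forces $R_\beta(\bm{z}^*) = \bm{x}(\bm{z}^*) = \bm{x}^*$. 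Since $f$ is $\mathcal{C}^4$-smooth near $\bm{x}^*$ and $\beta \in (0, L_f^{-1})$, the inner subproblem defining $\bm{x}(\bm{z})$ is strongly convex, with optimality condition $\nabla f(\bm{x}) + \tfrac{1}{\beta}(\bm{x}-\bm{z}) = \bm{0}$ whose $\bm{x}$-Jacobian $\nabla^2 f + \tfrac{1}{\beta}\mathbb{I}_n \succeq (\tfrac{1}{\beta} - L_f)\mathbb{I}_n$ is positive definite. The implicit function theorem (equivalently Lemma \ref{Local smoothness of subproblem mappings} with the trivial manifold $\mathbb{R}^n$) then shows $\bm{x}(\cdot)$ is $\mathcal{C}^3$-smooth near $\bm{z}^*$.

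Next I would handle the backward step through the active manifold. DRS has been shown to identify $\mathcal{M}$ \cite{atenas2024weakly}, so $R_\beta(\bm{z}) \in \mathcal{M}$ for all $\bm{z}$ near $\bm{z}^*$. Replacing $g$ by a $\mathcal{C}^3$-smooth function $\hat{g}$ that agrees with $g$ on $\mathcal{M}$ near $\bm{x}^*$ and using the DR envelope reformulation \eqref{Douglas-Rachford envelope}, $R_\beta(\bm{z})$ solves the smooth constrained program obtained by setting $\zeta(\bm{z}) := R_\beta(\bm{z})$, $v(\bm{z},\bm{y}) := \hat{g}(\bm{y}) + \tfrac{1}{2\beta}\|\bm{y} - (2\bm{x}(\bm{z}) - \bm{z})\|^2$, equality constraint $G(\bm{z},\bm{y}) := G^{\mathcal{M}}(\bm{y})$ from the locally defining equation of $\mathcal{M}$, and inactive inequality $J(\bm{z},\bm{y}) := \|\bm{y} - \bm{x}^*\|^2 - \hat{\rho}^2$. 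Because $\bm{x}(\cdot)$ is $\mathcal{C}^3$ and $\hat{g}$ is $\mathcal{C}^3$, the objective $v$ is $\mathcal{C}^3$ jointly in $(\bm{z},\bm{y})$, which is exactly the regularity the sensitivity lemma requires.

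The remaining hypotheses of Lemma \ref{Local smoothness of subproblem mappings} follow as before: $\beta < \sigma^{-1}$ makes the $g$-prox subproblem $(\beta^{-1}-\sigma)$-strongly convex, so $R_\beta(\bm{z}^*) = \bm{x}^*$ is the unique strong global minimizer, while Lemma \ref{Moreau envelope and the proximal point mapping}(ii) together with the $\mathcal{C}^3$-continuity of $\bm{x}(\cdot)$ gives continuity of $\zeta$; full row rank of $\nabla_{\bm{y}}G$ comes from Definition \ref{def:2.2}(ii), the inequality is inactive because $R_\beta(\bm{z}^*) = \bm{x}^*$, and the non-degeneracy $\bm{0}\in{\rm rint}(\partial F(\bm{x}^*))$, i.e. $-\nabla f(\bm{x}^*) \in {\rm rint}(\partial g(\bm{x}^*))$, provides the constraint qualification. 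Applying the lemma with $p=3$ yields that $R_\beta$ is $\mathcal{C}^2$-smooth near $\bm{z}^*$; since $S_\beta(\bm{z}) = \bm{z} + \delta(R_\beta(\bm{z}) - \bm{x}(\bm{z}))$ combines the $\mathcal{C}^2$ map $R_\beta$ with the $\mathcal{C}^3$ map $\bm{x}(\cdot)$, the mapping $H = S_\beta$ is $\mathcal{C}^2$-smooth, whence $H$ and $\nabla H$ are Lipschitz near $\bm{z}^*$, establishing (i). Part (ii) then follows verbatim from the argument in Theorem \ref{Convergence for AA prox-type algorithm}: shrinking the neighborhood so that $\|\nabla H(\bm{z})\| \le \gamma < 1$ and invoking Theorem \ref{Convergence AA} under Assumption \ref{Assumption AA} delivers the R-linear rate \eqref{cov-rate} for iterates initialized near $\bm{z}^*$.

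I expect the main obstacle to be the cascaded derivative bookkeeping rather than a single hard estimate. The parameter $\bm{x}(\bm{z})$ feeding the backward subproblem must itself be smooth before Lemma \ref{Local smoothness of subproblem mappings} can be applied to $R_\beta$, and because that lemma loses one derivative, one is forced to start from $\mathcal{C}^4$-smoothness of $f$ (giving $\bm{x}(\cdot)\in\mathcal{C}^3$, hence $v\in\mathcal{C}^3$) in order to land at $\mathcal{C}^2$-smoothness of $H$; this is precisely why the hypothesis upgrades $f$ from $\mathcal{C}^2$ to $\mathcal{C}^4$. A secondary subtlety is that DRS identifies the manifold in the split variable $\bm{z}$ only indirectly, through the image $R_\beta(\bm{z}) \in \mathcal{M}$ rather than through $H$ itself, so the identification statement must be routed through $R_\beta$ and tied to $\bm{z}^*$ via $\bm{x}^* = {\rm prox}_{\beta f}(\bm{z}^*)$; I would rely on \cite{atenas2024weakly} for this identification.
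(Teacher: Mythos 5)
Your proposal is correct and follows essentially the same route as the paper's proof: invoke the identification result of \cite{atenas2024weakly} to get $R_\beta(\bm{z})\in\mathcal{M}$ near $\bm{z}^*$, apply Lemma \ref{Local smoothness of subproblem mappings} to the DRE \eqref{Douglas-Rachford envelope} to conclude $R_\beta$ (hence $S_\beta$) is $\mathcal{C}^2$-smooth, and finish part (ii) exactly as in Theorem \ref{Convergence for AA prox-type algorithm}. In fact your write-up is more complete than the paper's (which largely defers to \cite[Theorem 2]{atenas2024weakly}), since you make explicit the preliminary step that the parametric dependence $\bm{z}\mapsto{\rm prox}_{\beta f}(\bm{z})$ is itself $\mathcal{C}^3$-smooth — the point that justifies the $\mathcal{C}^4$ hypothesis on $f$ before the sensitivity lemma can be applied.
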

      
      \begin{proof}
      Research has established the active manifold identification property and the local smoothness of $ H $. For completeness, we briefly summarize these results; additional details can be found in \cite[Theorem 2]{atenas2024weakly}. In the context of the DRS algorithm, the active manifold identification property manifests as follows: For all $ \bm{z} $ near $ \bm{z}^* $, we have $ \bm{y} = R_{\beta}(\bm{z}) \in \mathcal{M} $. By applying Lemma \ref{Local smoothness of subproblem mappings} to the DRE as defined in \eqref{Douglas-Rachford envelope}, with the condition $ \beta \in (0, \min\{\sigma^{-1},L_f^{-1}\}) $, we deduce that $ R_{\beta} $ is $ \mathcal{C}^2 $-smooth around $ \bm{z}^* $. Consequently, this implies the $ \mathcal{C}^2 $-smoothness of $ S_{\beta} $. Following a similar argument as in Theorem \ref{Convergence for AA prox-type algorithm}, we conclude the proof.
      \end{proof}
      
      \begin{remark}
      Given the established equivalence between DRS and ADMM \cite{eckstein1992douglas,themelis2020douglas}, our analysis also applies to Anderson-accelerated ADMM. We consider the following linearly constrained optimization problem:
      \begin{equation}\label{problem:ADMM}
          \setlength{\abovedisplayskip}{8pt}   % 上方间距
          \setlength{\belowdisplayskip}{5pt}   % 下方间距
      \min\limits _{(\bm{u},\bm{w})\in \mathbb{R}^m\times \mathbb{R}^n} \:  \varphi_1(\bm{u})+\varphi_2(\bm{w})  \quad \text{s.t.} \quad A\bm{u}+B\bm{w}=\bm{b},
      \end{equation}
      where $\varphi_1:\mathbb{R}^m\rightarrow \mathbb{R}\cup \{\infty\}$, $\varphi_2:\mathbb{R}^n\rightarrow \mathbb{R}\cup \{\infty\}$, $A\in\mathbb{R}^{p\times m}$, $B\in\mathbb{R}^{p\times n}$ and $\bm{b}\in \mathbb{R}^p$. 
      The iterative scheme of ADMM is described as follows:
      $$
          \setlength{\abovedisplayskip}{8pt}   % 上方间距
          \setlength{\belowdisplayskip}{5pt}   % 下方间距
          \begin{aligned}\label{ADMM}
              \begin{cases}
                  \bm{u}^{k+1} &= \arg\min L_{\lambda}(\cdot,\bm{w}^{k},\bm{v}^{k}), \\
                  \bm{v}^{k+1} &= \bm{v}^k + \lambda (A\bm{u}^{k+1}+B\bm{w}^k-\bm{b}), \\
                  \bm{w}^{k+1} &= \arg\min L_{\lambda}(\bm{w}^{k+1},\cdot,\bm{v}^{k+1}).
              \end{cases}
          \end{aligned}
      $$
      Here, $\lambda>0$ is a penalty parameter, and the augmented Lagrangian $L_\lambda$ is defined as:
      $$
          \setlength{\abovedisplayskip}{8pt}   % 上方间距
          \setlength{\belowdisplayskip}{5pt}   % 下方间距
          L_{\lambda}(\bm{u},\bm{w},\bm{v}):=\varphi_1(\bm{u})+\varphi_2(\bm{w})+\langle \bm{v},A\bm{u}+B\bm{w}-\bm{b} \rangle + \frac{\lambda}{2}\|A\bm{u}+B\bm{w}-\bm{b}\|^2.
      $$
      For nonconvex problems, \cite{themelis2020douglas} thoroughly analyzes the equivalence between ADMM and DRS using the notion of the \textit{image function}. Through a primal reformulation, it is shown that ADMM for \eqref{problem:ADMM} is essentially equivalent to applying DRS to \eqref{Problem:DRS} with $ f=(A\varphi_1) $ and $ g=(B\varphi_2)(\bm{b}-\cdot) $. For a detailed explanation, readers are encouraged to refer to \cite[Section 5.1]{themelis2020douglas}. The correspondence between DRS variables $(\bm{x}, \bm{y}, \bm{z})$ and ADMM variables $(\bm{u}, \bm{v}, \bm{w})$ is established in \cite[Theorem 5.5]{themelis2020douglas} as follows:
      \begin{equation}\label{Variables correspondence}
          \setlength{\abovedisplayskip}{8pt}   % 上方间距
          \setlength{\belowdisplayskip}{5pt}   % 下方间距
          \bm{x} = A\bm{u}, \quad 
          \bm{y} = \bm{b}-B\bm{w}, \quad 
          \bm{z} = A\bm{u} - \bm{v}/\lambda.
      \end{equation}
      We now proceed to establish the local smoothness of the ADMM mapping $(\bm{u}^k, \bm{v}^k, \bm{w}^k) \\ \overset{\text{ADMM}}\longmapsto (\bm{u}^{k+1}, \bm{v}^{k+1}, \bm{w}^{k+1})$ under the following two conditions:
      
      (i) Problem \eqref{Problem:DRS} with $ f=(A\varphi_1) $ and $ g=(B\varphi_2)(\bm{b}-\cdot) $ satisfies the assumptions in Theorem \ref{Cov:DRS}.
      
      (ii) The correspondence between DRS variables $(\bm{x}, \bm{y}, \bm{z})$ and ADMM variables $(\bm{u}, \bm{v}, \bm{w})$ is one-to-one, meaning $m=n=p$ and both $A$ and $B$ are invertible.
      
      Under the first condition, Theorem \ref{Cov:DRS} establishes the local smoothness of the mappings $R_{\beta}$ and $S_{\beta}$, further demonstrating the local smoothness of the entire mapping $(\bm{x}^k, \bm{y}^k, \bm{z}^k) \overset{\text{DRS}}\longmapsto (\bm{x}^{k+1}, \bm{y}^{k+1}, \bm{z}^{k+1})$ as defined in \eqref{Douglas-Rachford splitting}. The second condition guarantees a one-to-one correspondence between $(\bm{x}, \bm{y}, \bm{z})$ and $(\bm{u}, \bm{v}, \bm{w})$, establishing that the mapping $(\bm{u}^k, \bm{v}^k, \bm{w}^k) \overset{\text{ADMM}}\longmapsto (\bm{u}^{k+1}, \bm{v}^{k+1}, \bm{w}^{k+1})$ is smooth. Using similar reasoning, we can also prove the local convergence rate of Anderson acceleration for $(\bm{u}^k, \bm{v}^k, \bm{w}^k) \overset{\text{ADMM}}\longmapsto (\bm{u}^{k+1}, \bm{v}^{k+1}, \bm{w}^{k+1})$.
      \end{remark}
      
      \section{\texorpdfstring{Anderson-accelerated iteratively reweighted $\ell_1$ algorithm}{Anderson-accelerated iteratively reweighted l1 algorithm}}\label{Sec:AAIRL1}
      In the previous sections, we established the local smoothness of iteration mappings for various algorithms within the context of weakly convex nonsmooth functions. In this section, we shift our focus to a class of nonconvex regularized optimization problems and the corresponding algorithm, the iteratively reweighted $\ell_1$ algorithm (IRL1) \cite{gong2013general,lu2014iterative}. We will demonstrate the local smoothness of the iteration mapping of IRL1 and confirm the local linear convergence rate of Anderson acceleration for this algorithm.

      Consider the following nonconvex regularized optimization problem:
\begin{equation}
    \label{equation1}
    \setlength{\abovedisplayskip}{8pt}   % 上方间距
    \setlength{\belowdisplayskip}{5pt}   % 下方间距
    \min\limits _{\bm{x}\in \mathbb{R}^n} \:F(\bm{x}):=f(\bm{x})+  \lambda \displaystyle\sum^n_{i=1} \phi (\vert x_i\vert),
\end{equation}
where $f: \mathbb{R}^n \to \mathbb{R}$ is Lipschitz continuously differentiable, $\lambda > 0$, and $\phi:\mathbb{R}_+ \to \mathbb{R}_+$ is a nonsmooth regularization function.
%Problem \eqref{equation1} arises from diverse fields including signal processing  \cite{philiastides2006temporal,tropp2006just}, biomedical informatics  \cite{tsuruoka2007learning,liao2007logistic} to modern machine learning \cite{mairal2010online,scardapane2017group}. 
% The primary goal of these problems is to derive solutions with the maximum number of zero elements. Nonconvex and nonsmooth regularization is a prevalent tool for inducing sparsity, serving as an approximation of $\ell_0$ regularization with the common problem form \eqref{equation1}.  
The function $\phi$ takes various forms, such as the EXP approximation \cite{bradley1998feature}, LPN approximation \cite{fazel2003log}, LOG approximation \cite{lobo2007portfolio}, FRA approximation \cite{fazel2003log}, and TAN approximation \cite{candes2008enhancing}. Table \ref{tab1} presents the explicit forms for these cases. $p$ is a hyperparameter that controls sparsity. In this section, we impose the following assumptions on $F$ defined in \eqref{equation1}.  
\begin{table}[htbp]
    \vspace{0pt}
    \renewcommand\arraystretch{1}
    \begin{center}
    \begin{tabular}{@{}llllll@{}}
    \hline
      & EXP \cite{bradley1998feature} & LPN \cite{fazel2003log} & LOG \cite{lobo2007portfolio} & FRA \cite{fazel2003log} & TAN \cite{candes2008enhancing} \\
    \hline
    $p$ & $(0,+\infty)$ & $(0,1)$ & $(0,+\infty)$ & $(0,+\infty)$ & $(0,+\infty)$ \\
    $\phi(|x_i|)$ & $1-e^{-p|x_i|}$ & $|x|^p_i$ & $\log(1+p|x_i|)$ & $\frac{|x_i|}{|x_i|+p}$ & ${\rm arctan}(p|x_i|)$ \\
    \hline
    \end{tabular}
    \end{center}
    \vspace{-7pt}
    \caption{Different functions $\phi(|x_i|)$} \label{tab1}
    \vspace{-17pt}
\end{table}
% \begin{table}[htbp]
%     \vspace{-1pt}
%     \renewcommand\arraystretch{1.5}
%     %\scalebox{1.5}{
%       \begin{center}
%     \begin{tabular}{@{}llll@{}}
%     %\begin{tabular}{@{}lll@{}}
%     \hline
%       & $p$ & $\phi(|x_i|)$  & $\tilde{\omega}_i$  \\
%     \hline
    
%     EXP  \cite{bradley1998feature} & $(0,+\infty)$    & $1-e^{-p|x_i|}$   & $ pe^{-p(\|x_i\|_1)}$  \\

%     LPN  \cite{fazel2003log} & $(0,1)$   &  $|x|^p_i$   & $p(\|x_i\|_1+\tilde{\epsilon}_i)^{p-1}$ \\
        
%     LOG  \cite{lobo2007portfolio} & $(0,+\infty)$    & $\log(1+p|x_i|)$   & $\frac{p}{1+p\|x_i\|_1} $ \\

%     FRA  \cite{fazel2003log}  & $(0,+\infty)$    & $\frac{|x_i|}{|x_i|+p}$   & $\frac{p}{(\|x_i\|_1+p)^2} $  \\

%     TAN  \cite{candes2008enhancing}  & $(0,+\infty)$     & ${\rm arctan}(p|x_i|)$   & $\frac{p}{1+p^2(\|x_i\|_1)^2} $ \\
%     \hline
%     \end{tabular}
%   \end{center}
%    \caption{Different functions $\phi(|x_i|)$ \red{the column of $\tilde \omega_i$ seems unnecessary.}} \label{tab1}
% \end{table}
\begin{assumption}\label{Assumption1} 
    \begin{itemize}
        \item [(i)] $f$ has $L_f$-Lipschitz continuous gradients.

        \item [(ii)]  $\phi$ is smooth, concave, and strictly increasing on $(0, +\infty)$, with $\phi(0) = 0$.
            
        \item [(iii)]   $F$ is non-degenerate in the sense that for any critical point $\bm{x}^*$ of $F$, it holds that $\bm{0} \in \mathop{{\rm rint}}\partial F(\bm{x}^*)$.
    \end{itemize}
\end{assumption}

For problem \eqref{equation1}, we provide a more intuitive derivation using non-degeneracy conditions to demonstrate that IRL1 possesses the property of active manifold identification. For the LPN approximation, the non-degeneracy condition naturally holds because \( \lambda\phi'(0^+) = \lambda p(0^+)^{p-1} = \infty \), which significantly dominates \( \nabla_i f(0^+) \) in the optimality condition. For other sparse approximations, the values of \( \lambda \) and \( p \) are typically adjusted to ensure sparsity, which requires either \( \lambda \) or \( \phi'(0^+) \) to be sufficiently large to satisfy the non-degeneracy condition.

To address the nonsmoothness of the objective function in \eqref{equation1}, a common technique in IRL1 involves adding a perturbation vector $\bm{\epsilon} \in \mathbb{R}^n_{+}$, leading to a continuously differentiable function:
$$
    \setlength{\abovedisplayskip}{8pt}   % 上方间距
    \setlength{\belowdisplayskip}{5pt}   % 下方间距
    F(\bm{x}, \bm{\epsilon}) := f(\bm{x}) + \lambda \sum_{i=1}^n \phi(|x_i| + \epsilon_i).
$$
Under Assumption \ref{Assumption1}, for any $\beta \in (0, L_f^{-1})$ and for any $\bm{x}$ near $\bm{x}^k$, the following holds:
$$
    \setlength{\abovedisplayskip}{8pt}   % 上方间距
    \setlength{\belowdisplayskip}{5pt}   % 下方间距
    F(\bm{x}, \bm{\epsilon}) \leq f(\bm{x}^k) + \nabla f(\bm{x}^k)^T (\bm{x} - \bm{x}^k) + \frac{1}{2\beta} \|\bm{x} - \bm{x}^k\|_2^2 + \lambda \sum_{i=1}^n \phi'(|x_i^k| + \epsilon_i^k)(|x_i| - |x_i^k|).
$$
Consequently, in the $k$-th iteration of IRL1, a convex subproblem that locally approximates $F$ is solved to obtain the new iterate $\bm{x}^{k+1}$:
\begin{equation}\label{equation3}
    \setlength{\abovedisplayskip}{8pt}   % 上方间距
    \setlength{\belowdisplayskip}{5pt}   % 下方间距
    \bm{x}^{k+1} = \mathop{\arg\min}_{\bm{x} \in \mathbb{R}^n} G_k(\bm{x}) := Q_k(\bm{x}) + \lambda \sum_{i=1}^n \omega_i^k |x_i|,
\end{equation}
where $\omega_i^k := \phi'(|x_i^k| + \epsilon_i^k)$ and $Q_k(\bm{x}) := \nabla f(\bm{x}^k)^T \bm{x} + \frac{1}{2\beta} \|\bm{x} - \bm{x}^k\|^2.$
% Denote $W^k:=\mathop{{\rm diag}}(\omega^k_1,...\omega^k_n)$. 
% The ﬁrst-order necessary optimality condition of the subproblem (\ref{equation3}) is given as follows
% \begin{equation}
%     \begin{aligned} \label{equation11}
%         \nabla Q_k(\bm{x}^{k+1}) + \lambda W^k \xi^k = 0,
% \end{aligned}
% \end{equation}
% where $\xi^k \in \partial \|\bm{x}^{k+1}\|_1$. 
The subproblem \eqref{equation3} has a closed-form solution:
\begin{equation}\label{equation6}
    \setlength{\abovedisplayskip}{8pt}   % 上方间距
    \setlength{\belowdisplayskip}{5pt}   % 下方间距
	x^{k+1}_i =
	\begin{cases}
		x^k_i -\beta\nabla_i f(\bm{x}^k) + \beta\lambda \omega^k_i   & \text{if } x^k_i -\beta\nabla_i f(\bm{x}^k) < -\beta\lambda \omega^k_i,\\
  
        x^k_i -\beta\nabla_i f(\bm{x}^k) -\beta\lambda \omega^k_i   & \text{if } x^k_i -\beta\nabla_i f(\bm{x}^k) > \beta\lambda \omega^k_i,\\
		0   & \text{otherwise}.
	\end{cases}
\end{equation}
In the IRL1 algorithm, the choice of $\bm{\epsilon}$ significantly impacts performance. Large $\bm{\epsilon}$ smoothes out overlook many local minimizers. while small $\bm{\epsilon}$ make the subproblem difficult to solve due to bad minimizers. A common strategy is to initialize the algorithm with a relatively large $\bm{\epsilon}^0$ and drive it toward $0$ during successive iterations. Dynamic update schemes for $\bm{\epsilon}$ have been proposed by Wang et al. \cite{wang2021relating} and Lu \cite{lu2014iterative}. % Additionally, Wang et al. \cite{wang2021relating} highlight the locally stable sign property of the sequence $\{\bm{x}^k\}$ generated by IRL1. This indicates that $\textrm{sign}(\bm{x}^k)$ remains unchanged for sufficiently large $k$.

% Additionally, Wang et al. \cite{wang2021relating} highlight the locally stable sign property of the sequence $\{\bm{x}^k\}$ generated by IRL1. This indicates that $\textrm{sign}(\bm{x}^k)$ remains unchanged for sufficiently large $k$.
In this paper, we adopt the update scheme proposed by these authors:
\begin{equation}\label{eps-upd}
    \setlength{\abovedisplayskip}{8pt}   % 上方间距
    \setlength{\belowdisplayskip}{5pt}   % 下方间距
    \bm{\epsilon}^{k+1} = \mu \bm{\epsilon}^{k}
\end{equation}
in IRL1, where $\mu \in (0,1)$ controls the decay rate. By introducing
\begin{equation}\label{H}
    \setlength{\abovedisplayskip}{8pt}   % 上方间距
    \setlength{\belowdisplayskip}{5pt}   % 下方间距
    \begin{aligned}
    \bm{\theta} := \begin{bmatrix}
    \bm{x} \\ 
    \bm{\epsilon}
    \end{bmatrix} \quad \text{and} \quad H: \mathbb{R}^{2n} \to \mathbb{R}^{2n} \text{ defined as } H(\bm{\theta}) := \begin{bmatrix}
                \arg\min_{\bm{y} \in \mathbb{R}^n} G_k(\bm{y})  \\
                \mu \bm{\epsilon}
                \end{bmatrix},
    \end{aligned}
\end{equation}
we can represent the iteration of IRL1, i.e. \eqref{equation3} and \eqref{eps-upd}, as a fixed-point iteration ${\bm \theta}^{k+1} = H(\bm \theta^k).$ %  of the compound variable $\bm \theta:=(\bm{x},\bm{\epsilon})$, that is 
	% $
       % \left[\begin{array}{c}
        %    \bm{x}^{k+1}  \\
        %    \bm{\epsilon}^{k+1}
        %    \end{array}\right]
          %:= 
       % \left[\begin{array}{c}
        %    H_{\bm{x}}(\bm{x}^k,\bm{\epsilon}^k)  \\
         %   H_{\bm{\epsilon}}(\bm{x}^k,\bm{\epsilon}^k)
         %   \end{array}\right]
         %   = 
       % \left[\begin{array}{c}
        %    \arg\min_{\bm{x} \in\mathbb{R}^n} G_k(\bm{x};\bm{x}^k,\bm{\epsilon}^k)  \\
        %    \mu\bm{\bm{\epsilon}}^{k} 
        %    \end{array}\right].
% $ 
Denote $\bm \theta^* = [\bm x^* ; \bm 0].$ For simplicity, we introduce $H(\bm \theta)=[H_1(\bm \theta); H_2(\bm \theta)]$ with mappings $H_1:\mathbb R^{2n}\to \mathbb \R^n$ and $H_2:\R^{2n}\to \R^n$ defined by 
$$
    \setlength{\abovedisplayskip}{8pt}   % 上方间距
    \setlength{\belowdisplayskip}{5pt}   % 下方间距
    H_1(\bm{\theta}) = \arg\min_{\bm{y} \in\mathbb{R}^n} G_k(\bm{y}), \quad H_2(\bm{\theta})= \mu \bm{\epsilon}.
$$
We proceed to analyze the local smoothness of $H$. Denote $\mathcal{A}^* := \{i \mid x^*_i = 0\}$ and $\mathcal{I}^* := \{i \mid x^*_i \neq 0\}$. A notable property of IRL1 for \eqref{equation1} is the locally stable sign property of the iterates $\{\bm{x}^k\}$, meaning that $\textrm{sign}(\bm{x}^k)$ remains unchanged for sufficiently large $k$ \cite{wang2021relating}. Towards the end of the iteration, IRL1 is equivalent to solving a smooth problem in the reduced space $\mathbb{R}^{\mathcal{I}^*}$, a property known as model identification. We generalize this concept to the general nonconvex regularized problem \eqref{equation1} by leveraging the active manifold identification property, which can be established based on the non-degeneracy condition required by Assumption \ref{Assumption1}. We then infer the local smoothness of $H$.

\begin{lemma}[Active manifold identification and local smoothness]\label{Lemma3} 
    Let Assumption \ref{Assumption1} hold. There exists a neighborhood $\mathcal{B}_{\bm{\theta^*}}(\rho)$ of $\bm{\theta}^*$ such that for all $\bm{\theta} \in \mathcal{B}_{\bm{\theta^*}}(\rho)$, the following statements hold:
	\begin{itemize}
        \item [{\rm (i)}] $H(\bm{\theta}) \in \mathcal{M}$, where $\mathcal{M} := \left\{\bm{\theta} = [\bm{x}; \bm{\epsilon}] \, \big| \, \textrm{sign}(\bm{x}) = \textrm{sign}(\bm{x}^*), \bm{\epsilon} \in \mathbb{R}^n_+\right\}.$
        \item [{\rm (ii)}] $H(\bm{\theta})$ is continuously differentiable. 
    \end{itemize}
\end{lemma}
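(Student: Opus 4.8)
The plan is to read both conclusions directly off the closed-form update \eqref{equation6}, viewing each coordinate $[H_1(\bm{\theta})]_i$ as a soft-thresholding of $x_i-\beta\nabla_i f(\bm{x})$ with the data-dependent threshold $\beta\lambda\omega_i$, $\omega_i=\phi'(|x_i|+\epsilon_i)$, and showing that the non-degeneracy hypothesis forces a single branch of \eqref{equation6} to be active throughout a neighborhood of $\bm{\theta}^*$. First I would translate Assumption \ref{Assumption1}(iii), $\bm{0}\in\mathrm{rint}\,\partial F(\bm{x}^*)$, into coordinatewise conditions. Since $F=f+\lambda\sum_i\phi(|\cdot|)$ with $f$ smooth, $\partial F(\bm{x}^*)$ is a box whose $i$-th factor is the singleton $\{\nabla_i f(\bm{x}^*)+\lambda\phi'(|x_i^*|)\mathrm{sign}(x_i^*)\}$ for $i\in\mathcal{I}^*$ and the interval $\nabla_i f(\bm{x}^*)+\lambda[-\phi'(0^+),\phi'(0^+)]$ for $i\in\mathcal{A}^*$. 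Taking relative interiors, $\bm{0}\in\mathrm{rint}\,\partial F(\bm{x}^*)$ becomes the exact stationarity $\nabla_i f(\bm{x}^*)=-\lambda\phi'(|x_i^*|)\mathrm{sign}(x_i^*)$ on $\mathcal{I}^*$ together with the strict inequality $|\nabla_i f(\bm{x}^*)|<\lambda\phi'(0^+)$ on $\mathcal{A}^*$ (automatic when $\phi'(0^+)=\infty$, as for LPN).

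For part (i) I would evaluate the branch conditions of \eqref{equation6} at $\bm{\theta}^*$ and then invoke continuity. For $i\in\mathcal{I}^*$, substituting the stationarity identity gives $x_i^*-\beta\nabla_i f(\bm{x}^*)=x_i^*+\beta\lambda\omega_i^*\mathrm{sign}(x_i^*)$, so $|x_i^*-\beta\nabla_i f(\bm{x}^*)|=|x_i^*|+\beta\lambda\omega_i^*>\beta\lambda\omega_i^*$ strictly, placing coordinate $i$ in one of the two nonzero branches with output sign exactly $\mathrm{sign}(x_i^*)$. For $i\in\mathcal{A}^*$ the strict non-degeneracy inequality reads $|x_i^*-\beta\nabla_i f(\bm{x}^*)|=\beta|\nabla_i f(\bm{x}^*)|<\beta\lambda\phi'(0^+)$, placing $i$ strictly inside the ``otherwise'' branch, where $[H_1]_i=0$. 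Since $\nabla f$, the maps $x_i\mapsto|x_i|$ on $\mathcal{I}^*$, and $\phi'$ are continuous near the relevant points, these strict inequalities persist on a small ball $\mathcal{B}_{\bm{\theta}^*}(\rho)$, so the active branch, and hence $\mathrm{sign}([H_1(\bm{\theta})]_i)=\mathrm{sign}(x_i^*)$ for every $i$, is constant there; combined with $H_2(\bm{\theta})=\mu\bm{\epsilon}\in\mathbb{R}^n_+$ this yields $H(\bm{\theta})\in\mathcal{M}$.

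For part (ii) I would exploit that, on $\mathcal{B}_{\bm{\theta}^*}(\rho)$, each coordinate of $H_1$ is governed by a \emph{single} smooth branch of \eqref{equation6}. For $i\in\mathcal{A}^*$ the component is identically $0$; for $i\in\mathcal{I}^*$ it equals $x_i-\beta\nabla_i f(\bm{x})\mp\beta\lambda\phi'(|x_i|+\epsilon_i)$, where the kink of $|x_i|$ at $0$ and the singularity of $\phi'$ at the origin are both avoided because $x_i$ stays near $x_i^*\neq 0$ and $|x_i|+\epsilon_i$ stays bounded away from $0$. Each such component is thus a composition of continuously differentiable maps (using $\phi'\in\mathcal{C}^1$ on $(0,\infty)$ and the smoothness of $\nabla f$), and $H_2=\mu\bm{\epsilon}$ is linear, so $H$ is continuously differentiable on the ball.

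I expect the $\mathcal{A}^*$ coordinates to be the main obstacle, and specifically the need to treat $\phi'(0^+)=\infty$ uniformly. When $\phi'(0^+)$ is finite, branch stability on $\mathcal{A}^*$ is a routine continuity argument; when $\phi'(0^+)=\infty$ the threshold $\beta\lambda\phi'(|x_i|+\epsilon_i)$ itself blows up as $\bm{\theta}\to\bm{\theta}^*$, so I would instead use monotonicity of $\phi'$ (from concavity of $\phi$) to lower-bound the threshold by $\beta\lambda\phi'(2\rho)$ on the ball and shrink $\rho$ until this dominates the bounded quantity $|x_i-\beta\nabla_i f(\bm{x})|$, keeping coordinate $i$ in the zero branch without ever evaluating $\phi'$ at $0$. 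As a final bookkeeping point, the $\mathcal{C}^1$ conclusion for the $\mathcal{I}^*$ components uses that $\nabla f$ is itself continuously differentiable; under only a Lipschitz gradient the same closed-form argument still delivers a Lipschitz $H$ with the active branch fixed, which is the regularity fed into Theorem \ref{Convergence AA}.
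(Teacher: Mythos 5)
Your proposal is correct and follows essentially the same route as the paper: evaluate the branch conditions of the closed-form update \eqref{equation6} at $\bm{\theta}^*$ using the non-degeneracy condition, propagate the strict inequalities to a neighborhood by continuity so that a single branch stays active, and read off both the sign stability (i) and the single-branch smoothness (ii). You are in fact more careful than the paper in two spots---the uniform handling of $\phi'(0^+)=\infty$ via monotonicity of $\phi'$, and the bookkeeping remark that $\mathcal{C}^1$ regularity of $H$ on the $\mathcal{I}^*$ coordinates implicitly requires $\nabla f$ itself to be continuously differentiable (Lipschitz gradients alone yield only a Lipschitz $H$)---both of which the paper's terser argument glosses over.
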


\begin{proof}
(i) It is straightforward to observe that $H_2(\bm{\theta}) = \mu \bm{\epsilon} \in \mathbb{R}^n_+$. 

To prove that $\textrm{sign}(H_1(\bm{\theta})) = \textrm{sign}(\bm{x}^*)$, consider $i \in \mathcal{A}^*$. The non-degeneracy condition $ -\nabla_i f(\bm{x}^*) \in \mathrm{rint}(\lambda \partial \phi(x^*_i))$ and the symmetry of $\phi$ imply that $|\nabla_i f(\bm{x}^*)| < |\lambda \omega(\theta^*_i)|$. Since $x^*_i = 0$, we have $|x^*_i - \beta \nabla_i f(\bm{x}^*)| < |\beta \lambda \omega(\theta^*_i)|$. Combining this with the continuity of $F$, we deduce that there exists a sufficiently small constant $\rho > 0$ such that for all $\bm{\theta} \in \mathcal{B}_{\bm{\theta^*}}(\rho)$, $|x_i - \beta \nabla_i f(\bm{x})| < |\beta \lambda \omega(\theta_i)|$. Therefore, $[H_1(\bm{\theta})]_i = 0$, which is also evident from \eqref{equation6}. Then we deduce that $\textrm{sign}([H_1(\bm{\theta})]_i) = \textrm{sign}(x^*_i)$. For $i \in \mathcal{I}^*$, the continuity of $H$ implies that there exists a sufficiently small constant $\rho > 0$ such that for all $\bm{\theta} \in \mathcal{B}_{\bm{\theta^*}}(\rho)$, it holds that $\textrm{sign}(x_i) = \textrm{sign}(x_i^*)$ and $\textrm{sign}([H_1(\bm{\theta})]_i) = \textrm{sign}(x_i^*)$.

(ii) Given \( \bm{\theta} \in \mathcal{B}_{\bm{\theta^*}}(\rho) \) and \( \textrm{sign}([H(\bm{\theta})]_i) = \textrm{sign}(x^*_i) \), it follows that \( H(\bm{\theta}) \) is continuously differentiable in \( \mathcal{B}_{\bm{\theta^*}}(\rho) \).
\end{proof}

In the context of IRL1, we provide a specific analysis demonstrating that $H$ is both a contraction and Lipschitz continuously differentiable. Let $\Phi := \sum_{i=1}^n \phi$. We introduce the following assumption for the subsequent analysis.

\begin{assumption}\label{Assumption3}
    Let $\bm{\theta}^* = [\bm{x}^*; 0]$ be the fixed point of $H$, and let $\mathcal{B}_{\bm{\theta^*}}(\rho)$ denote a neighborhood around $\bm{\theta}^*$. For all $\bm{\theta} \in \mathcal{B}_{\bm{\theta^*}}(\rho)$, the following conditions hold:
    \begin{itemize}
        \item[{\rm (i)}] $[\nabla^2_{\bm{xx}}F(\bm{\theta})]_{\mathcal{I}^*} \succeq \kappa I$ for some $\kappa > 0$, and $[\nabla^2_{\bm{xx}}F(\bm{\theta})]_{\mathcal{I}^*}$ is Lipschitz continuous.
        \item[{\rm (ii)}] $[\nabla^2_{\bm{x\epsilon}} \Phi (\bm{\theta})]_{\mathcal{I}^*}$ is Lipschitz continuous with a constant $L_w > 0$.
    \end{itemize}
\end{assumption}

In the following theorem, we show that $H(\bm{\theta})$ is a contraction and Lipschitz continuously differentiable when $\bm{\theta}$ is near $\bm{\theta}^*$, and we establish the local convergence rate of the Anderson-accelerated IRL1.

\begin{theorem}%[Lipschitz and contraction]
\label{Lemma4} Consider the problem \eqref{equation1}. Let Assumptions \ref{Assumption1} and \ref{Assumption3} hold, $\bm{x}^*$ be a critical point of $F$, and $H$ be defined in \eqref{H} with $\mu \in (0, 1 - \sqrt{\lambda^2 L^2_{\omega}/(\kappa^2 + \lambda^2 L^2_{\omega})})$ and $\beta \in (\underline{\beta}, \overline{\beta})$, where $\underline{\beta}, \overline{\beta}$ are the two roots of 
\begin{equation}\label{Bound of beta}
    \setlength{\abovedisplayskip}{8pt}   % 上方间距
    \setlength{\belowdisplayskip}{5pt}   % 下方间距
    (\kappa^2 + \lambda^2 L^2_{\omega}) \beta^2 - 2\kappa \beta + 2\mu - \mu^2 = 0.
\end{equation}
Then the following statements hold:
\begin{itemize}
    \item[{\rm (i)}] $H(\bm{\theta})$ is Lipschitz continuously differentiable and a contraction for all $\bm{\theta} \in \mathcal{B}_{\bm{\theta^*}}(\rho)$, i.e., there exists $\gamma \in (0, 1)$ such that
    \begin{equation}\label{equation22}
        \setlength{\abovedisplayskip}{8pt}   % 上方间距
        \setlength{\belowdisplayskip}{5pt}   % 下方间距
        \|H(\bm{\theta}_1) - H(\bm{\theta}_2)\| \leq \gamma \|\bm{\theta}_1 - \bm{\theta}_2\|, \quad \forall \bm{\theta}_1, \bm{\theta}_2 \in \mathcal{B}_{\bm{\theta^*}}(\rho).
    \end{equation}
    \item[{\rm (ii)}] If the initial point $\bm{\theta}^0$ is sufficiently close to $\bm{\theta}^*$ and Assumption \ref{Assumption AA}(ii) holds, when applying Anderson acceleration to $H$ defined in \eqref{H} following Algorithm \ref{AA fixed-point}, the generated iterates converge to $\bm{\theta}^*$ R-linearly with $\hat{\gamma} \in (\gamma, 1)$, i.e., \eqref{cov-rate} holds.
\end{itemize}
\end{theorem}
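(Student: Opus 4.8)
The plan is to establish part (i) by explicitly computing the Jacobian $\nabla H$ near $\bm{\theta}^*$ and bounding its spectral norm, and then to deduce part (ii) directly from Theorem \ref{Convergence AA}.

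I would start from Lemma \ref{Lemma3}, which freezes the sign pattern: for $\bm{\theta} \in \mathcal{B}_{\bm{\theta}^*}(\rho)$ one has $\mathrm{sign}(H_1(\bm{\theta})) = \mathrm{sign}(\bm{x}^*)$, so the piecewise formula \eqref{equation6} becomes smooth. Writing $s_i := \mathrm{sign}(x_i^*)$, for $i \in \mathcal{I}^*$ we have $[H_1(\bm{\theta})]_i = x_i - \beta\nabla_i f(\bm{x}) - \beta\lambda\phi'(|x_i|+\epsilon_i)s_i$, for $i \in \mathcal{A}^*$ we have $[H_1(\bm{\theta})]_i = 0$, and $H_2(\bm{\theta}) = \mu\bm{\epsilon}$. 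Differentiating, and using $s_i^2 = 1$ together with $[\nabla^2_{\bm{xx}}F]_{ij} = \nabla^2_{ij}f + \lambda\phi''(|x_i|+\epsilon_i)\delta_{ij}$, the nonzero blocks of $\nabla H$ are: $I - \beta[\nabla^2_{\bm{xx}}F]_{\mathcal{I}^*}$ (the $\bm{x}_{\mathcal{I}^*}$-derivatives of $[H_1]_{\mathcal{I}^*}$), $-\beta\lambda[\nabla^2_{\bm{x\epsilon}}\Phi]_{\mathcal{I}^*}$ (the $\bm{\epsilon}$-derivatives of $[H_1]_{\mathcal{I}^*}$), and $\mu I$ (the $\bm{\epsilon}$-derivatives of $H_2$); the rows $[H_1]_{\mathcal{A}^*}$ vanish. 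This already gives the $\mathcal{C}^1$-smoothness of $H$, and the Lipschitz continuity of $\nabla H$ follows from the Lipschitz hypotheses on $[\nabla^2_{\bm{xx}}F]_{\mathcal{I}^*}$ and $[\nabla^2_{\bm{x\epsilon}}\Phi]_{\mathcal{I}^*}$ in Assumption \ref{Assumption3}.

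Next I would bound the contraction factor. Set $a := \|I - \beta[\nabla^2_{\bm{xx}}F]_{\mathcal{I}^*}\|$ and $b := \beta\lambda\|[\nabla^2_{\bm{x\epsilon}}\Phi]_{\mathcal{I}^*}\|$; Assumption \ref{Assumption3}(i) yields $a \le 1-\beta\kappa$ (for $\beta$ in the admissible range, where $0 \preceq I - \beta[\nabla^2_{\bm{xx}}F]_{\mathcal{I}^*} \preceq (1-\beta\kappa)I$) and Assumption \ref{Assumption3}(ii) yields $b \le \beta\lambda L_\omega$. Applying the block-row inequality $\|[P \mid Q]\|^2 \le \|P\|^2 + \|Q\|^2$ to the Jacobian of $[H_1]_{\mathcal{I}^*}$ and noting that the $H_2$-block decouples as $\mu I$, I obtain $\|H(\bm{\theta}_1) - H(\bm{\theta}_2)\|^2 \le (a^2 + b^2 + \mu^2)\|\bm{\theta}_1 - \bm{\theta}_2\|^2$, i.e., a contraction with $\gamma = \sqrt{(1-\beta\kappa)^2 + (\beta\lambda L_\omega)^2 + \mu^2}$. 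The role of \eqref{Bound of beta} then becomes transparent once it is rewritten as $(1-\beta\kappa)^2 + (\beta\lambda L_\omega)^2 = (1-\mu)^2$: its two roots $\underline{\beta}, \overline{\beta}$ bracket precisely the interval on which $(1-\beta\kappa)^2 + (\beta\lambda L_\omega)^2 < (1-\mu)^2$, so for $\beta \in (\underline{\beta}, \overline{\beta})$ one gets $\gamma^2 < (1-\mu)^2 + \mu^2 = 1 - 2\mu(1-\mu) < 1$; the prescribed range of $\mu$ is exactly the nonnegative-discriminant condition guaranteeing that these two real roots exist. This proves \eqref{equation22}.

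The step I expect to be the main obstacle is the coupling between active and inactive coordinates: differentiating $\nabla_i f(\bm{x})$ for $i \in \mathcal{I}^*$ with respect to $\bm{x}_{\mathcal{A}^*}$ produces an off-diagonal Hessian block $-\beta[\nabla^2 f]_{\mathcal{I}^*,\mathcal{A}^*}$ that is absent from the clean constant above, and including it in a full-ball operator-norm bound would perturb the $\beta$-range. I would handle this by exploiting the identification property of Lemma \ref{Lemma3}: every $H(\bm{\theta})$ lies in $\mathcal{M} = \{[\bm{x};\bm{\epsilon}] : \mathrm{sign}(\bm{x}) = \mathrm{sign}(\bm{x}^*),\, \bm{\epsilon} \ge \bm{0}\}$, on which $\bm{x}_{\mathcal{A}^*} = \bm{0}$, and since each Anderson iterate is an affine combination $\sum_i \alpha_i^k H^{k-m_k+i}$ of such points, its $\mathcal{A}^*$-block also vanishes; thus along the iteration the off-diagonal direction is never excited and the effective contraction is governed by the reduced constant $\gamma$ above. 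Finally, part (ii) is immediate: part (i) supplies a Lipschitz continuously differentiable $H$ with $\|\nabla H\| \le \gamma < 1$ near $\bm{\theta}^*$, so Assumption \ref{Assumption AA}(i) holds, and together with Assumption \ref{Assumption AA}(ii), Theorem \ref{Convergence AA} delivers the R-linear rate \eqref{cov-rate}.
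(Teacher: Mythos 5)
Your proposal follows essentially the same route as the paper's proof: compute the block Jacobian of $H$ under the frozen sign pattern from Lemma \ref{Lemma3}, bound its operator norm via Assumption \ref{Assumption3}, use the quadratic \eqref{Bound of beta} to push that bound below one, and invoke Theorem \ref{Convergence AA} for part (ii). Two differences are worth recording. First, you aggregate the blocks as $\gamma^2 \le (1-\beta\kappa)^2 + \beta^2\lambda^2 L_\omega^2 + \mu^2$ (sum of squares), whereas the paper uses the triangle inequality $\gamma \le \sqrt{(1-\beta\kappa)^2+\beta^2\lambda^2 L_\omega^2} + \mu$; the range $(\underline{\beta},\overline{\beta})$ is exactly the set where the paper's bound is below one, while for your tighter bound it is merely sufficient, giving $\gamma^2 < (1-\mu)^2+\mu^2 = 1-2\mu(1-\mu)<1$ — both conclusions are valid on the stated parameter range. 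Second, and more substantively, your concern about the cross block is legitimate: the paper's formula \eqref{nabla H (theta)} silently sets $\partial [H_1]_{\mathcal{I}^*}/\partial \bm{x}_{\mathcal{A}^*} = 0$, which is false in general since $\nabla_i f(\bm{x})$ for $i\in\mathcal{I}^*$ depends on $\bm{x}_{\mathcal{A}^*}$, producing a block $-\beta[\nabla^2 f]_{\mathcal{I}^*,\mathcal{A}^*}$ that Assumption \ref{Assumption3} does not control. Your patch — observing that $H$ maps the whole ball into $\{\bm{x}_{\mathcal{A}^*}=\bm{0}\}$ by Lemma \ref{Lemma3}(i), that Anderson iterates are affine combinations of such points and hence stay in this subspace, and that the compressed Jacobian on that subspace has exactly the clean block structure — is sound and repairs a gap the paper glosses over, at the cost of establishing the contraction \eqref{equation22} only on the identified subspace rather than on the full ball; that weaker statement is all that part (ii) requires, and in this respect your argument is more careful than the paper's own.
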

\begin{proof} (i) First, we can derive that 
\begin{equation}\label{nabla H (theta)}
    \setlength{\abovedisplayskip}{8pt}   % 上方间距
    \setlength{\belowdisplayskip}{5pt}   % 下方间距
    \begin{aligned}
        \nabla H = \left[\begin{array}{c|c}
        \begin{matrix}
        [\nabla_{\bm{x}} H_1]_{\mathcal{I}^*} \\
        [\nabla_{\bm{x}} H_1]_{\mathcal{A}^*} \\
        \end{matrix} & \nabla_{\bm{x}} H_2 \\
        \hline
        \begin{matrix}
        [\nabla_{\bm{\epsilon}} H_1]_{\mathcal{I}^*} \\
        [\nabla_{\bm{\epsilon}} H_1]_{\mathcal{A}^*} \\
        \end{matrix} & \nabla_{\bm{\epsilon}} H_2 \\
        \end{array}\right]^T
         = \left[\begin{array}{c|c}
        \begin{array}{c}
        I - \beta[\nabla^2_{\bm{xx}}F]_{\mathcal{I}^*} \\
        0 \\
        \end{array} & 0 \\
        \hline
        \begin{array}{c}
        -\beta\lambda[\nabla^2_{\bm{x\epsilon}} \Phi ]_{\mathcal{I}^*}\\
        0 \\
        \end{array} & \mu I \\
        \end{array}\right]^T.
    \end{aligned}
\end{equation}
From the Lipschitz continuity of $[\nabla^2_{\bm{xx}}F]_{\mathcal{I}^*}$ and $[\nabla^2_{\bm{x\epsilon}} \Phi ]_{\mathcal{I}^*}$ as stated in Assumption \ref{Assumption3}, we can deduce that $\nabla H$ is Lipschitz continuous. Additionally, with 
$\|[\nabla^2_{\bm{xx}}F]_{\mathcal{I}^*}  \| \geq \kappa $ and $\|[\nabla^2_{\bm{x\epsilon}} \Phi ]_{\mathcal{I}^*}  \| \leq L_{\omega}$, we further obtain 
\begin{equation}\label{Bound of H1}
    \setlength{\abovedisplayskip}{8pt}   % 上方间距
    \setlength{\belowdisplayskip}{5pt}   % 下方间距
    \|\nabla_{\bm{x}} H_1\| \leq 1 - \beta \kappa \quad \mbox{and} \quad \|\nabla_{\bm{\epsilon}} H_1\| \leq \beta \lambda L_{\omega}.
\end{equation}
From \eqref{nabla H (theta)} we have 
\begin{equation}
    \setlength{\abovedisplayskip}{8pt}   % 上方间距
    \setlength{\belowdisplayskip}{5pt}   % 下方间距
    \begin{aligned} \label{equation11}
        \|\nabla H\| &\leq \|\nabla H_1\| + \|\nabla H_2\| = \sqrt{\|\nabla H_1[\nabla H_1]^T\|} + \|\nabla H_2\|  \\
        &= \sqrt{\|\nabla_{\bm{x}} H_1[\nabla_{\bm{x}} H_1]^T + \nabla_{\bm{\epsilon}} H_1[\nabla_{\bm{\epsilon}} H_1]^T\| } + \|\nabla H_2\| \\
        &\leq \sqrt{\|\nabla_{\bm{x}} H_1\|^2 + \|\nabla_{\bm{\epsilon}} H_1\|^2} + \|\nabla H_2\| \leq \sqrt{(1 - \beta \kappa)^2 + \beta^2 \lambda^2 L_{\omega}^2} + \mu,
    \end{aligned}
\end{equation}
where the first and second inequalities follow from the triangle inequality, and the third inequality follows from \eqref{Bound of H1}. Note that $\mu \in (0, 1 - \sqrt{\lambda^2 L^2_{\omega}/(\kappa^2 + \lambda^2 L^2_{\omega})})$ guarantees the existence of roots of \eqref{Bound of H1}. Combining $\beta \in (\underline{\beta}, \overline{\beta})$ and \eqref{Bound of beta}, we have 
$$
\setlength{\abovedisplayskip}{8pt}   % 上方间距
\setlength{\belowdisplayskip}{5pt}   % 下方间距
    \sqrt{(1 - \beta \kappa)^2 + \beta^2 \lambda^2 L_{\omega}^2} + \mu < 1,
$$
which, together with \eqref{equation11}, implies that $\|\nabla H\| < 1$. That is, $H$ is a contraction for all $\bm{\theta} \in \mathcal{B}_{\bm{\theta^*}}(\rho)$, and \eqref{equation22} holds.

(ii) With the contractivity and Lipschitz continuous differentiability of $H$, applying Theorem \ref{Convergence AA} enables us to establish the local convergence rate.
\end{proof}
% %\vspace{-10pt}
\section{Numerical experiments}
In this section, we demonstrate the performance of Anderson-accelerated nonsmooth optimization algorithms using several well-known examples from signal processing and machine learning. All experiments are implemented in MATLAB 2022a on a 64-bit laptop equipped with an Intel Core i7-1165G7 processor (2.80GHz) and 16GB of RAM. We conduct experiments using both synthetic and real-world datasets, with the initial point $\bm{x}^0$ drawn from a standard Gaussian distribution. For each experiment, we plot the residual norm $\|\bm{r}^k\|$ at each iteration $k$ to validate our theoretical results. Since the additional computational cost of Anderson Acceleration (AA) per iteration is less than 10\%, the runtime plots closely resemble the iteration count plots, we omit the runtime plots in the following experiments.
%\vspace{-0pt}
% \subsection{Synthetic Datasets}
% \subsubsection{Sparse Recovery Problem with PGA}

\subsection{ {PGA for sparse signal recovery}}
% %\vspace{-5pt}
We begin with the classic Lasso problem in signal processing \cite{tibshirani1996regression}:
$$\setlength{\abovedisplayskip}{8pt}   % 上方间距
\setlength{\belowdisplayskip}{5pt}   % 下方间距
\min\limits _{\bm{x}\in \mathbb{R}^N} \:F(\bm{x}):=  \frac{1}{2}\|A\bm{x}-\bm{y}\|^2 +  \lambda \|\bm{x}\|_1,$$
where $A \in \mathbb{R}^{M \times N}$ and $\bm{y} \in \mathbb{R}^{M}$. The objective is to recover a sparse signal $\bm{x}$ from $M$ observations, with $M \ll N$. We compare the performance of classical ISTA, FISTA, and Anderson-accelerated ISTA (AAISTA). The entries of matrix $A$ are generated randomly and independently from a standard Gaussian distribution, and $A$ is orthonormalized along its rows. The true signal $\bm{x}_{\mathop{{\rm true}}}$ is created by randomly selecting $N/10$ elements from an $N$-dimensional zero vector and setting them to ±1. The observation vector is defined as $\bm{y} = A\bm{x}_{\mathop{{\rm true}}} + \bm{\epsilon}$, where $\bm{\epsilon} \in \mathbb{R}^N$ follows a Gaussian distribution with mean 0 and variance $10^{-4}$. We set $\lambda = 0.01$ and $M = 15$. Figure \ref{fig:Lasso-PGA} shows the average performance over 50 random experiments. It can be observed that the AAISTA curve exhibits a significant change toward the end of the iterations, transitioning from a sublinear to a linear, or even superlinear, convergence rate. This behavior suggests that active manifold identification has occurred, significantly accelerating the algorithm. This observation corroborates our theoretical predictions.

%Notably, AAISTA even seems to exhibit superlinear speed, which might occur when the optimal solution is close to the subspace spanned by past iterations.
%\vspace{-0pt}
\begin{figure}[htbp]
\centering
\subfloat[$(200,1000)$]{\label{fig:Lasso-PGA a}\includegraphics[width=0.25\textwidth]{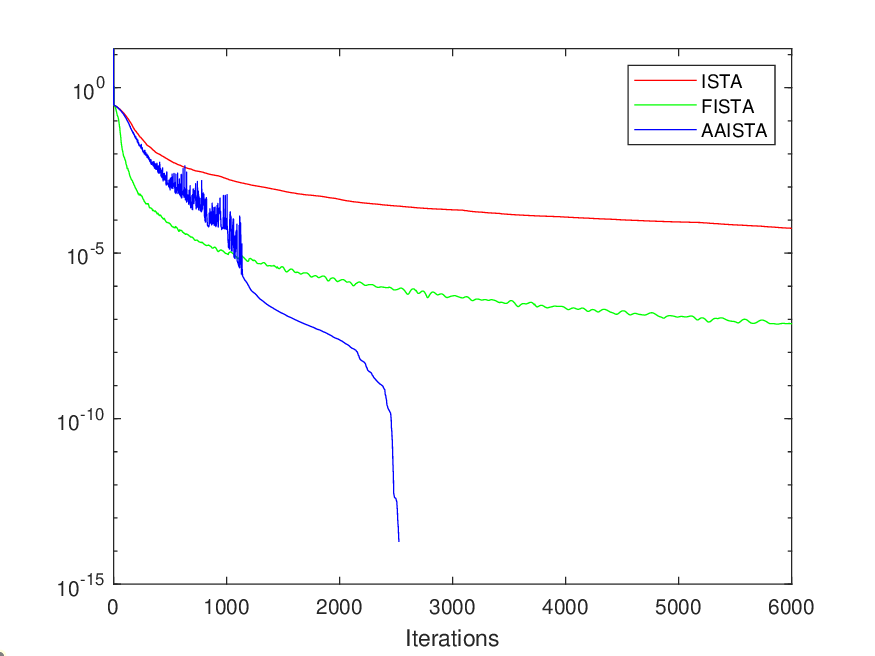}}
\subfloat[$(400,2000)$]{\label{fig:Lasso-PGA b}\includegraphics[width=0.25\textwidth]{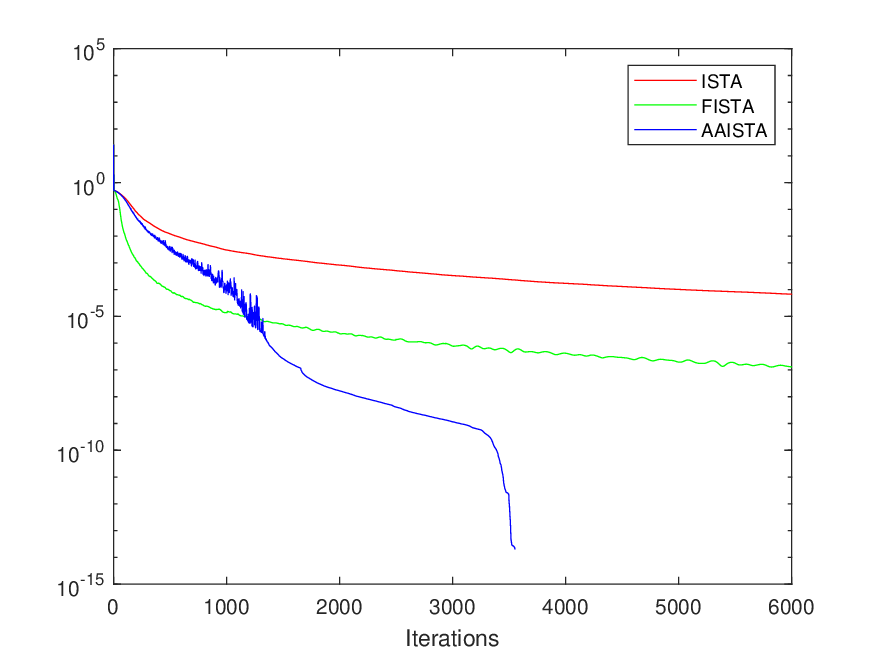}}
\subfloat[$(600,3000)$]{\label{fig:Lasso-PGA C}\includegraphics[width=0.25\textwidth]{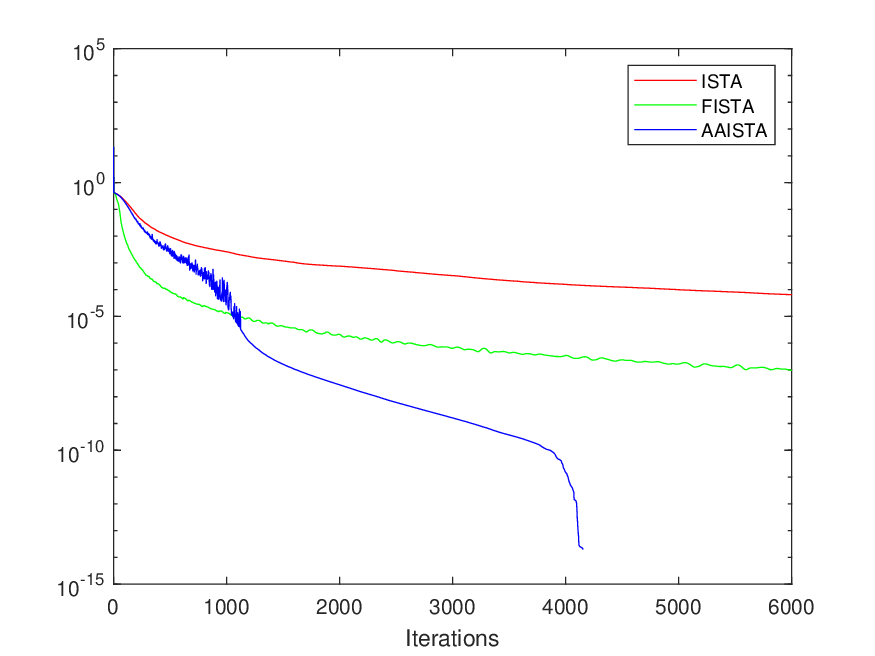}}
%\vspace{-15pt}
\caption{Comparison of $\|\bm{r}^k\|$ for ISTA, FISTA, AAISTA under different $(M,N)$.} 
\label{fig:Lasso-PGA}
%\vspace{-10pt}
\end{figure}
% %\vspace{-10pt}
% \subsection{Real-world datasets}
\subsection{ {PCD, DRS, and IRL1 for classification problems}}
%\vspace{-5pt}
In this section, we compare the performance of the PCD, DRS, and IRL1 algorithms with their respective Anderson-accelerated versions—AAPCD, AADRS, and AAIRL1—across several classification models using five real-world datasets: {\it mushrooms}, {\it colon-cancer}, {\it ijcnn1}, {\it w8a}, and {\it a9a}. These datasets consist of binary classification problems from the LIBSVM repository \cite{chang2011libsvm}. A summary of the datasets is provided in Table \ref{tab:datasets}. Let $\bm{a}_i \in \mathbb{R}^N$ denote the training samples, and $A = [\bm{a}_1, \ldots, \bm{a}_M]^T \in \mathbb{R}^{M \times N}$ represent the matrix of training samples, while $\bm{y} \in \{-1, 1\}^M$ denotes the corresponding labels.
\begin{table}[htbp]
    \renewcommand\arraystretch{1}
    %\scalebox{1.5}{
      \begin{center}
    \begin{tabular}{@{}llllll@{}}
    %\begin{tabular}{@{}lll@{}}
    \hline
    Datasets& mushrooms &  colon-cancer &ijcnn1& a9a & w8a   \\
    \hline
    Samples $M$& 8124 & 62 &49990 & 32561& 49749   \\
    Features $N$&112  & 2000 & 22 & 123 & 300     \\
    \hline
    \end{tabular}
  \end{center}
  %\vspace{-2pt}
    \caption{Characteristics of the datasets} \label{tab:datasets}
    %\vspace{-10pt}
\end{table}
During the experiments, we observed instances of non-convergence when using Anderson Acceleration (abbreviated as AA). To ensure convergence, we incorporated a function/residual descent check to confirm that AA iterates converge to the desired local region. In each experiment, we tested three different memory sizes for AA, $m = 5, 10, 15$, denoted as AA$(m)$. The results indicate that AA demonstrates significant effectiveness, particularly in local regions for real-world tasks. It is also noteworthy that the optimal memory size $m$ may vary depending on the specific problem.
% \vspace{-5pt}
% \subsubsection{Support Vector Machine with PCD}

{\bf  {PCD for support vector machines}} 
In this subsection, we focus on the PCD method for solving the soft margin Support Vector Machine (SVM), which can be tackled through the following dual optimization problem:
$$\setlength{\abovedisplayskip}{8pt}   % 上方间距
\setlength{\belowdisplayskip}{5pt}   % 下方间距
\min\limits _{\bm{x}\in \mathbb{R}^M} \:F(\bm{x}) :=  \frac{1}{2} \|(\bm{y} \odot A)^T \bm{x}\|^2 -\sum^M_{i=1} x_i \quad \text{s.t.} \quad 0 \le \bm{x} \le C,$$
where $\bm{y} \odot A$ denotes the element-wise multiplication between $\bm{y}$ and $A$. This problem can be reformulated in the form \eqref{Problem coordinate descent algorithm} by defining $f(\bm{x}) = F(\bm{x})$ and $g_i(\bm{x}) = \chi_{[0,C]}(x_i)$ for $i = 1, \dots, M$. Soft margin SVM extends the traditional SVM by allowing some misclassifications, with the parameter $C$ controlling the tolerance for misclassification. In our experiments, we set $C = 100$. The PCD update for this problem is given by:
$x_i = \text{prox}_{\beta\chi_{[0,C]}}(x_i - \beta \left( [(\bm{y} \odot A)(\bm{y} \odot A)^T]_{i,:} \bm{x} - 1 \right)),$
where $\beta = 1/L$ and $L = \max_{i=1, \dots, M} \|(\bm{y} \odot A)_{i,:}\|^2$. Given the large number of samples in these datasets, PCD requires a significant number of iterations to compute $x_i$ for each sample. Thus, we randomly selected 2000 samples from each dataset: {\it mushrooms}, {\it ijcnn1}, {\it w8a}, and {\it a9a} for the experiments. Figure \ref{fig:SVM-PCD} compares the performance of AAPCD and the classical PCD algorithm. As shown, the AAPCD curve experiences a sharp decline in the final stages, perfectly aligning with our theoretical predictions. In some challenging experiments, the AA curve closely follows the standard algorithm during the initial iterations, indicating that the AA step is seldom activated early on, leading to an unaccelerated algorithm. This also highlights the global instability of the AA method.

\begin{figure}[htbp]
\centering
\subfloat[mushrooms]{\label{fig:SVM-PCD a}\includegraphics[width=0.25\textwidth]{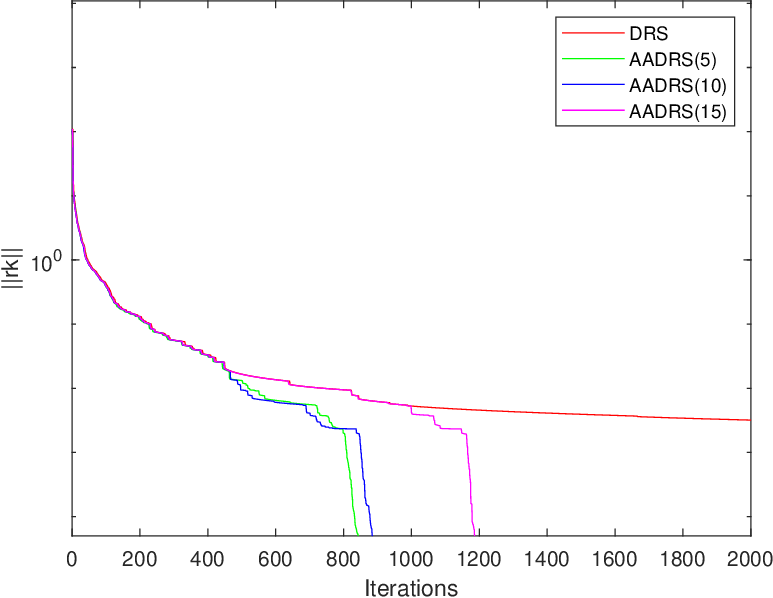}}
\subfloat[colon-cancer]{\label{fig:SVM-PCD b}\includegraphics[width=0.25\textwidth]{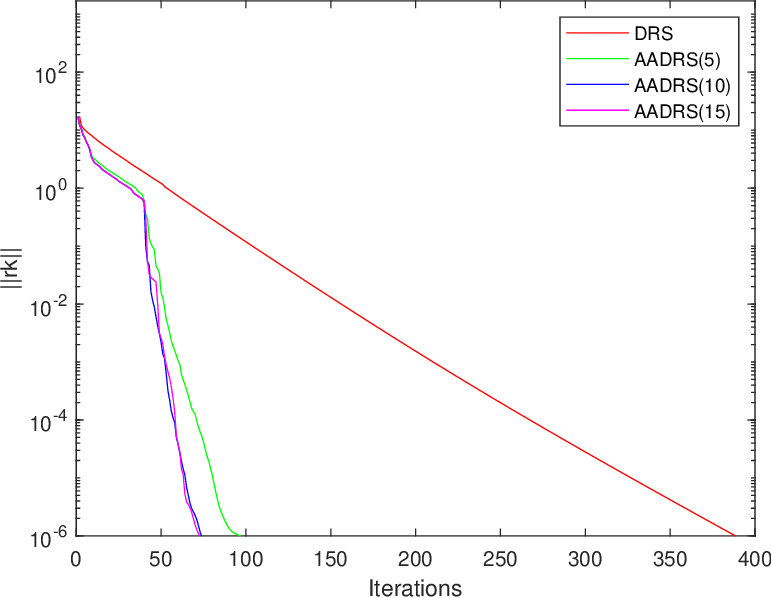}}
\subfloat[ijcnn1]{\label{fig:SVM-PCD C}\includegraphics[width=0.25\textwidth]{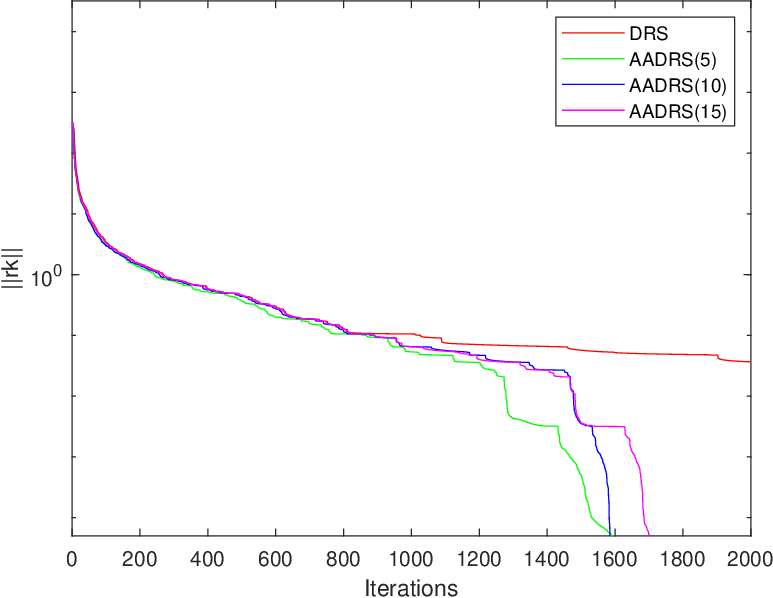}} \\
%\vspace{-0pt}
\subfloat[a9a]{\label{fig:SVM-PCD d}\includegraphics[width=0.25\textwidth]{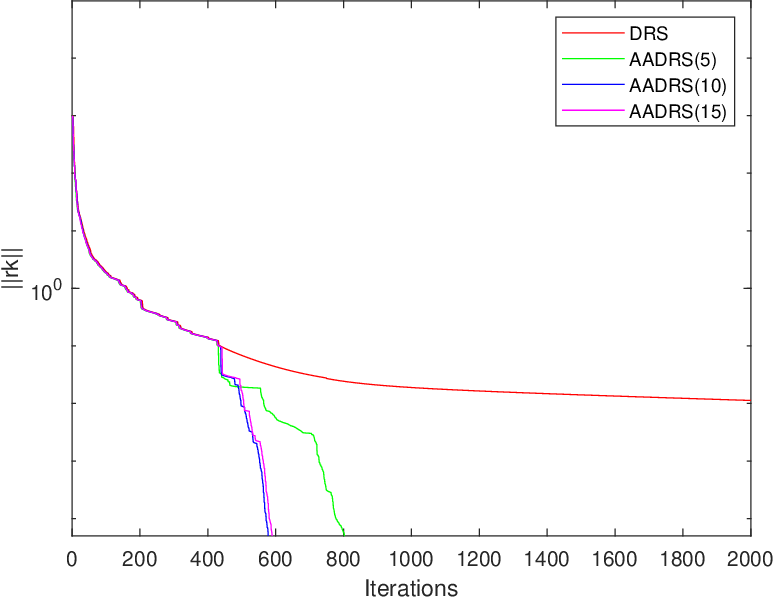}}
\subfloat[w8a]{\label{fig:SVM-PCD e}\includegraphics[width=0.25\textwidth]{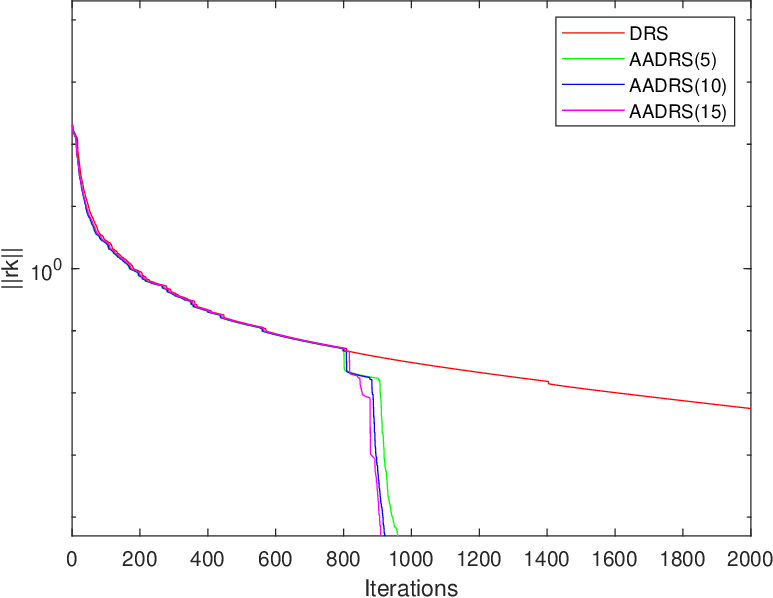}}
%\vspace{-10pt}
\caption{Comparison of $\|\bm{r}^k\|$ for PCD and AAPCD(m).} 
\label{fig:SVM-PCD}
%\vspace{-10pt}
\end{figure}
% \vspace{-10pt}
% \subsubsection{Nonnegative least squares with DRS}
{\bf  {DRS for nonnegative least squares}}
In this section, we explore the DRS method for solving the nonnegative least squares problem:
$$
\setlength{\abovedisplayskip}{8pt}   % 上方间距
\setlength{\belowdisplayskip}{5pt}   % 下方间距
    \min_{\bm{x}\in \mathbb{R}^N} \: F(\bm{x}) := \frac{1}{2M} \|A\bm{x} - \bm{y}\|^2 + \lambda \|\bm{x}\|^2 \quad \text{s.t.} \:\bm{x} \ge 0,
$$
which is a common step in many nonnegative matrix factorization algorithms. This problem can be reformulated as \eqref{Problem:DRS} by defining $f(\bm{x}) = F(\bm{x})$ and $g(\bm{x}) = \chi_{[0,+\infty]}(\bm{x})$. We set $\lambda = 0.001$ and $\beta = 1/L$, where $L = \|A\|^2/M$. The computation of $\text{prox}_{\beta g}$ is straightforward and given by $\max(0, \bm{x})$. For $\text{prox}_{\beta f}$, 
%$$\mbox{prox}_{\beta f}(x) = \arg\min_{t\in\mathbb{R}^N} \|\left[\begin{array}{c}
 %           A \\ \lambda I \\ \frac{1}{\sqrt{2\beta}} I
 %           \end{array}\right] \bm{t} - \left[\begin{array}{c}
%           \bm{y} \\ \bm{0} \\ \frac{1}{\sqrt{2\beta}} \bm{x}
 %           \end{array}\right] \|^2.$$
 it actually corresponds to a least-square problem, which we solve using LSQR, an efficient conjugate gradient method for sparse least-squares problems.
Figure \ref{fig:NLS-DRS} shows the residual curves from our experiments. AA demonstrates significant acceleration. For instance, in Figure \ref{fig:NLS-DRS}(c), AA quickly identifies a high-precision solution, especially when handling ill-conditioned datasets.

\begin{figure}[htbp]
\centering
\subfloat[mushrooms]{\label{fig:NLS-DRS a}\includegraphics[width=0.25\textwidth]{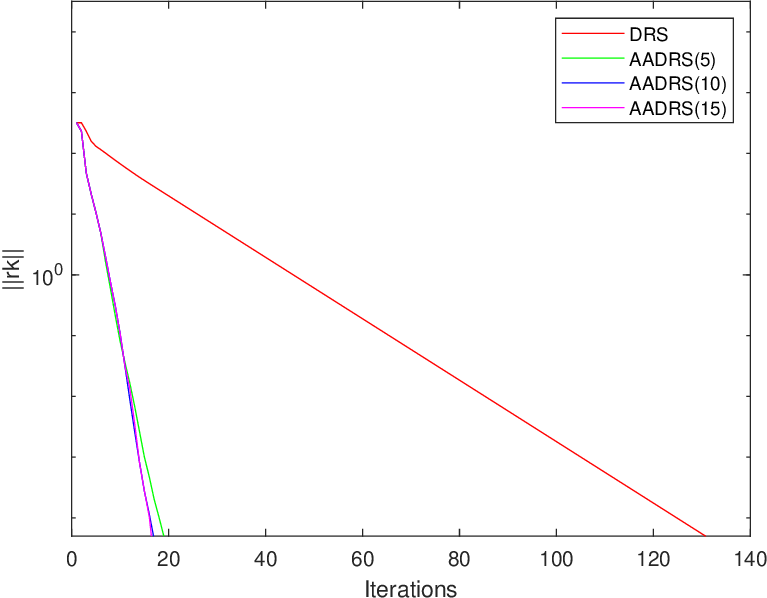}}
\subfloat[colon-cancer]{\label{fig:NLS-DRS b}\includegraphics[width=0.25\textwidth]{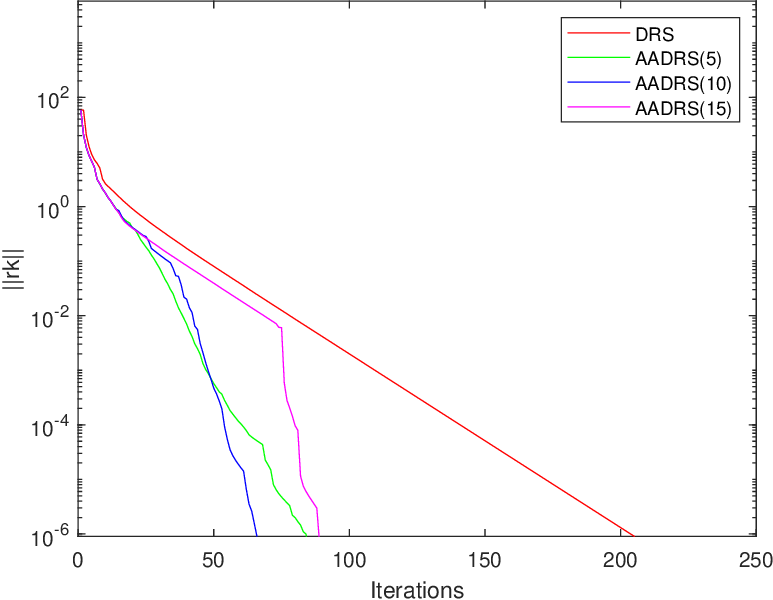}}
\subfloat[ijcnn1]{\label{fig:NLS-DRS C}\includegraphics[width=0.25\textwidth]{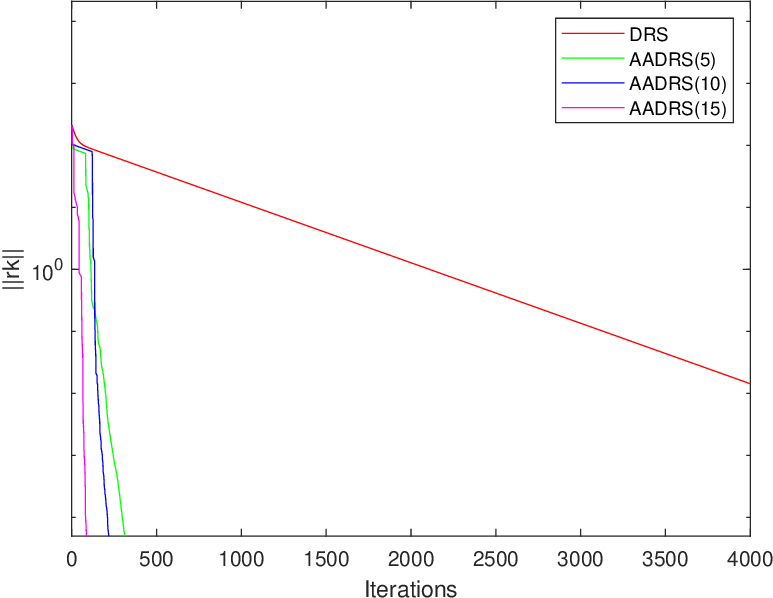}} \\
%\vspace{-10pt}
\subfloat[a9a]{\label{fig:NLS-DRS d}\includegraphics[width=0.25\textwidth]{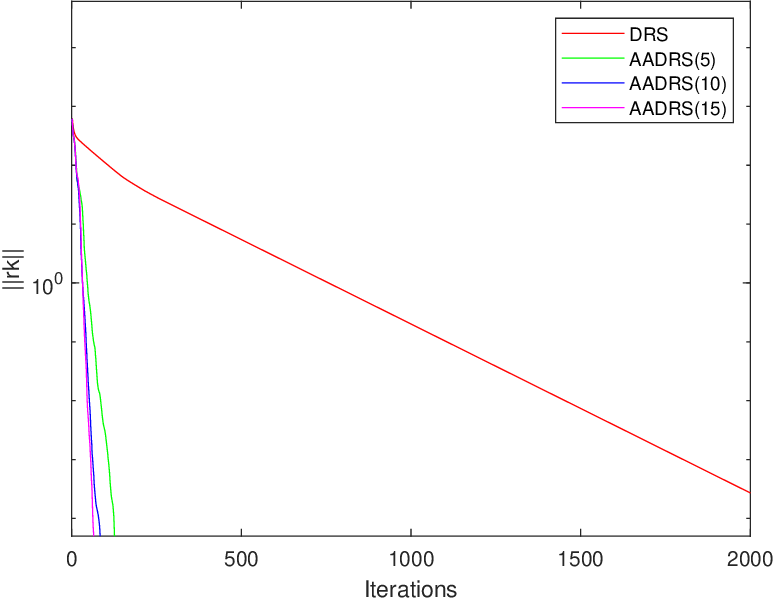}}
\subfloat[w8a]{\label{fig:NLS-DRS e}\includegraphics[width=0.25\textwidth]{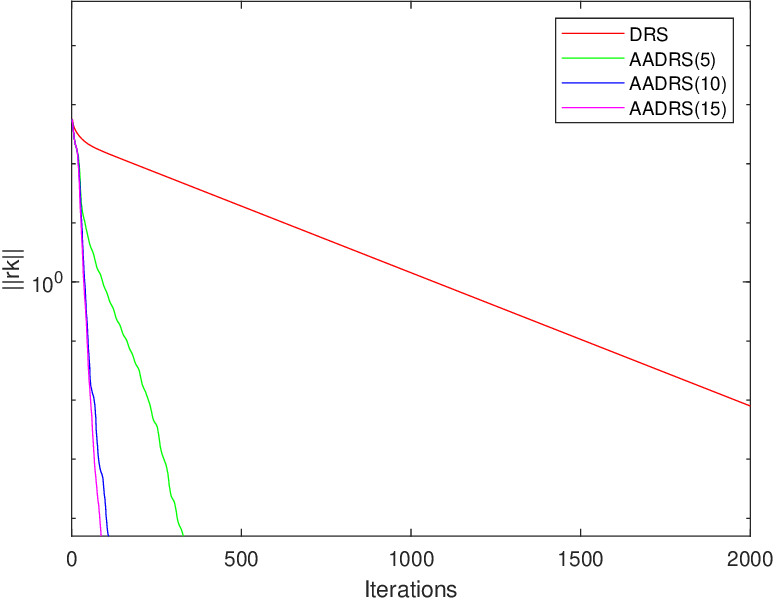}}
%\vspace{-15pt}
\caption{ {Comparison of $\|\bm{r}^k\|$ for DRS and AADRS(m).}} 
\label{fig:NLS-DRS}
%\vspace{-10pt}
\end{figure}
%\vspace{-5pt}

% \subsubsection{Sparse Logistic Regression with IRL1}
{\bf  {IRL1 for sparse logistic regression}}
In this subsection, we report the results of using the IRL1 method to solve the sparse logistic regression problem:
$$\setlength{\abovedisplayskip}{8pt}   % 上方间距
\setlength{\belowdisplayskip}{5pt}   % 下方间距
\min\limits_{\bm{x}\in \mathbb{R}^N} \: F(\bm{x}) :=  \frac{1}{M} \sum^M_{i=1} \log(1 + \exp(-y_i \bm{a}_i^T \bm{x})) +  \lambda \|\bm{x}\|_p^p,$$
where we set $p = 0.75$, $\lambda = 0.001$, $\bm{\epsilon}^0 = \bm{1}$, $\mu = 0.9$, and $\beta = 1/L$, with $L = \|A\|^2/4M$. The results are presented in Figure \ref{fig:SLR-IRL1}. Anderson Acceleration provides a speedup of approximately 5-10 times, with most of the improvement occurring in the final stages of descent. In Figures \ref{fig:SLR-IRL1} (a) and (e), it appears that AA identifies the solution in a finite number of steps, possibly because the solution lies in the subspace spanned by previous iterations.

\begin{figure}[htbp]
\centering
\subfloat[mushrooms]{\label{fig:SLR-IRL1 a}\includegraphics[width=0.25\textwidth]{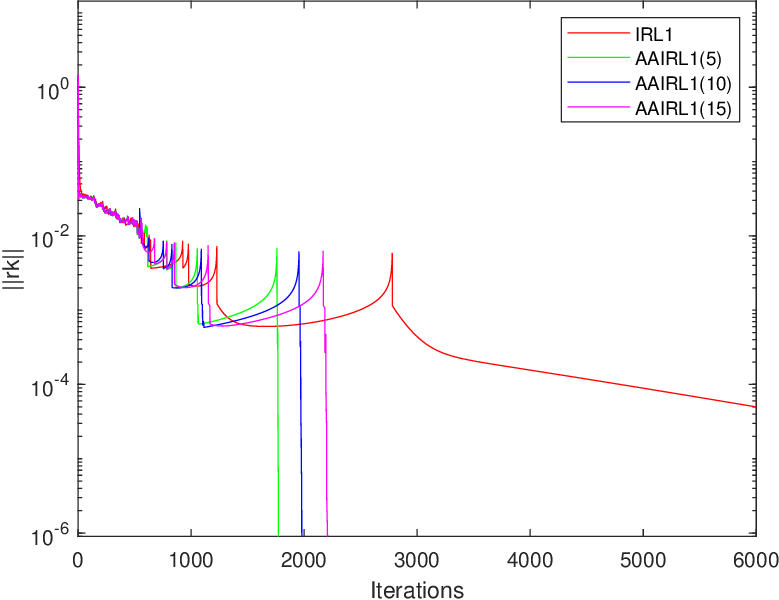}}
\subfloat[colon-cancer]{\label{fig:SLR-IRL1 b}\includegraphics[width=0.25\textwidth]{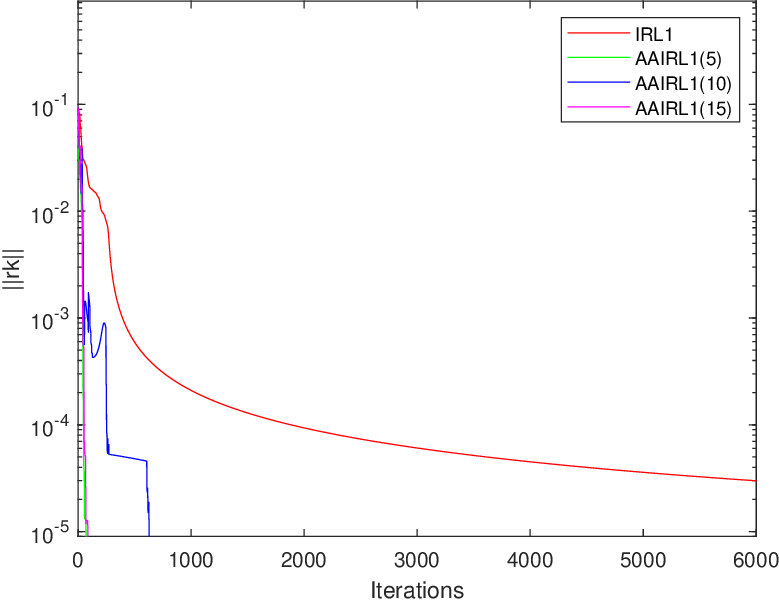}}
\subfloat[ijcnn1]{\label{fig:SLR-IRL1 C}\includegraphics[width=0.25\textwidth]{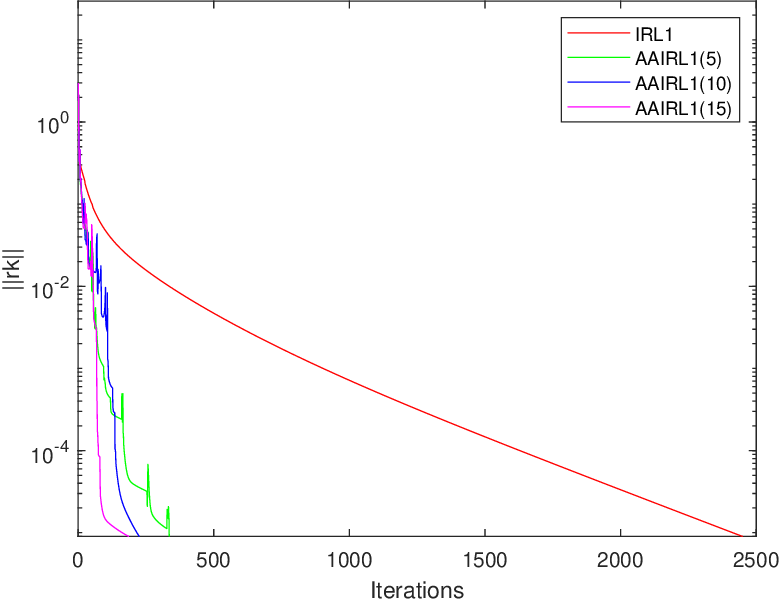}} \\
%\vspace{-10pt}
\subfloat[a9a]{\label{fig:SLR-IRL1 d}\includegraphics[width=0.25\textwidth]{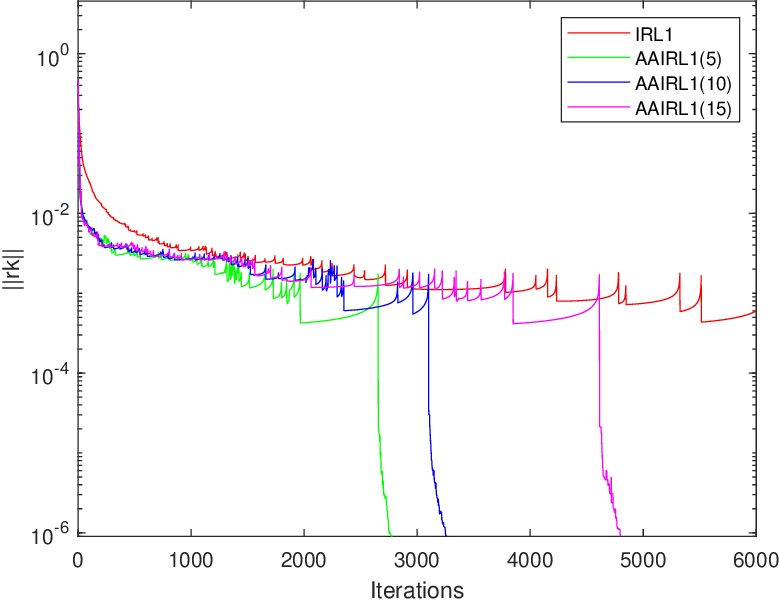}}
\subfloat[w8a]{\label{fig:SLR-IRL1 e}\includegraphics[width=0.25\textwidth]{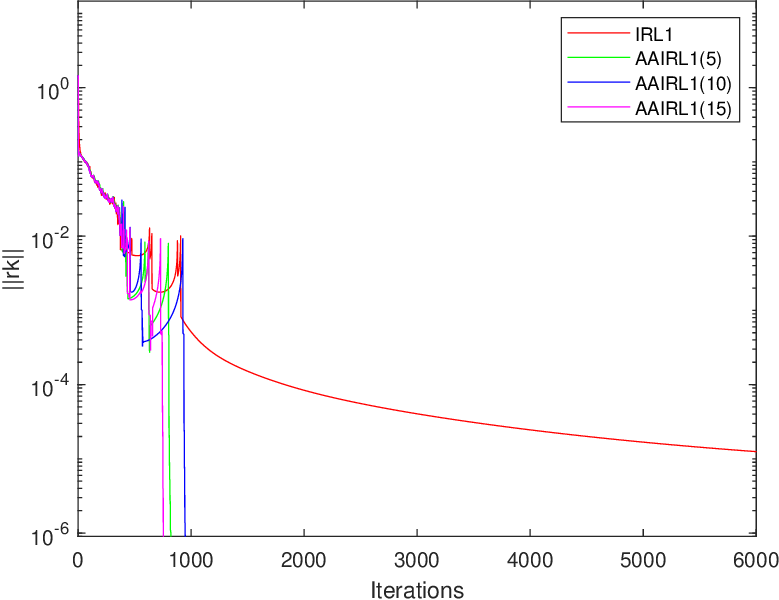}}
%\vspace{-15pt}
\caption{Comparison of $\|\bm{r}^k\|$ for IRL1 and AAIRL1(m).} 
\label{fig:SLR-IRL1}
%\vspace{-10pt}
\end{figure}

%\vspace{-3pt}
\section{Conclusions}
In this paper, we have analyzed the local convergence rate of Anderson acceleration for nonsmooth optimization algorithms that exhibit the active manifold identification property. These algorithms include proximal point, proximal gradient, proximal linear, proximal coordinate descent, Douglas-Rachford splitting/alternating direction method of multipliers and iteratively reweighted $\ell_1$ algorithms, covering a broad range of nonsmooth optimization problems. Our work fills a gap in the convergence rate analysis of Anderson-accelerated nonsmooth optimization algorithms and provides a rigorous explanation for the effectiveness of Anderson acceleration when applied to these methods.
\bibliographystyle{abbrv}
\bibliography{refs}
\end{document}